\newtheorem{thm}{\bf Theorem}
\newtheorem{lemma}{Lemma}[section]
\newtheorem{definition}{Definition}[section]
\newtheorem{remark}{Remark}[section]
\newtheorem{proposition}{Proposition}[section]
\newenvironment{theorem}{\begin{thm}} {\end{thm}}
\newtheorem{cor}{\bf Corollary}
\theoremstyle{remark}
\theoremstyle{definition}
\numberwithin{table}{section}
\begin{document}

\begin{frontmatter}



\title{A divergence-free projection method for quasiperiodic photonic crystals in three dimensions} 


\author[label1]{Zixuan Gao} 
\author[label1]{Zhenli Xu}
\author[label1]{Zhiguo Yang} 

\address{School of Mathematical Sciences, MOE-LSC and CMA-Shanghai, Shanghai Jiao Tong University, Shanghai 200240, China}
            

\begin{abstract}
This paper presents a point-wise divergence-free projection method for numerical approximations of photonic quasicrystals problems. The original three-dimensional quasiperiodic Maxwell's system is transformed into a periodic one in higher dimensions through a variable substitution involving the projection matrix, such that periodic boundary condition can be readily applied. To deal with the intrinsic divergence-free constraint of the Maxwell's equations, we present a quasiperiodic de Rham complex and its associated commuting diagram, based on which a point-wise divergence-free quasiperiodic Fourier spectral basis is proposed. With the help of this basis, we then propose an efficient solution algorithm for the quasiperiodic source problem and conduct its rigorous error estimate.  Moreover, by analyzing the decay rate of the Fourier coefficients of the eigenfunctions, we further propose a divergence-free reduced projection method for the quasiperiodic Maxwell eigenvalue problem, which significantly alleviates the computational cost. Several numerical experiments are presented to validate the efficiency and accuracy of the proposed method. 
\end{abstract}








\begin{keyword}
photonic quasicrystals \sep Maxwell's equations \sep divergence-free condition \sep projection method \sep spectral method

\MSC 65N35 \sep 65N22 \sep 65F05 \sep 35J05

\end{keyword}

\end{frontmatter}



\section{Introduction}
Photonic quasicrystals, which differ fundamentally from periodic lattices in terms of symmetry and order, have led to numerous novel discoveries and applications in physics \cite{rodriguez2008computation,chan1998photonic,zoorob2000complete,dal2003light}. By manipulating the rotation angles and lattice ratios of the quasicrystal, novel localized phenomena as well as the transition between localized and delocalized states can be observed \cite{fu2020optical, gao2023pythagoras,wang2020localization}. These peculiar properties make photonic quasicrystals promising materials, which allow for manipulating optoelectronic properties at will. To predict these properties, it is highly demanding to develop efficient and accurate numerical methods for photonic quasicrystal problems.

The problem explored in this paper is a three-dimensional photonic quasicrystal governed by the Maxwell's equations with quasiperiodic electric permittivity. Despite the seemingly simple form of this system, it is quite challenging numerically. The numerical difficulties can be attributed to two primary issues. The first issue is the unboundedness of the computational domain caused by the quasiperiodic condition. The second issue is the divergence-free condition inherent in the Maxwell's equations. To tackle these obstacles, it is crucial to construct efficient approximation schemes that ensure the intrinsic divergence-free constraint under quasiperiodic condition.



There has been a long standing interest in developing numerical algorithms for quasiperiodic problems. Unlike periodic boundary conditions, quasiperiodic boundary conditions lack decay and translation invariance, which significantly affects the accuracy of direct reduction methods applied to the computational domain. One widely employed method for quasiperiodic problems is the supercell approach \cite{davenport1946simultaneous, goldman1993quasicrystals, steurer1999crystallography}, which is often combined with planewave method \cite{della2006comparative, kaliteevski2000two} or the finite difference time-domain (FDTD) method \cite{kim2005photonic, gauthier2006fdtd}. Despite its popularity, the supercell method exhibits slow convergence and substantial computational costs, especially when dealing with large supercells. Additionally, the simultaneous Diophantine approximation error fails to uniformly decay as the supercell size increases. An alternative approach,  commonly referred to as the projection method (PM)\cite{rodriguez2008computation, jiang2014numerical, jiang2018numerical}, solves the quasiperiodic system by transforming it into a periodic problem in a higher-dimensional lattice. The PM uniformly samples every possible supercell of the infinite quasicrystal, eliminating the need to capture the entire quasiperiodic structure and enabling the use of periodic boundary conditions for domain truncation. Currently, the PM is only limited to the solution of scalar equations such as the Schr\"odinger equation. For quasiperiodic Maxwell's equations, the classical PM fails to work as it cannot avoid spurious eigenmodes resulting from violating the Gauss's law. 
To the best of the authors’ knowledge, there are no available numerical methods which both avoid the Diophantine error and preserve the divergence-free constraint for photonic quasicrystals.

In this study, our focus is on the development of a novel algorithm designed for quasicrystals in three dimensions, which can overcome these numerical challenges. The main contributions of this paper are threefold. Firstly, we derive novel divergence-free Fourier approximaion bases using the quasiperiodic de Rham complex for quasicrystals in three dimensions. Secondly, we propose the projection method based on the divergence-free basis (DF-PM) for quasiperiodic curl-curl problems and conduct a rigorous error estimate to justify the spectral accuracy of it. A highly efficient solution algorithm is then proposed. Thirdly, we propose a novel reduction strategy for the Maxwell eigenvalue problem. Theorem \ref{thm3.3} shows the decay rate of the divergence-free Fourier coefficients. Thanks to this property, we propose the reduced projection method based on the divergence-free basis (DF-RPM) for the quasiperiodic Maxwell eigenvalue problem, as the idea of the RPM \cite{gao2023reduced}. The degree of freedom (DOF) decreases significantly and it is possible to use more bases for higher accuracy. These presented divergence-free projection methods and corresponding
fast solution algorithms are very competitive and computationally attractive.

The rest of this paper is organized as follows. Section \ref{s2} presents the preliminaries on quasiperiodic functions. Section \ref{s3} proposes the DF-PM, by introducing the quasiperiodic de Rham complex, the divergence-free basis space and the projection method applying on the quasiperiodic source problems. It also analyzes the method in detail and proves the error estimation. Section \ref{s4} proposes the DF-RPM, by proving the decay rate of the divergence-free Fourier coefficients and the projection method applying on the quasiperiodic Maxwell eigenvalue problems. Section \ref{s5} demonstrates the numerical results of the divergence-free projection methods for solving quasiperiodic source and eigenvalue problems in three dimensions. Section \ref{s6} concludes the discussions with some closing remarks.

\section{Preliminaries on quasiperiodic functions}\label{s2}
\subsection*{Notation and convention} Let us denote $\mathbb{R}$, $\mathbb{Q}$ and $\mathbb{Z}$ be the sets of real, rational and integer numbers, respectively.  We adopt italic letters for numbers or scalar functions, bold italics for vectors or vector functions, and bold for matrices, respectively.  Let $\Omega$ be a generic domain of interest,  $L^2(\Omega)$ be the space of square-integral  functions  with the inner product and norm denoted by $(\cdot, \cdot)$ and $\| \cdot \|$ as usual, and $H^m(\Omega)$ with $m\geq 0$ be the Sobolev spaces as defined in classic textbooks (see e.g. \cite{adams2003sobolev}). Denote the differential operators  $\nabla \times$ and $\nabla \cdot$ respectively as the usual curl and divergence operators.  Then the $H({\rm curl},\Omega)$ and $H({\rm div},\Omega)$ spaces are respectively defined by
    \begin{equation*}
    H({\rm curl},\Omega)=\big\{\bm v\in L^2(\Omega)^3\big| \nabla\times\bm v\in L^2(\Omega)^3 \big\},\quad  H({\rm div},\Omega)=\big\{\bm v\in L^2(\Omega)^3\big| \nabla\cdot\bm v\in L^2(\Omega) \big\}.
    \end{equation*} 
   
   
    Since there would be frequent conversions between periodic and quasiperiodic functions, we use the subscripts ``per" and ``qp" to differentiate the spaces they belong to, as well as their associated inner products and norms. For instance, given a $d$-dimensional periodic function $F(\bm{z}) \in L^2_{\rm per}([0,T]^d)$ with period $T$ in each dimension (referred to as a ``$T$-periodic function"),  its corresponding inner product and norm are denoted by:
    \begin{equation*}
        (F,G)_{\rm per}=\dfrac{1}{T^d}\int_{[0,T]^d}F\Bar{G}{d}\bm{z},\quad \| F\|_{\rm per}=\sqrt{(F,F)_{\rm per}},
    \end{equation*}
    where $\overline{G}$ represents the complex conjugate of $G \in L^2_{\rm per}([0,T]^d)$.

\subsection*{Quasiperiodic functions}     
    To facilitate the development of efficient and accurate numerical methods for quasiperiodic Maxwell source and eigenvalue problems, we provide a brief exposition of the definition of quasiperiodic functions and their relevant properties (see monographs \cite{bohr2018almost,levitan1982almost} for comprehensive discussions).

    \begin{definition}\label{df1}
        A $d$-dimensional function $f(\bm{z}), \;\bm{z}\in \mathbb{R}^d$ is quasiperiodic if there exists a $d\times n$ projection matrix $\mathbf{P}$ such that $F(\bm{x})=F(\mathbf{P}^\intercal\bm{z})=f(\bm{z})$ is an $n$-dimensional periodic function, where all columns of $\mathbf P$ are linearly independent over $\mathbb{Q}$. $F(\bm x)$ is called the parent function of $f(\bm z)$ with respect to $\mathbf P$. 
    \end{definition}

    \begin{remark}
        It is important to note that the projection matrix $\mathbf{P}$ is not unique. The projection matrix of the quasiperiodic function $f(x)=\cos(x)+\cos(\sqrt{2}x)$ can be chosen as $\mathbf{P}=\big[1,\sqrt{2}\big]$ or $\big[\sqrt{2},1 \big]$. Throughout the paper, $\mathbf{P}$ is chosen such that $F(\bm{x})$ is $2\pi$-periodic and we prescribe $\mathbb T^n=[0,2\pi]^n$ hereafter. 
    \end{remark}

We define the mean value of a $d$-dimensional quasiperiodic function $f(\bm{z})$ as
    \begin{equation}
        m(f)=\lim_{L\rightarrow\infty}\dfrac{1}{|L|^d}\int_{K}f(\bm{z}){d}\bm{z},
    \end{equation}
    where $K=\{\bm{z}\,| \,0\leq |\bm{z}_i|\leq L,i=1,\dots,d\}$. Correspondingly, the square-integral quasiperiodic function space $L^2_{\rm qp}(\mathbb{R}^n)$ is defined as
    \begin{equation}
L^2_{\rm qp}(\mathbb{R}^n):=\{f(\bm z) \,|\;m(f\bar{f})<\infty   \},
    \end{equation}
with the inner product and norm given by
\begin{equation}
(f,g)_{\rm qp}=m(f\bar{g}),\quad \|f \|_{\rm qp}=\sqrt{  (f,f)_{\rm qp}   }\,,
\end{equation}    
where $f$ and $g$ are quasiperiodic functions with respect to the same projection matrix $\mathbf P$. 

It is well-known that $\{ e^{\mathrm{i}\langle\bm{k},\bm{x}\rangle} \}_{ \bm{k}\in\mathbb{Z}^n}$ serves as a complete orthonormal basis for  $L^2_{\rm per}(\mathbb T^n)$ such that for any $F(\bm{x})\in L^2_{\rm per}(\mathbb T^n)$, its has the Fourier series
\begin{equation}\label{dfeq1}
F(\bm{x})=\sum_{\bm{k}\in\mathbb{Z}^n}F_{\bm{k}}e^{\mathrm{i}\langle\bm{k},\bm{x}\rangle},\quad     \hat{F}_{\bm{k}}=\dfrac{1}{(2\pi)^n}\int_{\mathbb T^n}F(\bm{x})e^{-\mathrm{i}\langle\bm{k},\bm{x}\rangle}d\bm{x},
\end{equation}
and there holds the Parseval's equality $ \|F \|^2=\sum_{\bm k\in \mathbb{Z}^n} |F_{\bm{k}}|^2$. Lemma \ref{lm: decay} relates the decay rate of Fourier coefficients with the regularity of a function (see \cite[p.196]{grafakos2008classical}).
\begin{lemma}\label{lm: decay}
Let $m\in\mathbb Z, \;m>0$, suppose $F(\bm{x})\in H^m_{\rm per}(\mathbb [0,T]^n)$, then
\begin{equation}\label{decay}
    |\hat{F}_{\bm k}|\leq (\sqrt{n})^{{m}}T^{-n}|\bm k|^{-m}|F|_{m,\rm per},\quad |F|_{m,\rm per}^2=\sum_{\bm k\in\mathbb Z^d}|\bm k|^{2m}|\hat{F}_{\bm k}|^2.
\end{equation}
\end{lemma}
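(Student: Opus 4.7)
The plan is to prove this classical Fourier-decay lemma via the standard integration-by-parts argument for Fourier coefficients, combined with a pigeonhole step that concentrates all $m$ derivatives on a single coordinate direction, thereby producing the prefactor $(\sqrt{n})^m$.

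First I would handle the frequency direction. For each $\bm k \in \mathbb{Z}^n \setminus \{0\}$, I select the coordinate index $j \in \{1,\dots,n\}$ realizing $|k_j| = \max_i |k_i|$. Because $|\bm k|^2 = \sum_i k_i^2 \leq n\,k_j^2$, this delivers the key inequality $|k_j|^{-m} \leq (\sqrt{n})^m |\bm k|^{-m}$, which will ultimately account for the $(\sqrt n)^m$ in the claimed bound.

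Next, starting from the Fourier coefficient formula in \eqref{dfeq1}, I would integrate by parts $m$ times with respect to $x_j$. Periodicity on $\mathbb{T}^n$ annihilates all boundary contributions, so each integration by parts simply extracts a factor $i k_j$, yielding the identity $\widehat{\partial_{x_j}^m F}_{\bm k} = (i k_j)^m \hat F_{\bm k}$. Taking moduli and combining with the pigeonhole step gives
\begin{equation*}
|\hat F_{\bm k}| \;\leq\; (\sqrt{n})^{m}\,|\bm k|^{-m}\,\bigl|\widehat{\partial_{x_j}^m F}_{\bm k}\bigr|.
\end{equation*}

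Finally, I would bound the single Fourier coefficient $\bigl|\widehat{\partial_{x_j}^m F}_{\bm k}\bigr|$ by estimating its defining integral with Cauchy--Schwarz on $[0,T]^n$ against the unimodular exponential $e^{-\mathrm{i}\langle\bm k,\bm x\rangle}$, and then invoking Parseval together with the Fourier characterization $|F|^2_{m,\rm per} = \sum_{\bm k'}|\bm k'|^{2m}|\hat F_{\bm k'}|^2$ given in \eqref{decay} to control $\|\partial_{x_j}^m F\|_{L^2}$ by $|F|_{m,\rm per}$; the residual $T^{-n}$ prefactor then emerges from the normalization built into the Fourier coefficient formula. The main obstacle I anticipate is purely bookkeeping---namely reconciling the $(2\pi)^n$-normalized convention of \eqref{dfeq1} with the general $[0,T]^n$ domain of the lemma statement so that the constant lands exactly as $(\sqrt{n})^{m} T^{-n}$; the analytic content is otherwise entirely elementary.
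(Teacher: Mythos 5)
The paper offers no proof of this lemma; it is quoted as a classical fact with a citation to Grafakos, so there is no in-paper argument to compare against. Your strategy --- pick the coordinate $j$ with $|k_j|=\max_i|k_i|$ so that $|k_j|^{-m}\le(\sqrt n)^m|\bm k|^{-m}$, integrate by parts $m$ times in $x_j$ to get $\widehat{\partial_{x_j}^mF}_{\bm k}=(\mathrm ik_j)^m\hat F_{\bm k}$, and then control the remaining coefficient by Cauchy--Schwarz and Parseval --- is the standard textbook argument and does yield a correct decay estimate. Note, though, that with the seminorm defined spectrally as $|F|_{m,\rm per}^2=\sum_{\bm k'}|\bm k'|^{2m}|\hat F_{\bm k'}|^2$ (as in \eqref{decay}), the entire apparatus is unnecessary: the single term $|\bm k|^{2m}|\hat F_{\bm k}|^2$ is at most the full sum, so $|\hat F_{\bm k}|\le|\bm k|^{-m}|F|_{m,\rm per}$ holds immediately and is already stronger than what you are aiming for.

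The genuine gap is precisely the step you defer to ``bookkeeping'': the factor $T^{-n}$ does not emerge. Tracing your own normalizations, Cauchy--Schwarz gives $|\widehat{\partial_{x_j}^mF}_{\bm k}|\le T^{-n}\,\|\partial_{x_j}^mF\|_{L^2([0,T]^n)}\,\|e^{-\mathrm i\langle\bm k,\cdot\rangle}\|_{L^2([0,T]^n)}=T^{-n/2}\|\partial_{x_j}^mF\|_{L^2}$, while the unnormalized Parseval identity gives $\|\partial_{x_j}^mF\|_{L^2}^2=T^{n}\sum_{\bm k'}|k'_j|^{2m}|\hat F_{\bm k'}|^2\le T^{n}|F|_{m,\rm per}^2$; the powers of $T$ cancel exactly, leaving $|\hat F_{\bm k}|\le(\sqrt n)^{m}|\bm k|^{-m}|F|_{m,\rm per}$ with no $T^{-n}$. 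In fact the inequality as printed cannot be proved because it is false: for $F(\bm x)=e^{\mathrm i\langle\bm k_0,\bm x\rangle}$ one has $|\hat F_{\bm k_0}|=1$ and $|F|_{m,\rm per}=|\bm k_0|^{m}$, so \eqref{decay} would force $1\le(\sqrt n)^{m}T^{-n}$, which fails for $T=2\pi$ whenever $(\sqrt n)^{m}<(2\pi)^{n}$ (e.g.\ $n=6$, $m=1$, the setting actually used in the paper's examples). So your analytic outline is sound and proves the corrected statement with the $T^{-n}$ removed, but the promise to make the constant ``land exactly as $(\sqrt n)^{m}T^{-n}$'' cannot be kept; that factor is an error in the lemma as stated, not a bookkeeping detail.
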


Quasiperiodic function also has the generalized Fourier series and Parseval's equality. One readily verifies that the set $\big\{e^{\mathrm{i} \langle \bm{q},\bm{z} \rangle} \big \}_{\bm{q}\in \mathbb{R}^d}$  forms a normalized orthogonal system as
\begin{equation}\label{eq: qporth}
\big(e^{\mathrm{i}  \langle \bm{q}_1,\bm{z} \rangle        },e^{\mathrm{i}\langle \bm{q}_2,\bm{z} \rangle} \big)_{\rm qp}=\delta_{\bm{q}_1,\bm{q}_2},\quad \bm{q}_1,\bm{q}_2\in \mathbb{R}^d,
\end{equation}
where $ \delta_{\bm{q}_1,\bm{q}_2}$ is the Dirac delta function.  Thus, one can define the Fourier transform of the quasiperiodic functions, also called the Fourier-Bohr transformation (see \cite{bohr2018almost}), as 
\begin{equation}
    \mathscr{F}_{\rm qp}\{f\}(\bm{q})=m\big(f(\bm z)e^{-\mathrm{i}\langle\bm q,\bm z\rangle}\big),
\end{equation}
Correspondingly, one has the generalized Fourier series of the quasiperiodic function and the Parseval's equality in Lemma \ref{thm2}. 
\begin{lemma}\label{thm2}{(see \cite{bohr2018almost})}
    Any $d$-dimensional quasiperiodic function $f(\bm{z})$ has generalized Fourier series
    \begin{equation}\label{gfs}
        f(\bm{z})= \sum_{\bm q\in\mathbb Z[{\rm col}(\mathbf{P})]}\hat{f}_{\bm q}e^{\mathrm{i}  \langle \bm{q},\bm{z} \rangle},\quad  \hat{f}_{\bm q}=(f,e^{\mathrm{i}  \langle \bm{q},\bm{z} \rangle})_{\rm qp},
    \end{equation}
    where $\hat{f}_{\bm q}$ is called the $\bm q$th generalized Fourier coefficient of $f(\bm z)$ and $ \mathbb Z[{\rm col}(\mathbf{P})]:=\{\bm q \, | \,\bm q=\mathbf{P}\bm k,\;\; \bm k\in  \mathbb{Z}^n     \}$.
     In addition, there holds the Parseval's equality:
      \begin{equation}\label{pareq}
        \|f\|^2_{\rm qp}=\sum_{\bm q\in\mathbb Z[{\rm col}(\mathbf{P})]}|\hat{f}_{\bm q}|^2.
    \end{equation}
\end{lemma}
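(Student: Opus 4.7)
The plan is to reduce the lemma to the standard periodic Fourier theory for the parent function via the bridging identity
$m(h) = (2\pi)^{-n}\int_{\mathbb T^n} H(\bm x)\,d\bm x$,
valid for every $h(\bm z) = H(\mathbf P^\intercal \bm z)$ with $H \in L^2_{\rm per}(\mathbb T^n)$. Since $f\in L^2_{\rm qp}(\mathbb R^d)$ possesses a $2\pi$-periodic parent function $F(\bm x)$ by Definition \ref{df1}, I would first apply \eqref{dfeq1} to expand $F(\bm x) = \sum_{\bm k \in \mathbb Z^n} \hat F_{\bm k}\,e^{\mathrm{i}\langle \bm k,\bm x\rangle}$ and substitute $\bm x = \mathbf P^\intercal \bm z$ to obtain $f(\bm z) = \sum_{\bm k \in \mathbb Z^n} \hat F_{\bm k}\,e^{\mathrm{i}\langle \mathbf P \bm k,\,\bm z\rangle}$. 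The rational linear independence of the columns of $\mathbf P$ then renders $\bm k \mapsto \mathbf P\bm k$ a bijection from $\mathbb Z^n$ onto $\mathbb Z[{\rm col}(\mathbf P)]$, so re-indexing by $\bm q = \mathbf P \bm k$ recasts the series exactly in the form \eqref{gfs} with the definition $\hat f_{\bm q} := \hat F_{\bm k}$.

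Next, to verify that this coefficient coincides with $(f,e^{\mathrm{i}\langle \bm q,\bm z\rangle})_{\rm qp}$, I would apply the bridging identity to $H(\bm x) = F(\bm x)\,\overline{e^{\mathrm{i}\langle \bm k,\bm x\rangle}}$; the mean-value inner product collapses to the ordinary periodic Fourier integral over $\mathbb T^n$, yielding $(f,e^{\mathrm{i}\langle \mathbf P \bm k,\bm z\rangle})_{\rm qp} = \hat F_{\bm k}$. Alternatively, one can proceed termwise using the orthonormality relation \eqref{eq: qporth}, since distinct $\bm q \in \mathbb Z[{\rm col}(\mathbf P)]$ give orthogonal exponentials in the quasiperiodic inner product. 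The Parseval equality \eqref{pareq} is then obtained by the same device: applying the bridging identity to $|F|^2$ gives $\|f\|_{\rm qp}^2 = \|F\|_{\rm per}^2$, and the classical Parseval for $F$ combined with the re-indexing converts the right-hand side into $\sum_{\bm q \in \mathbb Z[{\rm col}(\mathbf P)]} |\hat f_{\bm q}|^2$.

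The only genuinely nontrivial ingredient is the bridging identity, which is essentially Weyl's equidistribution theorem: because the columns of $\mathbf P$ are $\mathbb Q$-linearly independent, the orbit $\{\mathbf P^\intercal \bm z \bmod 2\pi\}$ equidistributes on $\mathbb T^n$ as the cube $K$ in $\bm z$ expands. I would establish it first for pure exponentials $H(\bm x) = e^{\mathrm{i}\langle \bm j,\bm x\rangle}$ by directly computing $|K|^{-1}\int_K e^{\mathrm{i}\langle \mathbf P \bm j,\bm z\rangle}\,d\bm z$, whose oscillatory boundary terms vanish in the Cesàro limit precisely when $\mathbf P \bm j \neq \bm 0$ for $\bm j \neq \bm 0$; the rational-independence hypothesis on $\mathbf P$ guarantees exactly this non-vanishing. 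Linear combinations extend the identity to trigonometric polynomials, uniform density promotes it to continuous periodic $H$, and an $L^2$ density argument finally delivers it for all $H \in L^2_{\rm per}(\mathbb T^n)$.

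The main obstacle is therefore this Weyl equidistribution step together with the careful bookkeeping needed to ensure that the rational-independence assumption is actually used at the right moment; once it is in place, the remainder of the lemma is a direct transcription of the standard periodic theory through the parent-function dictionary.
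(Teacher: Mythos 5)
The paper does not prove this lemma at all --- it is quoted verbatim from Bohr's theory of almost periodic functions with only the citation \cite{bohr2018almost} --- so there is no in-paper argument to compare against; your proposal has to stand on its own. On its own it is essentially the standard route: the bijection $\bm k\mapsto \mathbf P\bm k$ (injectivity being exactly the rational independence of the columns of $\mathbf P$), the isometry $\|H\circ\mathcal p\|_{\rm qp}=\|H\|_{\rm per}$ obtained from Weyl equidistribution of the winding plane $\{\mathbf P^\intercal\bm z\}$ in $\mathbb T^n$, and the transfer of the classical Fourier expansion and Parseval identity for the parent function $F$. The computation for pure exponentials is correct: the normalized integral over the cube is $\prod_i \sin(q_iL)/(q_iL)$, which tends to zero outright whenever $\mathbf P\bm j\neq\bm 0$ --- this is a genuine limit, not a Ces\`aro limit as you describe it, though that is cosmetic. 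The steps from exponentials to trigonometric polynomials to continuous $H$ are sound, and for continuous $F$ the whole lemma follows, since $|S_NF-F|^2$ is then itself continuous and the bridging identity yields $\|S_NF\circ\mathcal p-f\|_{\rm qp}=\|S_NF-F\|_{\rm per}\to 0$.

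The one step that does not work as written is the final ``$L^2$ density argument.'' The functional $H\mapsto m(H\circ\mathcal p)$ involves restricting $H$ to the $d$-dimensional winding plane, which has measure zero in $\mathbb T^n$ when $d<n$; for $H$ defined only up to a.e.\ equivalence this restriction is not even well defined, and it is certainly not continuous with respect to $\|\cdot\|_{\rm per}$, so you cannot pass from continuous $H$ to general $H\in L^2_{\rm per}(\mathbb T^n)$ by density. This matters because Definition~\ref{df1} only asks for a periodic parent, not a continuous one. The classical resolution is either to assume $F$ continuous (Bohr's uniformly almost periodic setting, which covers everything the paper actually uses) or to work in the Besicovitch completion, where \eqref{gfs} and \eqref{pareq} hold by construction. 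You should either add a continuity (or Riemann-integrability) hypothesis on the parent function at this step, or state explicitly that the series converges in the $B^2$ seminorm sense; with that amendment the proof is complete.
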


Correspondingly, one can define the Sobolev spaces $H^m_{\rm qp}(\mathbb R^d),m\in\mathbb Z, m>0$ for quasiperiodic functions
\begin{equation}
    H^m_{\rm qp}(L^2_{\rm per}(\mathbb R^d))=\{f(\bm z)\in \mathbb R^d,\|f(\bm z)\|_{m,\rm qp}<\infty\},
\end{equation}
and its associated norm and semi-norm, 
\begin{equation}
    \|f(\bm z)\|_{m,\rm qp}^2=\sum_{\bm q\in\mathbb Z[{\rm col}(\mathbf{P})]}(1+|\bm q|^{2m})|\hat{f}_{\bm q}|^2, \quad |f(\bm z)|_{m,\rm qp}^2=\sum_{\bm q\in\mathbb Z[{\rm col}(\mathbf{P})]}|\bm q|^{2m}|\hat{f}_{\bm q}|^2.
\end{equation}

Theorem \ref{cosis} relates the generalized coefficients of quasiperiodic functions to the Fourier coefficients of its parent function (\cite{jiang2022numerical}). 
\begin{theorem}\label{cosis}
    Let $f(\bm x)$ be a $d$-dimensional quasiperiodic function. There exists a parent function $F(\bm x)$, which has a one-to-one correspondence between their Fourier coefficients, i.e.
    \begin{equation}\label{paf}
   f(z)=\sum_{\bm q\in\mathbb Z[{\rm col}(\mathbf{P})]}\hat{f}_{\bm q}e^{\mathrm{i}  \langle \bm{q},\bm{z} \rangle},\qquad     F(\bm x)=\sum_{\bm k \in\mathbb Z^n}\hat{F}_{\bm k}e^{{\rm i}\langle\bm k,\bm x\rangle}.
    \end{equation}
    One has
    \begin{equation}
    \quad \hat{F}_{\bm k}=\hat{f}_{\bm q}, \;\; {\rm iff} \;\; \bm q=\mathbf{P}\bm k.
    \end{equation}
\end{theorem}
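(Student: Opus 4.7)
The plan is to derive the claimed coefficient identification by directly inserting $\bm x = \mathbf P^\intercal \bm z$ into the Fourier series of the parent function $F$ and then comparing the result with the generalized Fourier expansion of $f$ provided by Lemma \ref{thm2}.

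First I would start from the $L^2$-convergent expansion
\begin{equation*}
F(\bm x) = \sum_{\bm k\in\mathbb Z^n} \hat F_{\bm k}\, e^{\mathrm{i}\langle \bm k, \bm x\rangle},\quad \bm x\in \mathbb T^n,
\end{equation*}
and substitute $\bm x = \mathbf P^\intercal \bm z$. Using the adjoint identity $\langle \bm k, \mathbf P^\intercal \bm z\rangle = \langle \mathbf P \bm k, \bm z\rangle$ together with the defining relation $f(\bm z) = F(\mathbf P^\intercal \bm z)$ from Definition \ref{df1}, one obtains
\begin{equation*}
f(\bm z) = \sum_{\bm k\in\mathbb Z^n} \hat F_{\bm k}\, e^{\mathrm{i}\langle \mathbf P \bm k, \bm z\rangle}.
\end{equation*}
By the very definition $\mathbb Z[\mathrm{col}(\mathbf P)] = \{\mathbf P\bm k : \bm k\in\mathbb Z^n\}$, every frequency appearing in this sum belongs to the quasiperiodic dual lattice used in Lemma \ref{thm2}.

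Next I would invoke the uniqueness of the generalized Fourier representation. Because the columns of $\mathbf P$ are linearly independent over $\mathbb Q$, the map $\bm k \mapsto \mathbf P \bm k$ is injective on $\mathbb Z^n$ and hence a bijection onto $\mathbb Z[\mathrm{col}(\mathbf P)]$. Taking the $(\cdot,\cdot)_{\rm qp}$-inner product of the above identity with $e^{\mathrm{i}\langle \bm q, \bm z\rangle}$ for an arbitrary $\bm q\in \mathbb Z[\mathrm{col}(\mathbf P)]$ and applying the orthonormality relation \eqref{eq: qporth} collapses the series to the single term with $\mathbf P \bm k = \bm q$, yielding
\begin{equation*}
\hat f_{\bm q} = (f, e^{\mathrm{i}\langle \bm q, \bm z\rangle})_{\rm qp} = \hat F_{\bm k},\quad \bm q = \mathbf P \bm k,
\end{equation*}
which is precisely the claimed correspondence.

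The main obstacle will be the injectivity step: if distinct $\bm k, \bm k'\in\mathbb Z^n$ satisfied $\mathbf P \bm k = \mathbf P \bm k'$, then $\mathbf P(\bm k - \bm k') = \bm 0$ with $\bm k-\bm k'\in\mathbb Z^n\setminus\{\bm 0\}$ would contradict the $\mathbb Q$-linear independence of the columns of $\mathbf P$ stipulated in Definition \ref{df1}; this rules out the pathological collapse of different Fourier modes onto the same quasiperiodic frequency and legitimizes the reindexing. A secondary and more routine technicality is the commutation of the infinite sum with the substitution $\bm x = \mathbf P^\intercal \bm z$, which can be handled by passing to partial sums, exploiting the Parseval equality \eqref{pareq} and the corresponding identity for $F$, and then using the continuity of the Fourier--Bohr inner product.
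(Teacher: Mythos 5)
Your argument is correct. Note, however, that the paper itself offers no proof of this theorem --- it is quoted as a known result with a citation to \cite{jiang2022numerical} --- so there is no in-paper proof to compare against. Your route (substitute $\bm x=\mathbf P^{\intercal}\bm z$ into the Fourier series of $F$, use the adjoint identity $\langle\bm k,\mathbf P^{\intercal}\bm z\rangle=\langle\mathbf P\bm k,\bm z\rangle$, and extract coefficients via the orthonormality \eqref{eq: qporth}) is the standard one, and you correctly identify the two points that need care: the injectivity of $\bm k\mapsto\mathbf P\bm k$, which does follow from the $\mathbb Q$-linear independence of the columns of $\mathbf P$ since $\bm k-\bm k'$ is an integer (hence rational) vector, and the interchange of the infinite sum with the substitution and with the $(\cdot,\cdot)_{\rm qp}$ pairing. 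For the latter, the cleanest justification is exactly the one you sketch: finite partial sums $\sum_{\|\bm k\|\leq N}\hat F_{\bm k}e^{\mathrm{i}\langle\mathbf P\bm k,\bm z\rangle}$ have $\|\cdot\|_{\rm qp}$-norm equal to the $\|\cdot\|_{\rm per}$-norm of the corresponding partial sums of $F$ (by \eqref{eq: qporth} and Parseval), so the series converges in $L^2_{\rm qp}$ to $f$, and Cauchy--Schwarz then lets you pair term by term. One cosmetic remark: the $\delta_{\bm q_1,\bm q_2}$ in \eqref{eq: qporth} is a Kronecker delta despite the paper's wording, which is what your collapse-to-a-single-term step actually uses.
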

Thus, one can define the bijective mapping 
\begin{equation}\label{eq: phi}
\phi(\bm{k})=\mathbf{P}\bm{k}=\bm{q}, \;\; \bm{k}\in \mathbb{Z}^{n}, \;\; \bm{q}\in \mathbb{Z}[{\rm col}(\mathbf{P})],
\end{equation}
so as to establish a one-to-one correspondence between the generalized Fourier coefficient of the quasiperiodic function and the Fourier series of the periodic function. Besides, it is necessary to introduce the injective mapping 
\begin{equation}\label{eq: psi}
\mathcal{p}(\bm{z})=\mathbf{P}^\intercal \bm{z}=\bm{x},\;\; \bm{z}\in \mathbb{R}^d,\;\;\bm{x}\in \mathbb{R}^n/\mathbb T^n
\end{equation}
and a functional $\mathcal{P}[\cdot]$ such that one can transform between the quasiperiodic function and its periodic parent function through
\begin{equation}\label{eq: pP}
f(\bm{z})=F\circ    \mathcal{p}(\bm{z}),\quad  \mathcal{P} [f](\bm{x})=F(\bm{x}).
\end{equation}
Given a certain differential operator $D$ w.r.t. variable $\bm{z}$, we denote $D_{\mathcal{p}}$ as the differential operator of $\bm{x}$ obtained by taking variable substitution $\bm{x}=\mathcal{p}(\bm{z})$ and applying the chain rule on $D$. For instance, $\nabla_{  \mathcal{p} }\times$ and $\nabla_{  \mathcal{p} }\cdot$ are respectively given by
\begin{align}
&    \nabla_{ \mathcal{p}}\times\bm F= \sum_{j=1}^n\big(P_{2j}\partial_{x_j} F_3     -P_{3j}\partial_{x_j}F_2,P_{3j}\partial_{x_j}F_1-P_{1j}\partial_{x_j}F_3,P_{1j}\partial_{x_j}F_2-P_{2j}\partial_{x_j}F_1\big)^{\intercal},\\
&     \nabla_{ \mathcal{p}}\cdot \bm F=\sum_{j=1}^n\big(P_{1j} \partial_{x_j}F_1 +P_{2j}\partial_{x_j}F_2+P_{3j}\partial_{x_j}F_3\big),
\end{align}
where $\bm F(\bm x)=(F_1,F_2,F_3)^{\intercal}$. Consequently, one canvreadily obtain the transformation identities
\begin{subequations}
\begin{align}
& D \bm{f}(\bm{z})=D_{\mathcal{p}} \bm{F}\circ \mathcal{p}(\bm{z}),\label{eq: transid2} \\ 
& D_1 D_2 \bm{f}(\bm{z})=(D_{1})_{\mathcal{p}}(D_{2})_{\mathcal{p}} \bm{F}\circ \mathcal{p}(\bm{z}),\label{eq: transid3} \\ 
& \mathcal{P}[Df]=D_{ \mathcal{p}}F, \label{eq: transid1}
\end{align}
\end{subequations}
which will frequently be used in the forthcoming sections. 

In what follows, we are concerned with two typical problems of quasiperiodic photonic crystals.  The first one is the quasiperiodic double-curl source problem (see Eq. \eqref{ccp}), which arises naturally from the time discretization of Maxwell equations with quasiperiodic electric permittivity
\begin{equation}\label{eq001}
    \begin{split}
        \partial_t\bm D-\nabla\times \bm H=-\bm J, &\quad\quad\nabla\cdot \bm D=\bm \rho,\quad{\rm in}\  \mathbb R^3,\\
        \partial_t\bm B+\nabla\times \bm E=\bm 0,&\quad\quad\nabla\cdot \bm B=0,\quad{\rm in}\  \mathbb R^3,
    \end{split}
\end{equation}
where $\bm E, \bm D, \bm B, \bm H,\bm \rho$ and $\bm J$ are electric field, electric flux density, magnetic induction, magnetic field intensity, electric charge density and current density, respectively. Here, $\bm E, \bm D, \bm B, \bm H$ are related by the constitutive relation 
\begin{equation}\label{eq: constutive}
\bm D=\varepsilon \bm E, \quad \bm B=\mu\bm H, 
\end{equation}
where the magnetic permeability  $\mu$ is prescribed as 1 for simplicity, while the electric permittivity $\varepsilon$ is assumed to be quasiperiodic. $\varepsilon(\bm z)$ is a bounded quasiperiodic electric permittivity satisfying $0<\varepsilon_A\leq \varepsilon \leq \varepsilon_B<\infty$. As shown in Figure \ref{fig000}, a three-dimensional quasicrystal that can excite a quasiperiodic dielectric constant is presented.  The other problem is the quasiperiodic double-curl eigenvalue problem (see Eq. \eqref{curlcurl}) obtained from the following Maxwell eigenvalue problem \cite{lyu2021fame,chern2015singular} with quasi-periodic constitutive relation \eqref{eq: constutive}
\begin{equation}\label{eq002}
    \begin{split}
        \nabla\times \bm E={\rm i}\omega \bm B, &\quad\quad\nabla\cdot \bm D=0,\quad{\rm in}\  \mathbb R^3,\\
        \nabla\times \bm H=-{\rm i} {\omega \bm D},&\quad\quad\nabla\cdot \bm B=0,\quad{\rm in}\  \mathbb R^3,
    \end{split}
\end{equation}
which is inevitable when solving the energy states of the quasicrystal photonic crystals. 

\begin{figure}[h]
	\centering
	\includegraphics[width=0.35\textwidth]{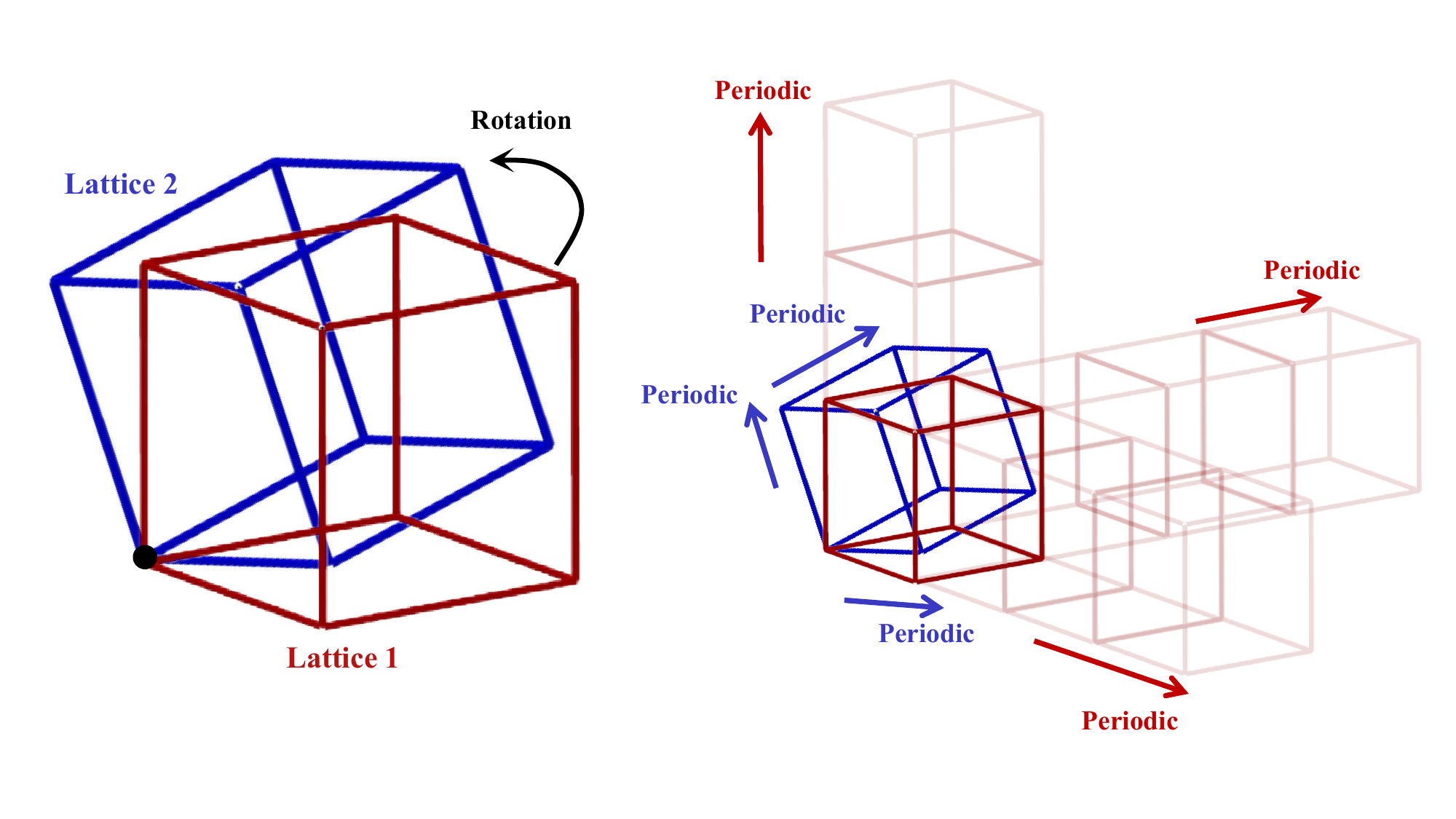}
        \includegraphics[width=0.5\textwidth]{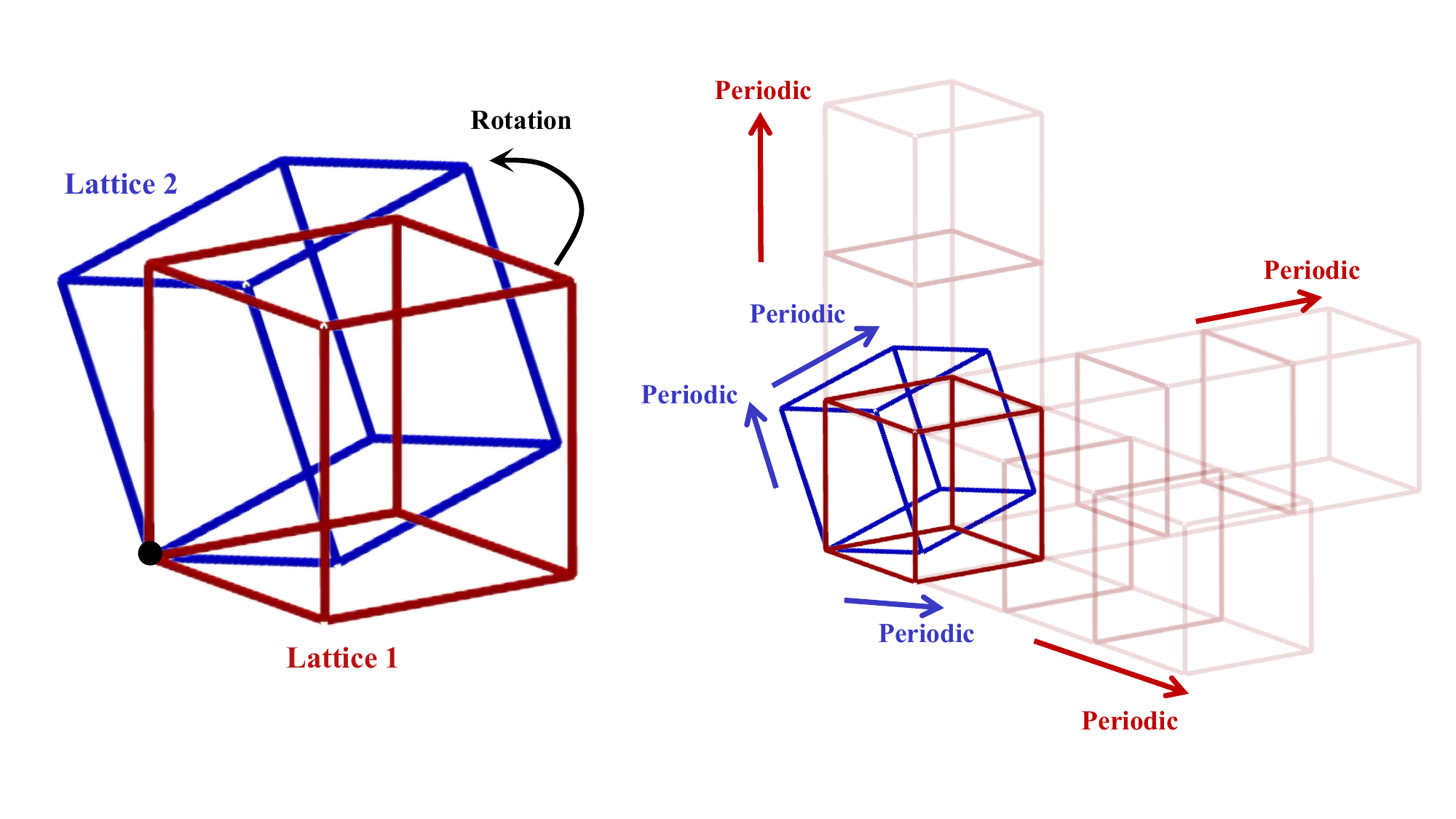}
	\caption{A schematic diagram of a quasicrystal. The red and blue colors represent two identical three-dimensional tetragonal lattices, each exhibiting periodicity in three dimensions. The blue lattice is obtained by rotating the red lattice around a fixed point (black dot) by a certain angle. The superposition of the two lattices forms the quasicrystal.}\label{fig000}
\end{figure}

\section{Divergence-free projection method for quasiperiodic source problem}\label{s3}
In this section, we are devoted to designing numerical methods for the quasiperiodic curl-curl source problem:
\begin{equation}\label{ccp}
    \nabla\times(\varepsilon^{-1}\nabla\times\bm u)+\kappa\bm u=\bm g \;\; {\rm in}\;\;\mathbb R^3,
\end{equation}
where $\bm u(\bm z)$ usually represents the magnetic field satisfying $\nabla\cdot\bm u=0$, $\kappa$ is a given positive constant and $\bm g(\bm z)$ is a body force term satisfying $\nabla\cdot\bm g=0$. We shall propose an accurate and efficient divergence-free projection scheme with the merits of point-wise divergence-free preserving and adhering to de Rham complex. Rigorous error analysis is conducted to validate the accuracy of the proposed method.

\subsection{Quasiperiodic divergence-free basis}\label{ss3}
The first key ingredient of the proposed method to achieve these appealing properties is to construct a point-wise divergence-free quasiperiodic Fourier spectral basis through the following quasiperiodic de Rham complex and its associated commuting diagram (I):
\begin{equation}\label{drc0}
    \begin{tikzcd}[sep=small]
        \mathbb R \rar["Id"] & H_{\rm qp}^1(\mathbb{R}^3) \rar["\nabla"] \dar["\Pi_{N,\rm qp}^1"] & H_{\rm qp}({\rm curl},\mathbb{R}^3) \rar["\nabla\times"] \dar["\Pi_{N,\rm qp}^2"] & H_{\rm qp}({\rm div},\mathbb{R}^3) \rar["\nabla\cdot"] \dar["\Pi_{N,\rm qp}^3"] & L^2_{\rm qp}(\mathbb{R}^3)  \rar["0"]\dar["\Pi_{N,\rm qp}^4"] & 0 \\
        & H_{N,\rm qp}^1(\mathbb{R}^3)\rar["\nabla"]  & H_{N,\rm qp}({\rm curl},\mathbb{R}^3) \rar["\nabla\times"]  & H_{N,\rm qp}({\rm div},\mathbb{R}^3) \rar["\nabla\cdot"]  & L^2_{N,\rm qp}(\mathbb{R}^3)  &   .  \end{tikzcd}
    \end{equation}
 In fact, the bottom row consists of truncated Fourier spaces:
 \begin{align}
     & H_{N,\rm qp}^1(\mathbb{R}^3)=L^2_{N,\rm qp}(\mathbb{R}^3)={\rm span}\big\{ e^{{\rm i}\langle \bm q,\bm z\rangle}\;\big|\; \bm{q}\in  \mathbb{I}_{{\rm qp},N}\big\},\\
     & H_{N,\rm qp}({\rm curl},\mathbb{R}^3)=H_{N,\rm qp}({\rm div},\mathbb{R}^3)={\rm span}\big\{ e^{{\rm i}\langle \bm q,\bm z\rangle}\bm e_j,\;j=1,2,3   \big|\;\bm{q}\in  \mathbb{I}_{{\rm qp},N}  \big\},
 \end{align}
where $\bm e_j$ is the $j$-th unit basis vector and $\mathbb{I}_{{\rm qp},N}$ is the truncated quasiperiodic Fourier index set given by
\begin{equation}\label{eq: indexqp}
\mathbb{I}_{{\rm qp},N}= \big\{   {\bm q\in \mathbb Z[{\rm col}(\mathbf{P})] \big| \;\|\phi^{-1}(\bm q)\|_{\infty}\leq N}   \big\}.
\end{equation}
Correspondingly, the commuting projection operators $\Pi_{N,\rm qp}^j$ are defined as
\begin{align}
&   \Pi_{N,\rm qp}^1 u=\textstyle \sum_{\bm q\in \mathbb{I}_{{\rm qp},N} }\hat{u}_qe^{{\rm i}\langle\bm q,\bm z\rangle},\;\;\;\hat{u}_{\bm q}=(u,e^{\mathrm{i}  \langle \bm{q},\bm{z} \rangle})_{\rm qp}, \;\;\; \forall u\in H_{\rm qp}^1(\mathbb{R}^3), \label{eq:projqp1}\\
& \Pi_{N,\rm qp}^2\bm u=  \textstyle \sum_{\bm q\in \mathbb{I}_{{\rm qp},N} } \hat{\bm u}_qe^{{\rm i}\langle\bm q,\bm z\rangle},\;\;\; \hat{\bm u}_{\bm q}=(\bm{u},e^{\mathrm{i}  \langle \bm{q},\bm{z} \rangle})_{\rm qp},\;\;\; \forall u\in H_{N,\rm qp}({\rm curl},\mathbb{R}^3),\label{eq:projqp2}
\end{align}
and one has $\Pi_{N,\rm qp}^1 u=\Pi_{N,\rm qp}^4 u$ and $\Pi_{N,\rm qp}^2\bm u=\Pi_{N,\rm qp}^3\bm u$.

With the help of the above quasiperiodic de Rham complex, mappings $\mathcal{p}$ and $\mathcal{P}$ in Eq. \eqref{eq: pP} and the transformation identites \eqref{eq: transid2}-\eqref{eq: transid1}, we arrive at the second quasiperiodic complex and its associated diagram (II):
\begin{equation}\label{drc}
    \begin{tikzcd}[sep=small]
        \mathbb R \rar["Id"] & H_{\rm qp}^1(\mathbb{R}^3) \rar["\nabla"] \dar["\mathcal{P}"] & H_{\rm qp}({\rm curl},\mathbb{R}^3) \rar["\nabla\times"] \dar["\mathcal{P}"] & H_{\rm qp}({\rm div},\mathbb{R}^3) \rar["\nabla\cdot"] \dar["\mathcal{P}"] & L^2_{\rm qp}(\mathbb{R}^3)  \rar["0"]\dar["\mathcal{P}"] & 0 \\
        & H_{\rm per}^1 (\mathbb T^n)\rar["\nabla_{\mathcal{p}}"] \dar["\Pi_{N,\rm per}^1"] & H_{\rm per}(\nabla_{\mathcal{p}}\times,\mathbb T^n) \rar["\nabla_{\mathcal{p}}\times"] \dar["\Pi_{N,\rm per}^2"] & H_{\rm per}(\nabla_{\mathcal{p}}\cdot,\mathbb T^n) \rar["\nabla_{\mathcal{p}}\cdot"] \dar["\Pi_{N,\rm per}^3"] & L^2_{\rm per}(\mathbb T^n)  \dar["\Pi_{N,\rm per}^4"] & \\
        & H_{N,\rm per}^1(\mathbb T^n) \rar["\nabla_{\mathcal{p}}"] & H_{N,\rm per}(\nabla_{\mathcal{p}}\times,\mathbb T^n) \rar["\nabla_{\mathcal{p}}\times"] & H_{N,\rm per}(\nabla_{\mathcal{p}}\cdot,\mathbb T^n) \rar["\nabla_{\mathcal{p}}\cdot"] & L^2_{N,\rm per}(\mathbb T^n) & .
    \end{tikzcd}
    \end{equation}
Define the index set $\mathbb{I}_{{\rm per},N}:=\big\{\bm{k}= \mathbb Z^n \big| \|\bm k\|_{\infty}\leq N \big\}$ and the  discrete spaces in the bottom row as follows    
\begin{align}
    &H_{N,\rm per}^1(\mathbb T^n)=L^2_{N,\rm per}(\mathbb T^n)={\rm span}\big\{ e^{{\rm i}\langle\bm k,\bm x\rangle} \;\big| \bm{k}\in \mathbb{I}_{{\rm per},N} \big\},\\
    &H_{N,\rm per}(\nabla_{\mathcal{p}}\times,\mathbb T^n)=H_{N,\rm per}(\nabla_{\mathcal{p}}\cdot,\mathbb T^n)={\rm span}\big\{ e^{{\rm i}\langle\bm k,\bm x\rangle}\bm e_j \;j=1,2,3 \big| \bm{k}\in \mathbb{I}_{{\rm per},N} \big\}.
\end{align}
The corresponding commuting projections can be defined in a similar manner by
\begin{align}
& \Pi_{\rm per}^1\mathcal{P} [u]= \Pi_{\rm per}^4\mathcal{P} [u]=\textstyle \sum_{{\bm k}\in \mathbb{I}_{{\rm per},N}}\hat{U}_{\bm k}e^{{\rm i}\langle\bm k,\bm x\rangle},\;\; \forall \mathcal{P} [u] \in H_{\rm per}^1(\mathbb T^n), \label{eq: projper1}\\
& \Pi_{\rm per}^2 \mathcal{P}[\bm u]=\Pi_{\rm per}^3 \mathcal{P}[\bm u]=\textstyle \sum_{ {\bm k}\in \mathbb{I}_{{\rm per},N} }\hat{\bm U}_{\bm k}e^{{\rm i}\langle\bm k,\bm x\rangle},\;\; \forall \mathcal{P} [\bm u] \in H_{N,\rm per}(\nabla_{\mathcal{p}}\times,\mathbb T^n),\label{eq: projper2}
\end{align}
where we recall theorem \ref{cosis} and Eqs. \eqref{eq:projqp1}-\eqref{eq:projqp2} to obtain that
\begin{equation}
\hat{U}_{\bm k}=\hat{u}_{\bm q}, \;\;\; \hat{\bm U}_{\bm k}=\hat{\bm u}_{\bm q},\;\; {\rm iff} \;\; \bm q=\mathbf{P}\bm k.
\end{equation}
By the transformation identities Eqs.\eqref{eq: transid2}-\eqref{eq: transid1}, it is straightforward to verify that the above diagram is commutable.

Now we are ready to present divergence-free bases for approximating functions in  $H_{\rm qp}({\rm curl},\mathbb{R}^3)$ and $H_{N,\rm per}(\nabla_{\mathcal{p}}\times,\mathbb T^n)$ spaces in Theorem \ref{dfthm}.

\begin{theorem}\label{dfthm}
$\bm{V}_{N,{\rm qp}}^{\rm div0}:=\{\bm u\in H_{N,\rm qp}({\rm curl},\mathbb{R}^3)| \nabla \cdot \bm u=0  \}$ is a divergence-free (w.r.t. the operator $\nabla\cdot $) and $H_{\rm qp}({\rm curl},\mathbb{R}^3)$-conforming approximation space taking the form
\begin{equation}\label{eq: VNdiv0}
\begin{split}
\bm{V}_{N,{\rm qp}}^{\rm div0}&={\rm span}\Big\{ \bm{\psi}_{\bm q}^j(\bm z):={\bm d}_j({\bm q})e^{{\rm i}\langle\bm q,\bm z\rangle},\;j=1,2 \big| \bm{q}\in  \mathbb{I}_{{\rm qp},N} \Big\},
\end{split}
\end{equation}
where $\{{\bm d}_1({\bm q}) ,{\bm d}_2({\bm q})\}$ is an orthonormal basis for the null space of $\bm q.$ The  corresponding divergence-free (w.r.t. the operator $\nabla_{\mathcal{p}}\cdot $) and $H_{\rm per}({\nabla_{\mathcal{p}}\times,\mathbb T^n})$-conforming approximation space $\bm{V}_{N,{\rm per}}^{\rm div0}:=\big \{ \bm U\in H_{N,\rm per}(\nabla_{\mathcal{p}}\times,\mathbb T^n)\big|\nabla_{\rho}\cdot \bm U=0 \big \}$ reads
\begin{equation}\label{eq:Vnper}
\bm{V}_{N,{\rm per}}^{\rm div0}=  {\rm span}\Big\{ \bm{\Psi}_{\bm k}^j(\bm x)=\bm{d}_j(\mathbf{P}\bm k)e^{{\rm i}\langle\bm k,\bm x\rangle},\;j=1,2  \;\big| \bm{k}\in  \mathbb{I}_{{\rm per},N}  \Big\}.
\end{equation}
\end{theorem}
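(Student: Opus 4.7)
The plan is in two stages: first characterize $\bm V_{N,\rm qp}^{\rm div0}$ directly by inspecting Fourier coefficients, then transfer the result to the periodic analogue through the commuting diagram (II) in \eqref{drc}.

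For the first stage, I would take an arbitrary $\bm u = \sum_{\bm q \in \mathbb I_{{\rm qp},N}} \hat{\bm u}_{\bm q} e^{{\rm i}\langle \bm q, \bm z\rangle}$ in $H_{N,\rm qp}({\rm curl},\mathbb R^3)$ with coefficients $\hat{\bm u}_{\bm q}\in\mathbb C^{3}$, and compute its divergence termwise to obtain $\nabla \cdot \bm u = \sum_{\bm q \in \mathbb I_{{\rm qp},N}} {\rm i}(\bm q \cdot \hat{\bm u}_{\bm q}) e^{{\rm i}\langle \bm q, \bm z\rangle}$. The orthogonality relation \eqref{eq: qporth} together with the Parseval identity \eqref{pareq} then forces the equivalence $\nabla \cdot \bm u = 0 \iff \bm q \cdot \hat{\bm u}_{\bm q} = 0$ for every $\bm q \in \mathbb I_{{\rm qp},N}$, i.e.\ $\hat{\bm u}_{\bm q}$ must lie in the null space $\{\bm v \in \mathbb C^3 : \bm q \cdot \bm v = 0\}$, which is two-dimensional whenever $\bm q \neq \bm 0$. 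Expanding $\hat{\bm u}_{\bm q}$ in the orthonormal pair $\{\bm d_1(\bm q), \bm d_2(\bm q)\}$ realises $\bm u$ as a linear combination of the $\bm \psi_{\bm q}^j$. The reverse inclusion is immediate since each $\bm \psi_{\bm q}^j$ is trivially in $H_{N,\rm qp}({\rm curl},\mathbb R^3)$ and satisfies $\nabla \cdot \bm \psi_{\bm q}^j = {\rm i}(\bm q \cdot \bm d_j(\bm q)) e^{{\rm i}\langle \bm q, \bm z\rangle} = 0$ by construction of $\bm d_j$, yielding \eqref{eq: VNdiv0}.

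For the second stage, I would read off \eqref{eq:Vnper} from the commutativity of diagram (II). Theorem \ref{cosis} combined with the bijection $\phi(\bm k) = \mathbf P \bm k$ in \eqref{eq: phi} shows that $\mathcal P$ restricts to an isomorphism between $H_{N,\rm qp}({\rm curl},\mathbb R^3)$ and $H_{N,\rm per}(\nabla_{\mathcal p}\times, \mathbb T^n)$ sending $e^{{\rm i}\langle \bm q,\bm z\rangle}\bm e_\ell$ to $e^{{\rm i}\langle \bm k,\bm x\rangle}\bm e_\ell$ with $\bm q = \mathbf P\bm k$, while the transformation identity \eqref{eq: transid1} converts $\nabla \cdot$ into $\nabla_{\mathcal p}\cdot$. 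Hence $\mathcal P$ maps the kernel of $\nabla\cdot$ isomorphically onto the kernel of $\nabla_{\mathcal p}\cdot$ inside the truncated spaces, and applying $\mathcal P$ to each $\bm \psi_{\bm q}^j$ produces $\bm \Psi_{\bm k}^j = \bm d_j(\mathbf P \bm k) e^{{\rm i}\langle \bm k, \bm x\rangle}$, which is therefore a basis for $\bm V_{N,\rm per}^{\rm div0}$.

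The argument has no real obstacle beyond minor bookkeeping at the single index $\bm q = \bm 0$, where $\{\bm v \in \mathbb C^3 : \bm 0 \cdot \bm v = 0\} = \mathbb C^3$ is three- rather than two-dimensional; the stated pair $\{\bm d_1(\bm 0), \bm d_2(\bm 0)\}$ must be either tacitly supplemented by a third coordinate direction at this single mode or sidestepped by restricting to zero-mean functions, a cosmetic choice that does not disturb the spectral-accuracy analysis that follows. Everything else is standard Fourier bookkeeping enabled by the generalized Parseval identity and the parent-function correspondence.
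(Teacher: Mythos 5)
Your proposal is correct and follows essentially the same route as the paper's proof: expand in the quasiperiodic Fourier basis, use the orthogonality relation \eqref{eq: qporth} to decouple the divergence-free constraint into $\bm q\cdot\hat{\bm u}_{\bm q}=0$ per mode, expand each coefficient in $\{\bm d_1(\bm q),\bm d_2(\bm q)\}$, and transfer to the periodic setting via the transformation identities and Theorem \ref{cosis}. Your remark about the zero mode $\bm q=\bm 0$, where the null space is three-dimensional, is a legitimate caveat that the paper's proof also leaves implicit, but it does not change the substance of the argument.
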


\begin{proof}
Recall from Eq. \eqref{eq:projqp2} that any $\bm{u}\in H_{N,\rm qp}({\rm curl},\mathbb{R}^3)$ can be expanded by $\bm u=\textstyle \sum_{\bm q\in \mathbb{I}_{{\rm qp},N} } \hat{\bm u}_{\bm q}e^{{\rm i}\langle\bm q,\bm z\rangle}$, one can impose the divergence-free condition on $\bm{u}$ and use the orthogonality in Eq. \eqref{eq: qporth} to obtain a sequence of decoupled linear equations
\begin{equation*}
\hat{\bm u}_{\bm q}\cdot \bm q=0, \;\; \bm{q}\in  \mathbb{I}_{{\rm qp},N}.
\end{equation*}
Thus,  if $\bm u$ is divergence-free, then $\hat{\bm u}_{\bm q}$ belongs to the null space of $\bm q=(q_1,q_2,q_3)^{\intercal}$ and can be uniquely determined by a linear combination of a basis $\{{\bm d}_1({\bm q}) ,{\bm d}_2({\bm q})\}$ of this null space as $\hat{\bm u}_{\bm q}=\hat u^{1}_{\bm q} {\bm d}_1({\bm q})+\hat u^{2}_{\bm q} {\bm d}_2({\bm q})$. As a result, we readily obtain 
\begin{equation}
\bm u(\bm z)=\textstyle \sum_{\bm q\in \mathbb{I}_{{\rm qp},N} } \hat{\bm u}_{\bm q}e^{{\rm i}\langle\bm q,\bm z\rangle}=\textstyle \sum_{\bm q\in \mathbb{I}_{{\rm qp},N} }  [\hat u^{1}_{\bm q}  {\bm d}_1({\bm q})+\hat u^{2}_{\bm q}  {\bm d}_2({\bm q})]e^{{\rm i}\langle\bm q,\bm z\rangle},\quad \bm u\in \bm{V}_{N,{\rm qp}}^{\rm div0},
\end{equation}
which leads to the desired basis in Eq. \eqref{eq: VNdiv0}. 


Correspondingly, with the help of the transformation identities Eqs.\eqref{eq: transid2}-\eqref{eq: transid1} and Eq. \eqref{eq: projper2}, one can obtain Eq. \eqref{eq:Vnper} in a similar manner, which ends the proof. 
\end{proof}


\subsection{The divergence-free projection scheme} 

The weak formulation of the quasiperiodic double-curl source problem \eqref{ccp} is to find $\bm u\in\bm V_{\rm qp}^{\rm div0}:=\big\{\bm u\in H_{\rm qp}({\rm curl},\mathbb{R}^3)|\; \nabla \cdot \bm u=0  \big \}$ such that
\begin{equation}\label{ccpw}
    (\varepsilon^{-1}\nabla\times\bm u,\nabla\times\bm v)+\kappa(\bm u,\bm v)=(\bm g,\bm v) \quad \forall \bm v\in \bm V_{\rm qp}^{\rm div0}.
\end{equation}

With the help of the divergence-free Fourier spectral basis for quasiperiodic functions, the proposed point-wise divergence-free projection scheme reads: find $\bm u^N\in\bm V_{N,\rm qp}^{\rm div0}$ such that
\begin{equation}\label{apps}
    (\varepsilon^{-1}\nabla\times\bm u^N,\nabla\times\bm v^N)+\kappa(\bm u^N,\bm v^N)=(\bm g,\bm v^N) \quad \forall \bm v^N\in \bm V_{N,\rm qp}^{\rm div0}.
\end{equation}
By Eqs. \eqref{ccpw} and \eqref{apps}, one defines the corresponding bilinear form for $\bm u,\bm v\in \bm V_{\rm qp}^{\rm div0}$,
\begin{equation}
    \mathcal{A}(\bm u,\bm v)=(\varepsilon^{-1}\nabla\times\bm u,\nabla\times\bm v)+\kappa(\bm u,\bm v),
\end{equation}
and its corresponding periodic form: for 
This bilinear form satisfies the following continuity and coercivity
\begin{align}\label{laxm1}
    &\mathcal{A}(\bm u,\bm v)\leq \max(\varepsilon_A^{-1},\kappa)\|\bm u\|_{\rm qp,curl}\|\bm v\|_{\rm qp,curl},&{\rm for}\;\bm u,\bm v\in \bm V_{\rm qp}^{\rm div0},\\
    &\mathcal{A}(\bm u,\bm u)\geq\min(\varepsilon_B^{-1},\kappa)\|\bm u\|_{\rm qp,curl}^2,&{\rm for}\;\bm u\in \bm V_{\rm qp}^{\rm div0}.\label{laxm2}
\end{align}

\begin{proposition}
    We consider the operator $\pi_{\rm qp}:{\bm V_{\rm qp}^{\rm div0}}\rightarrow\bm V_{N,\rm qp}^{\rm div0}:$ 
\begin{equation}
    \pi_{\rm qp}\bm u= \textstyle\sum_{\bm q\in \mathbb{I}_{{\rm qp},N} } [\hat u^{1}_{\bm q} \bm{\psi}_{\bm q}^1(\bm z)+\hat u^{2}_{\bm q} \bm{\psi}_{\bm q}^2(\bm z)],
\end{equation}
where $\hat u^{j}_{\bm q}=\langle \bm{\hat{u}}_{\bm q},\bm d_j(\bm q)\rangle$, $\bm{\hat{u}}_{\bm q}$ is the Fourier coefficient of $\bm u$ as defined in Eq. \eqref{dfeq1}. It satisfies that 
\begin{equation}\label{projection}
    (\nabla\times(\bm u-\pi_{\rm qp}\bm u),\nabla\times\bm v)+(\bm u-\pi_{\rm qp}\bm u,\bm v)=0,\;\;\forall\bm v\in\bm V_{N,\rm qp}^{\rm div0},
\end{equation}
which impiles that $\pi_{\rm qp}$ is a projection from $\bm V_{\rm qp}^{\rm div0}$ to $\bm V_{N,\rm qp}^{\rm div0}$.
\end{proposition}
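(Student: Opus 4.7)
The plan is to verify the orthogonality identity by expanding everything in the divergence-free quasiperiodic Fourier basis from Theorem \ref{dfthm}, and then exploit the fact that $\bm u - \pi_{\rm qp}\bm u$ and $\bm v$ have disjoint Fourier supports.

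First, I would use the generalized Fourier expansion from Lemma \ref{thm2} to write any $\bm u\in \bm V_{\rm qp}^{\rm div0}$ as $\bm u=\sum_{\bm q\in\mathbb Z[{\rm col}(\mathbf P)]} \hat{\bm u}_{\bm q}\,e^{\mathrm{i}\langle \bm q,\bm z\rangle}$. The divergence-free constraint $\nabla\cdot\bm u=0$, combined with the orthogonality \eqref{eq: qporth}, gives $\hat{\bm u}_{\bm q}\cdot \bm q=0$ for every $\bm q$, exactly as in the proof of Theorem \ref{dfthm}. Hence $\hat{\bm u}_{\bm q}$ lies in the null space of $\bm q$ and admits the unique decomposition $\hat{\bm u}_{\bm q}=\hat u^1_{\bm q}\bm d_1(\bm q)+\hat u^2_{\bm q}\bm d_2(\bm q)$ with $\hat u^j_{\bm q}=\langle \hat{\bm u}_{\bm q},\bm d_j(\bm q)\rangle$. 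Consequently the difference admits the clean representation
\begin{equation*}
\bm u-\pi_{\rm qp}\bm u=\sum_{\bm q\in \mathbb Z[{\rm col}(\mathbf P)]\setminus \mathbb{I}_{{\rm qp},N}} \bigl[\hat u^1_{\bm q}\bm d_1(\bm q)+\hat u^2_{\bm q}\bm d_2(\bm q)\bigr]e^{\mathrm{i}\langle \bm q,\bm z\rangle},
\end{equation*}
so that its generalized Fourier support is contained in the complement of the truncation index set $\mathbb{I}_{{\rm qp},N}$.

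Next, I would take an arbitrary $\bm v\in \bm V_{N,\rm qp}^{\rm div0}$, whose Fourier support by construction lies in $\mathbb{I}_{{\rm qp},N}$, and compute termwise. Since $\nabla\times (\bm d_j(\bm q)\,e^{\mathrm{i}\langle\bm q,\bm z\rangle})=\mathrm{i}\bm q\times \bm d_j(\bm q)\,e^{\mathrm{i}\langle\bm q,\bm z\rangle}$, the curl does not shift frequencies, so $\nabla\times(\bm u-\pi_{\rm qp}\bm u)$ is supported outside $\mathbb{I}_{{\rm qp},N}$ while $\nabla\times\bm v$ is supported inside. Applying the quasiperiodic orthogonality \eqref{eq: qporth} mode-by-mode, both inner products $(\nabla\times(\bm u-\pi_{\rm qp}\bm u),\nabla\times\bm v)_{\rm qp}$ and $(\bm u-\pi_{\rm qp}\bm u,\bm v)_{\rm qp}$ vanish individually, which yields \eqref{projection}.

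Finally, to confirm that $\pi_{\rm qp}$ is genuinely a projection, I would check idempotency: $\pi_{\rm qp}\bm u\in \bm V_{N,\rm qp}^{\rm div0}$ by definition, and if $\bm w\in \bm V_{N,\rm qp}^{\rm div0}$ then its Fourier expansion truncates to $\mathbb{I}_{{\rm qp},N}$ and $\pi_{\rm qp}\bm w=\bm w$. Combined with linearity, this gives $\pi_{\rm qp}^2=\pi_{\rm qp}$. There is no real obstacle here; the argument is essentially a bookkeeping exercise with Fourier supports, and the only point requiring care is to invoke the Bohr-type orthogonality \eqref{eq: qporth} rather than the ordinary periodic orthogonality, since indices $\bm q\in \mathbb Z[{\rm col}(\mathbf P)]$ are generally irrational vectors in $\mathbb R^3$.
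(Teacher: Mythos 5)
Your proof is correct and rests on the same core mechanism as the paper's: the Bohr-type orthogonality \eqref{eq: qporth} applied to the disjoint Fourier supports of $\bm u-\pi_{\rm qp}\bm u$ (outside $\mathbb{I}_{{\rm qp},N}$) and $\bm v$ (inside $\mathbb{I}_{{\rm qp},N}$). The only place you diverge is the curl term: you observe directly that $\nabla\times$ acts mode-by-mode as multiplication by $\mathrm{i}\bm q\times(\cdot)$ and therefore preserves Fourier support, so the same disjoint-support argument kills $(\nabla\times(\bm u-\pi_{\rm qp}\bm u),\nabla\times\bm v)$; the paper instead integrates by parts and uses the divergence-free condition on both $\bm u$ and $\pi_{\rm qp}\bm u$ to rewrite the term as $(-\Delta(\bm u-\pi_{\rm qp}\bm u),\bm v)$ before invoking orthogonality. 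Your route is marginally more elementary, since it needs neither the vector identity $\nabla\times\nabla\times=-\Delta$ on divergence-free fields nor the divergence-free property of the truncation at that step, while the paper's version makes the role of the constraint $\nabla\cdot\bm u=\nabla\cdot(\pi_{\rm qp}\bm u)=0$ explicit. Your added idempotency check is a harmless bonus the paper omits.
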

\begin{proof}
    Because the Fourier coefficient of $\bm u$ has the property $\hat{\bm u}_{\bm q}=\hat u^{1}_{\bm q} {\bm d}_1({\bm q})+\hat u^{2}_{\bm q} {\bm d}_2({\bm q})$ for all $\bm q\in\mathbb{I}_{{\rm qp},N}$, then by the orthogonality in Eq. \eqref{eq: qporth}, one has 
    \begin{equation}\label{322}
    (\bm u-\pi_{\rm qp}\bm u,\bm v)=0,\;\;\forall\bm v\in\bm V_{N,\rm qp}^{\rm div0}.
    \end{equation}
    Because $\nabla\cdot\bm u=\nabla\cdot(\pi_{\rm qp}\bm u)=0$, by the orthogonality in Eq. \eqref{eq: qporth}, one obtains 
\begin{equation}\label{333}
    (\nabla\times(\bm u-\pi_{\rm qp}\bm u),\nabla\times\bm v)=(-\Delta(\bm u-\pi_{\rm qp}\bm u),\bm v)=0,\;\;\forall\bm v\in\bm V_{N,\rm qp}^{\rm div0}.
\end{equation}
Combining Eqs. \eqref{322} and \eqref{333}, this proposition is proved.
\end{proof}

To derive the error estimate for the proposed scheme, we first introduce the approximation result in Lemma \ref{piqp}. 

\begin{lemma}\label{piqp}
    Suppose that $\bm u$ is a quasiperiodic vector function and define $\bm w=\nabla\times \bm u$. If $\mathcal{P}[\bm u],\mathcal{P}[\bm w]\in H_{\rm per}^m(\mathbb T^n)^3$ with $m\in\mathbb Z^+$, then
    \begin{equation}
        \|\pi_{\rm qp}\bm u-\bm u\|_{\rm qp,curl}\leq N^{-m}(|\mathcal{P}[\bm u]|_{m,\rm per}+|\mathcal{P}[\bm w]|_{m,\rm per}),
    \end{equation}
    where $\|\cdot\|_{\rm qp,curl}$ is the energy norm of $H_{\rm qp}({\rm curl},\mathbb{R}^3)$, which is defined as
    \begin{equation}\label{hcurlnorm}
        \|\bm u\|_{\rm qp,curl}^2=\|\bm u\|_{\rm qp}^2+\|\nabla\times\bm u\|_{\rm qp}^2,\;\;\bm u\in H_{\rm qp}({\rm curl},\mathbb{R}^3).
    \end{equation}
\end{lemma}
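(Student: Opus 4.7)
The plan is to carry out a direct Fourier-tail estimate, leveraging the fact that both $\bm u$ and $\pi_{\rm qp}\bm u$ admit the divergence-free quasiperiodic expansion from Theorem \ref{dfthm}. Since $\bm u$ is divergence-free, Theorem \ref{dfthm} guarantees $\hat{\bm u}_{\bm q}=\hat u^{1}_{\bm q}\bm d_1(\bm q)+\hat u^{2}_{\bm q}\bm d_2(\bm q)$ for every $\bm q \in \mathbb Z[\mathrm{col}(\mathbf P)]$, so the defining formula for $\pi_{\rm qp}\bm u$ is simply the truncation of the quasiperiodic Fourier series of $\bm u$ to $\mathbb I_{\rm qp,N}$. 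Consequently, $\bm u-\pi_{\rm qp}\bm u=\sum_{\bm q\notin \mathbb I_{\rm qp,N}}\hat{\bm u}_{\bm q}e^{\mathrm i\langle \bm q,\bm z\rangle}$, and applying the curl termwise gives $\nabla\times(\bm u-\pi_{\rm qp}\bm u)=\sum_{\bm q\notin\mathbb I_{\rm qp,N}}\hat{\bm w}_{\bm q}e^{\mathrm i\langle\bm q,\bm z\rangle}$ with $\hat{\bm w}_{\bm q}=\mathrm i\,\bm q\times\hat{\bm u}_{\bm q}$, because these coefficients are precisely the quasiperiodic Fourier coefficients of $\bm w=\nabla\times\bm u$.

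Next, I would apply the Parseval identity from Lemma \ref{thm2} to both summands of the energy norm \eqref{hcurlnorm}, yielding
\begin{equation*}
\|\bm u-\pi_{\rm qp}\bm u\|_{\rm qp,curl}^{2}=\sum_{\bm q\notin \mathbb I_{\rm qp,N}}\bigl(|\hat{\bm u}_{\bm q}|^{2}+|\hat{\bm w}_{\bm q}|^{2}\bigr).
\end{equation*}
Using the bijection $\phi(\bm k)=\mathbf P\bm k$ from \eqref{eq: phi} together with Theorem \ref{cosis}, I would rewrite the sum as one over $\bm k\in\mathbb Z^{n}$ with $\|\bm k\|_{\infty}>N$, using the Fourier coefficients $\hat{\bm U}_{\bm k}$ and $\hat{\bm W}_{\bm k}$ of the parent functions $\mathcal P[\bm u]$ and $\mathcal P[\bm w]$. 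This is the step where the projection-matrix structure of the truncation index $\mathbb I_{\rm qp,N}$ is exploited: the cutoff $\|\phi^{-1}(\bm q)\|_{\infty}\le N$ becomes the standard cutoff $\|\bm k\|_{\infty}\le N$ on the periodic side.

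Then I would insert the trivial lower bound $|\bm k|\ge \|\bm k\|_{\infty}>N$ into the obvious inequality $1\le N^{-2m}|\bm k|^{2m}$ valid outside the truncation set, so each tail sum is bounded by $N^{-2m}\sum_{\bm k}|\bm k|^{2m}|\hat{\bm U}_{\bm k}|^{2}=N^{-2m}|\mathcal P[\bm u]|_{m,\rm per}^{2}$, and similarly for $\bm w$. Adding the two bounds, taking the square root, and using $\sqrt{a^{2}+b^{2}}\le a+b$ for nonnegative $a,b$ produces the advertised estimate.

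The only real subtlety — and the step I would watch most carefully — is the transfer from the quasiperiodic tail indexed by $\bm q\notin\mathbb I_{\rm qp,N}$ to the periodic tail indexed by $\|\bm k\|_{\infty}>N$; this requires that the correspondence $\hat{\bm u}_{\bm q}=\hat{\bm U}_{\bm k}$ (and the analogous one for $\bm w$) genuinely extends to vector-valued quasiperiodic functions component-by-component, which follows by applying Theorem \ref{cosis} to each scalar component of $\bm u$ and $\bm w$. Everything else is bookkeeping.
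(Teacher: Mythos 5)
Your proof is correct, but it takes a more direct route than the paper's. The paper first invokes the Galerkin-orthogonality identity \eqref{projection} with $\bm v=\pi_{\rm qp}\bm u$ to write $\|\bm u-\pi_{\rm qp}\bm u\|_{\rm qp,curl}^2=(\nabla\times(\bm u-\pi_{\rm qp}\bm u),\nabla\times\bm u)+(\bm u-\pi_{\rm qp}\bm u,\bm u)$, then uses orthogonality to insert $\Pi_{N,\rm qp}^2$ and Cauchy--Schwarz to reach $\|\bm u-\pi_{\rm qp}\bm u\|_{\rm qp,curl}\le\|\bm u-\Pi_{N,\rm qp}^2\bm u\|_{\rm qp}+\|\bm w-\Pi_{N,\rm qp}^2\bm w\|_{\rm qp}$, and finally cites the standard periodic projection estimate. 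You instead observe at the outset that, since $\hat{\bm u}_{\bm q}=\hat u^1_{\bm q}\bm d_1(\bm q)+\hat u^2_{\bm q}\bm d_2(\bm q)$ for divergence-free $\bm u$, the operator $\pi_{\rm qp}$ coincides with the plain Fourier truncation $\Pi_{N,\rm qp}^2$, so the error is exactly a Fourier tail and Parseval gives $\|\bm u-\pi_{\rm qp}\bm u\|_{\rm qp,curl}^2=\sum_{\bm q\notin\mathbb I_{\rm qp,N}}(|\hat{\bm u}_{\bm q}|^2+|\hat{\bm w}_{\bm q}|^2)$ on the nose; the tail bound and $\sqrt{a^2+b^2}\le a+b$ then finish the argument. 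Your version is more elementary (no variational machinery, no division by the error norm, and it yields the stronger $\ell^2$-combined bound $N^{-m}\bigl(|\mathcal P[\bm u]|_{m,\rm per}^2+|\mathcal P[\bm w]|_{m,\rm per}^2\bigr)^{1/2}$ as an intermediate step), while the paper's variational route is the one that generalizes when the discrete space is not simply a Fourier truncation. The two subtleties you flag — that the quasiperiodic-to-periodic index transfer uses $\|\phi^{-1}(\bm q)\|_\infty\le N$ matching $\|\bm k\|_\infty\le N$, and that Theorem \ref{cosis} applies componentwise — are exactly the points the paper handles via Parseval and Theorem \ref{cosis}, so nothing is missing. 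Note only that, like the paper, you are implicitly using $\bm u\in\bm V_{\rm qp}^{\rm div0}$ (not merely "a quasiperiodic vector function"), since the identification $\pi_{\rm qp}\bm u=\Pi_{N,\rm qp}^2\bm u$ requires $\nabla\cdot\bm u=0$; this matches how the lemma is applied in Theorem \ref{sourceea}.
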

\begin{proof}
    By Eqs. \eqref{projection} and \eqref{hcurlnorm}, one obtains
    \begin{equation}
         \begin{split}
             \|\bm u-\pi_{\rm qp}\bm u\|_{\rm qp,curl}^2&=(\nabla\times(\bm u-\pi_{\rm qp}\bm u),\nabla\times(\bm u-\pi_{\rm qp}\bm u))+(\bm u-\pi_{\rm qp}\bm u,\bm u-\pi_{\rm qp}\bm u)\\
             &=(\nabla\times(\bm u-\pi_{\rm qp}\bm u),\nabla\times\bm u)+(\bm u-\pi_{\rm qp}\bm u,\bm u).
         \end{split}
    \end{equation}
    Then because of the orthogonality in Eq. \eqref{eq: qporth}, it gives
    \begin{equation}
        (\nabla\times(\bm u-\pi_{\rm qp}\bm u),\nabla\times\bm u)=(\nabla\times(\bm u-\pi_{\rm qp}\bm u),\bm w-\Pi_{N,\rm qp}^2\bm w)
    \end{equation}
    and
    \begin{equation}
        (\bm u-\pi_{\rm qp}\bm u,\bm u)=(\bm u-\pi_{\rm qp}\bm u,\bm u-\Pi_{N,\rm qp}^2\bm u),
    \end{equation}
    where $\Pi_{N,\rm qp}^2$ is defined in Eq. \eqref{eq:projqp2}. By the fact that
    \begin{equation}
        (\nabla\times(\bm u-\pi_{\rm qp}\bm u),\bm w-\Pi_{N,\rm qp}^2\bm w)\leq \|\bm u-\pi_{\rm qp}\bm u\|_{\rm qp,curl}\|\bm w-\Pi_{N,\rm qp}^2\bm w\|_{\rm qp}
    \end{equation}
    and
    \begin{equation}
        (\bm u-\pi_{\rm qp}\bm u,\bm u-\Pi_{N,\rm qp}^2\bm u)\leq\|\bm u-\pi_{\rm qp}\bm u\|_{\rm qp,curl}\|\bm u-\Pi_{N,\rm qp}^2\bm u\|_{\rm qp},
    \end{equation}
    one obtains that
    \begin{equation}
        \|\bm u-\pi_{\rm qp}\bm u\|_{\rm qp,curl}\leq\|\bm u-\Pi_{N,\rm qp}^2\bm u\|_{\rm qp}+\|\bm w-\Pi_{N,\rm qp}^2\bm w\|_{\rm qp}.
    \end{equation}
    By Parseval's identities for periodic functions and Theorem \ref{cosis}, one has
    \begin{equation}
    \begin{split}
        &\|\Pi_{N,\rm qp}^2\bm u-\bm u\|_{\rm qp}=\|\Pi_{N,\rm per}^2\mathcal{P}[\bm u]-\mathcal{P}[\bm u]\|_{\rm per},\\
        &\|\Pi_{N,\rm qp}^2\bm w-\bm w\|_{\rm qp}=\|\Pi_{N,\rm per}^2\mathcal{P}[\bm w]-\mathcal{P}[\bm w]\|_{\rm per}.
    \end{split}
    \end{equation}
    Then with the help of the projection error estimation result of periodic functions (cf. \cite[p.34]{shen2011spectral}), one readily obtains the desired result.  
\end{proof}

The error estimate of the divergence-free projection scheme \eqref{apps} can now be stated as follows.
\begin{theorem}\label{sourceea}
    Let $\bm u,\bm u^N$ be respectively the solutions of Eqs. \eqref{ccp} and \eqref{ccpw}, and define $\bm w=\nabla\times\bm u$. If $\mathcal{P}[\bm u],\mathcal{P}[\bm w]\in H_{\rm per}^m(\mathbb T^n)^3$ with $m\in\mathbb Z^+$, then we have
    \begin{equation}\label{eq335}
        \|\bm u-\bm u^N\|_{\rm qp,curl}\leq CN^{-m}(|\mathcal{P}[\bm u]|_{m,\rm per}+|\mathcal{P}[\bm w]|_{m,\rm per}),
    \end{equation}
    where the positive constant $C$ only depends on $\kappa,\varepsilon_A$ and $\varepsilon_B$.
\end{theorem}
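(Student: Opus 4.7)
The plan is to combine the coercivity/continuity estimates (3.7)--(3.8) with the approximation bound in Lemma \ref{piqp} through a standard Céa-type argument. Since the discrete space $\bm V_{N,\rm qp}^{\rm div0}$ is a subspace of $\bm V_{\rm qp}^{\rm div0}$, both $\bm u$ and $\bm u^N$ satisfy the same variational identity when tested against elements of $\bm V_{N,\rm qp}^{\rm div0}$. Subtracting the two equations yields the Galerkin orthogonality
\begin{equation*}
\mathcal{A}(\bm u - \bm u^N, \bm v^N) = 0, \quad \forall \bm v^N \in \bm V_{N,\rm qp}^{\rm div0}.
\end{equation*}

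Next I would invoke Céa's lemma. For any $\bm v^N \in \bm V_{N,\rm qp}^{\rm div0}$, write $\bm u - \bm u^N = (\bm u - \bm v^N) + (\bm v^N - \bm u^N)$. Using coercivity \eqref{laxm2} on $\bm v^N - \bm u^N$, then Galerkin orthogonality to re-express the bilinear form with argument $\bm u - \bm v^N$, and finally continuity \eqref{laxm1}, I would obtain
\begin{equation*}
\|\bm u - \bm u^N\|_{\rm qp, curl} \leq \frac{\max(\varepsilon_A^{-1}, \kappa)}{\min(\varepsilon_B^{-1}, \kappa)} \inf_{\bm v^N \in \bm V_{N,\rm qp}^{\rm div0}} \|\bm u - \bm v^N\|_{\rm qp, curl}.
\end{equation*}
This reduces the error estimate to a best-approximation problem in the divergence-free subspace.

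The final step is to bound the best-approximation error by a concrete interpolant. I would take $\bm v^N = \pi_{\rm qp} \bm u \in \bm V_{N,\rm qp}^{\rm div0}$ (which is admissible since $\nabla \cdot \bm u = 0$ implies the projection preserves the mode structure), and directly invoke Lemma \ref{piqp} to conclude
\begin{equation*}
\|\bm u - \bm u^N\|_{\rm qp, curl} \leq C \|\bm u - \pi_{\rm qp} \bm u\|_{\rm qp, curl} \leq C N^{-m} \big( |\mathcal{P}[\bm u]|_{m,\rm per} + |\mathcal{P}[\bm w]|_{m,\rm per} \big),
\end{equation*}
with $C$ depending only on $\kappa, \varepsilon_A, \varepsilon_B$, which is exactly \eqref{eq335}.

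None of the three steps presents a serious obstacle in isolation, since the heavy lifting (constructing the divergence-free basis, proving commutativity of the de Rham diagrams, and quantifying the projection error on the parent space) has already been done in Theorem \ref{dfthm} and Lemma \ref{piqp}. The one subtle point I would be careful about is ensuring that $\pi_{\rm qp} \bm u$ indeed lies in $\bm V_{N,\rm qp}^{\rm div0}$ and not merely in $H_{N,\rm qp}({\rm curl}, \mathbb{R}^3)$; this follows because the modal expansion of $\bm u$ has coefficients lying in the null space of each $\bm q$ (by $\nabla \cdot \bm u = 0$ and the orthogonality \eqref{eq: qporth}), so the truncation preserves the divergence-free property mode by mode.
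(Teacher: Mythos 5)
Your proposal is correct and follows essentially the same route as the paper: Galerkin orthogonality from subtracting \eqref{apps} from \eqref{ccpw}, the Céa quasi-optimality bound via the coercivity and continuity estimates \eqref{laxm1}--\eqref{laxm2}, and then choosing $\bm v^N=\pi_{\rm qp}\bm u$ and invoking Lemma \ref{piqp}. The only cosmetic difference is that the paper applies coercivity directly to $\bm u-\bm u^N$ rather than to $\bm v^N-\bm u^N$, which affects the constant only by an inessential additive term.
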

\begin{proof}
Taking $\bm v=\bm v^N$ in Eq. \eqref{ccpw}, and by Eq. \eqref{apps}, one obtains the error equation
\begin{equation}
    \mathcal{A}(\bm u-\bm u^N,\bm v)=0,\;\;\forall \bm v^N\in\bm V_{N,\rm qp}^{\rm div0},
\end{equation}
which, together with Eqs. \eqref{laxm1} and \eqref{laxm2}, implies that
\begin{equation}
\begin{split}
    \min(\varepsilon_B^{-1},\kappa)\|\bm u-\bm u^N\|_{\rm qp,curl}^2&\leq \mathcal{A}(\bm u-\bm u^N,\bm u-\bm u^N)=\mathcal{A}(\bm u-\bm u^N,\bm u-\bm v^N)\\
    &\leq \max(\varepsilon_A^{-1},\kappa)\|\bm u-\bm u^N\|_{\rm qp,curl}\|\bm u-\bm v^N\|_{\rm qp,curl}.
\end{split}
\end{equation}
Then one has
\begin{equation}
    \|\bm u-\bm u^N\|_{\rm qp,curl}\leq C\inf_{\bm v^N\in\bm V_{N,\rm qp}^{\rm div0}}\|\bm u-\bm v^N\|_{\rm qp,curl}.
\end{equation}
where the positive constant $C=\max(\varepsilon_B,\kappa^{-1}) \max(\varepsilon_A^{-1},\kappa)$. This theorem is proved by taking $\bm v^N=\pi_{\rm qp}\bm u$ and together with Lemma \ref{piqp}.
\end{proof}

\begin{remark}
    It should be noted that under the quasiperiodic condition, the Poincare inequality $\|\bm u\|_{\rm qp}\leq\|\nabla\times\bm u\|_{\rm qp}$ does not hold due to the lack of ellipticity. Thus Eq. \eqref{eq335} cannot be bounded only by $|\mathcal{P}[\bm w]|_{m,\rm per}$, which differs from the usual error estimates of Maxwell problems such as \cite{li2023energy}.
\end{remark}

\subsection{Solution algorithm for quasiperiodic curl-curl source problems}
Next, we describe the solution algorithm as well as the implementation detail of the DF-PM for quasiperiodic curl-curl source problems. 

Consider the uniform mesh $T_h=\{x=mh,\; m=0,1,\cdots,N-1\}^n$ for domain $\mathbb T^n$  with equidistant mesh size $h=2\pi/N$ in each direction, where $N$ is assumed to be an even positive integer for convenience. 

For any $\bm U\in \bm V_{N,\rm qp}^{\rm div0}$ and $\bm F=(\mathcal{P}[\varepsilon])^{-1}\bm U$, define $I_N$ and $I_N^{\rm div0}$ as the trigonometric interpolation operators \cite{shen2011spectral},
\begin{equation}\label{inpn}
    (I_N\bm F)(\bm x)=\textstyle \sum_{\bm k\in\mathbb{I}_{{\rm per},N} }\Tilde{\bm F}_{\bm k}e^{{\rm i}\langle\bm k,\bm x\rangle},\;\; (I_N^{\rm div0}\bm U)(\bm x)=\sum_{\bm k\in\mathbb{I}_{{\rm per},N} }[\tilde{U}^{1}_{\bm k}\bm{\Psi}_{\bm k}^1(\bm x)+\tilde{U}^{2}_{\bm k}\bm{\Psi}_{\bm k}^2(\bm x)],
\end{equation}
where the pseudo-spectral coefficients $\tilde{U}^{j}_{\bm k}$ and $\Tilde{\bm F}_{\bm k}$ are determined such that $(I^{\rm div0}_N\bm U)(\bm x_h)=\bm U(\bm x_h)$ and $(I_N\bm F)(\bm x_h)=\bm F(\bm x_h)$ hold for all $\bm x_h\in T_h$. 

Before we propose the solution algorithm, we first give some necessary notations. $\mathbf I_{n\times m},\mathbf 0_{n\times m}$ and $\mathbf 1_{n\times m}$ represent $n\times m$ identity matrix, zero matrix and all-ones matrix, respectively. We further introduce the notation of ``vec"  to arrange all $U_{\bm k}$ into a column vector $\Vec{\bm U}$, i.e. $\Vec{\bm U}={\rm vec}(U_{\bm k})$, following standard column-wise linear index order of vector $\bm k$ (see \cite[p.27-28]{golub2013matrix} for more details). 

For convenience, we first introduce the definition of the pseudo-spectral divergence-free Fourier coefficient column vector.
\begin{definition}\label{dfFccv}
    For any $\bm U\in \bm V_{N,\rm qp}^{\rm div0}$, its pseudo-spectral divergence-free Fourier coefficient column vector is
    \begin{equation}
        \tilde{\bm U}=\begin{bmatrix}
        \mathbf I_{L\times L}\\\mathbf 0_{L\times L}
    \end{bmatrix}\Vec{{\bm U}}_1+\begin{bmatrix}
        \mathbf 0_{L\times L}\\\mathbf I_{L\times L}
    \end{bmatrix}\Vec{{\bm U}}_2,
    \end{equation}
    where 
    \begin{equation}
        \mathbf I_1=\begin{bmatrix}
        \mathbf I_{L\times L},\mathbf 0_{L\times L}
    \end{bmatrix},\;\;\mathbf I_2=\begin{bmatrix}
        \mathbf 0_{L\times L},\mathbf I_{L\times L}
    \end{bmatrix}
    \end{equation}
    and the $2L\times1$ column vectors are
    \begin{equation}
        \Vec{{\bm U}}_1={\rm vec}(\tilde{U}^{1}_{\bm k}),\;\;\Vec{{\bm U}}_2={\rm vec}(\tilde{U}^{2}_{\bm k}),
    \end{equation}
    where $L$ is the cardinality of $\mathbb{I}_{{\rm per},N}$.
\end{definition}
The vector $\tilde{\bm U}$ contains all the pseudo-spectral divergence-free Fourier coefficients of $\bm U$. One also defines the column vectors
\begin{equation}\label{t}
        \Vec{\bm R}_j={\rm vec}(R^{j}_{\bm k}),\;\;\Vec{\bm S}_j={\rm vec}(S^{j}_{\bm k}),\;\;\vec{\bm T}={\rm vec}(T_{\bm k}),\;\;j=1,2,3,
\end{equation}
where
\begin{equation}
R_{\bm k}^j=\bm e_j^{\intercal}\bm d_1(\mathbf P\bm k),\;\;S_{\bm k}^j=\bm e_j^{\intercal}\bm d_2(\mathbf P\bm k),\;\;T_{\bm k}=\mathrm{i}|\mathbf P\bm k|,
\end{equation}
and $\mathrm{i}$ is the imaginary unit. The two $L\times 3$ matrices $\mathbf R,\mathbf S$ are defined as
\begin{equation}\label{rstw}
    \mathbf{R}=\left(\Vec{\bm R}_1,\Vec{\bm R}_2,\Vec{\bm R}_3\right),\;\;\mathbf{S}=\left(\Vec{\bm S}_1,\Vec{\bm S}_2,\Vec{\bm S}_3\right).
\end{equation}
Then we consider the explicit form of the pseudo-spectral divergence-free Fourier coefficient column vector of $\bm K=\nabla_{\mathcal{p}}\times\bm F$, where $\bm F\in H_{N,\rm per}(\nabla_{\mathcal{p}}\times,\mathbb T^n)$. This form gives the discrete curl operator of the DF-PM, and it is introduced in Theorem \ref{solutional}.
\begin{theorem}\label{solutional}
    For any $\bm F\in H_{N,\rm per}(\nabla_{\mathcal{p}}\times,\mathbb T^n)$, one assumes the $L\times 3$ Fourier coefficient matrix of $\bm F$ is $\hat{\mathbf F}$. The pseudo-spectral divergence-free Fourier coefficient column vector of $\bm K=\nabla_{\mathcal{p}}\times\bm F$ is
    \begin{equation}\label{qk}
        \tilde{\bm K}={\mathbf C}\left(\begin{bmatrix}
        \mathbf I_{L\times L}\\\mathbf 0_{L\times L}
    \end{bmatrix}\hat{\mathbf F}\circ{\mathbf R}\mathbf 1_{3\times 1}+\begin{bmatrix}
        \mathbf 0_{L\times L}\\\mathbf I_{L\times L}
    \end{bmatrix}\hat{\mathbf F}\circ\mathbf{S}\mathbf 1_{3\times 1}\right),
    \end{equation}
    where the notation $\circ$ denotes the Hadamard product between two matrices and the $2L\times2L$ matrix $\mathbf{C}$ is defined as
    \begin{equation}\label{matrixc}
        \mathbf{C}=\begin{bmatrix}
            \mathbf{0}_{L\times L} & \mathbf{I}_{L\times L}\vec{\bm T}\\
            -\mathbf{I}_{L\times L}\vec{\bm T} & \mathbf{0}_{L\times L}
        \end{bmatrix}.
    \end{equation}
    Specially, if $\bm U\in \bm V_{N,\rm per}^{\rm div0}$, and its pseudo-spectral divergence-free Fourier coefficient column vector is $\tilde{\bm U}$, then the pseudo-spectral divergence-free Fourier coefficient column vector of $\bm K=\nabla_{\mathcal{p}}\times\bm U$ is
    \begin{equation}\label{qk2}
        \tilde{\bm K}=\mathbf C\tilde{\bm U}.
    \end{equation}
\end{theorem}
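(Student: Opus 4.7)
The plan is to work mode-by-mode in the Fourier basis, compute the raw Fourier coefficients of $\bm{K}=\nabla_{\mathcal{p}}\times\bm{F}$, project them onto the two divergence-free basis vectors $\{\bm{d}_1(\mathbf{P}\bm{k}),\bm{d}_2(\mathbf{P}\bm{k})\}$ using the triple product identity, and finally repackage the resulting scalar expressions into the matrix formulation announced in \eqref{qk}. Throughout, the special case \eqref{qk2} will drop out for free once the general formula is in hand, by recognizing that the divergence-free expansion coefficients of $\bm{U}$ are precisely the projections onto $\bm{d}_1,\bm{d}_2$.

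First I would write $\bm{F}=\sum_{\bm{k}\in\mathbb{I}_{{\rm per},N}}\hat{\bm{F}}_{\bm{k}}e^{{\rm i}\langle\bm{k},\bm{x}\rangle}$ and apply $\nabla_{\mathcal{p}}\times$ component-wise using the explicit form of $\nabla_{\mathcal{p}}\times$ given just before \eqref{eq: transid2}. A direct calculation on the exponential $e^{{\rm i}\langle\bm{k},\bm{x}\rangle}$ shows that each row $P_{rj}\partial_{x_j}$ contracts with $\bm{k}$ to produce $(\mathbf{P}\bm{k})_r$, so the $\bm{k}$-th Fourier coefficient of $\bm{K}$ equals ${\rm i}(\mathbf{P}\bm{k})\times\hat{\bm{F}}_{\bm{k}}$. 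Since this vector is automatically orthogonal to $\mathbf{P}\bm{k}$, it lies in the null space spanned by $\{\bm{d}_1(\mathbf{P}\bm{k}),\bm{d}_2(\mathbf{P}\bm{k})\}$, which confirms a priori that $\bm{K}\in\bm{V}_{N,\rm per}^{\rm div0}$ and that its divergence-free coefficients $\tilde{K}^j_{\bm{k}}$ are well defined.

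Next I would extract $\tilde{K}^j_{\bm{k}}=\bigl\langle{\rm i}(\mathbf{P}\bm{k})\times\hat{\bm{F}}_{\bm{k}},\,\bm{d}_j(\mathbf{P}\bm{k})\bigr\rangle$ and apply the scalar triple product identity to transform this to ${\rm i}\bigl\langle\bm{d}_j(\mathbf{P}\bm{k})\times(\mathbf{P}\bm{k}),\,\hat{\bm{F}}_{\bm{k}}\bigr\rangle$. Using that $\{\bm{d}_1,\bm{d}_2,\mathbf{P}\bm{k}/|\mathbf{P}\bm{k}|\}$ forms an orthonormal triad, the cross products $\bm{d}_j(\mathbf{P}\bm{k})\times(\mathbf{P}\bm{k})$ reduce to $\pm|\mathbf{P}\bm{k}|\,\bm{d}_{3-j}(\mathbf{P}\bm{k})$, yielding
\begin{equation*}
\tilde{K}^1_{\bm{k}}=T_{\bm{k}}\bigl(\hat{\bm{F}}_{\bm{k}}\cdot\bm{d}_2(\mathbf{P}\bm{k})\bigr),\qquad
\tilde{K}^2_{\bm{k}}=-T_{\bm{k}}\bigl(\hat{\bm{F}}_{\bm{k}}\cdot\bm{d}_1(\mathbf{P}\bm{k})\bigr),
\end{equation*}
after fixing the orientation of $\{\bm{d}_1,\bm{d}_2\}$ consistently with the convention of the paper. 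Recognizing $\hat{\bm{F}}_{\bm{k}}\cdot\bm{d}_1(\mathbf{P}\bm{k})=\sum_j R^j_{\bm{k}}\hat{F}_{\bm{k},j}$ and the analogous expression for $\bm{d}_2$ via $S^j_{\bm{k}}$, these scalar inner products are exactly the $\bm{k}$-th entries of $\hat{\mathbf{F}}\circ\mathbf{R}\,\mathbf{1}_{3\times1}$ and $\hat{\mathbf{F}}\circ\mathbf{S}\,\mathbf{1}_{3\times1}$, respectively.

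It remains to assemble these block vectors via the stacking matrices $\bigl[\mathbf{I}_{L\times L};\mathbf{0}_{L\times L}\bigr]$ and $\bigl[\mathbf{0}_{L\times L};\mathbf{I}_{L\times L}\bigr]$ and then apply $\mathbf{C}$, whose block-antidiagonal structure with diagonal blocks $\pm\mathrm{diag}(\vec{\bm{T}})$ realizes exactly the swap-and-sign operation contained in the scalar identities above; this gives \eqref{qk}. For the special case \eqref{qk2}, since $\bm{U}\in\bm{V}_{N,\rm per}^{\rm div0}$ has $\hat{\bm{U}}_{\bm{k}}=\tilde{U}^1_{\bm{k}}\bm{d}_1(\mathbf{P}\bm{k})+\tilde{U}^2_{\bm{k}}\bm{d}_2(\mathbf{P}\bm{k})$, orthonormality of $\{\bm{d}_1,\bm{d}_2\}$ gives $\hat{\mathbf{U}}\circ\mathbf{R}\,\mathbf{1}_{3\times1}=\vec{\bm{U}}_1$ and $\hat{\mathbf{U}}\circ\mathbf{S}\,\mathbf{1}_{3\times1}=\vec{\bm{U}}_2$, so the bracketed expression in \eqref{qk} collapses precisely to $\tilde{\bm{U}}$ as in Definition \ref{dfFccv}. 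The main obstacle is really just careful bookkeeping: tracking the block structure, the Hadamard products, and fixing the sign in the cross-product identity consistently with the paper's orientation convention for $\{\bm{d}_1(\mathbf{P}\bm{k}),\bm{d}_2(\mathbf{P}\bm{k})\}$; once that convention is pinned down the identity is a routine computation.
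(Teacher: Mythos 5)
Your proposal is correct and follows essentially the same route as the paper's proof: expand $\bm F$ in Fourier modes, observe that the $\bm k$-th coefficient of $\nabla_{\mathcal{p}}\times\bm F$ is $\mathrm{i}(\mathbf P\bm k)\times\hat{\bm F}_{\bm k}$, decompose against the orthonormal triad $\{\bm d_1(\mathbf P\bm k),\bm d_2(\mathbf P\bm k),\mathbf P\bm k/|\mathbf P\bm k|\}$ so that the cross product acts as a swap-and-sign on the two tangential components scaled by $|\mathbf P\bm k|$, and assemble the result into the block matrix $\mathbf C$. Your use of the scalar triple product instead of directly expanding $\hat{\bm F}_{\bm k}$ in the triad is only a cosmetic variation, and your explicit handling of the orientation convention for $\{\bm d_1,\bm d_2\}$ (which fixes the signs in $\mathbf C$) is, if anything, slightly more careful than the paper's.
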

\begin{proof}
    By the difinition of the trigonometric interpolation operators in Eq. \eqref{inpn}, one has
    \begin{equation}\label{ink}
        I_N\bm K=\nabla_{\mathcal{p}}\times(I_N\bm F)=\textstyle \mathrm{i}\sum_{\bm k\in\mathbb{I}_{{\rm per},N} }[(\mathbf P\bm k)\times\Tilde{\bm F}_{\bm k}]e^{{\rm i}\langle\bm k,\bm x\rangle}.
    \end{equation}
    Then by the orthonormality of $\bm d_1(\mathbf P\bm k),\bm d_2(\mathbf P\bm k)$ and $\mathbf P\bm k/|\mathbf P\bm k|$, the Fourier coefficient $\Tilde{\bm F}_{\bm k}$ can be expand by
    \begin{equation}
        \Tilde{\bm F}_{\bm k}=\Tilde{F}_{\bm k}^1\bm d_1(\mathbf P\bm k)+\Tilde{F}_{k}^2\bm d_2(\mathbf P\bm k)+\Tilde{F}_{k}^3\mathbf P\bm k/|\mathbf P\bm k|,
    \end{equation}
    where $\Tilde{F}_{\bm k}^j=\langle\Tilde{\bm F}_{\bm k},\bm d_j(\mathbf P\bm k)\rangle$ for $j=1,2$ and $\Tilde{F}_{\bm k}^3=\langle\Tilde{\bm F}_{\bm k},\mathbf P\bm k/|\mathbf P\bm k|\rangle$. Hence
    \begin{equation}\label{pkfk}
        (\mathbf P\bm k)\times\Tilde{\bm F}_{\bm k}=-\Tilde{F}_{\bm k}^2\bm d_1(\mathbf P\bm k)+\Tilde{F}_{\bm k}^1\bm d_2(\mathbf P\bm k).
    \end{equation}
    Combining Eqs. \eqref{ink} and \eqref{pkfk}, one can derive the form of $\mathbf C$ and Eq. \eqref{qk}. Specially, for $\bm U\in \bm V_{N,\rm per}^{\rm div0}$, one has
    \begin{equation}
        I_N\bm K=\nabla_{\mathcal{p}}\times(I_N\bm U)=\textstyle \mathrm{i}\sum_{\bm k\in\mathbb{I}_{{\rm per},N} }[(\mathbf P\bm k)\times\Tilde{\bm U}_{\bm k}]e^{{\rm i}\langle\bm k,\bm x\rangle},
    \end{equation}
    where $\Tilde{\bm U}_{\bm k}=\Tilde{U}_{\bm k}^1\bm d_1(\mathbf P\bm k)+\Tilde{U}_{k}^2\bm d_2(\mathbf P\bm k)$. Then by the fact that 
    \begin{equation}
        (\mathbf P\bm k)\times\Tilde{\bm U}_{\bm k}=-\Tilde{U}_{\bm k}^2\bm d_1(\mathbf P\bm k)+\Tilde{U}_{\bm k}^1\bm d_2(\mathbf P\bm k),
    \end{equation}
    one obtains Eq. \eqref{qk2}.
\end{proof}

We denote that $\bm F=(\mathcal{P}[\varepsilon])^{-1}\nabla_{\mathcal{p}}\times\bm U,\;\;\bm K=\nabla_{\mathcal{p}}\times\bm F$, where $\bm U\in\bm V_{N,\rm per}^{\rm div0}$. By Theorem \ref{solutional}, the pseudo-spectral coefficient column vector of $\bm K$ is
    \begin{equation}\label{mvp1}
    \bm {\tilde{\bm K}}=\mathbf C\left(\begin{bmatrix}
        \mathbf I_{L\times L}\\\mathbf 0_{L\times L}
    \end{bmatrix}\hat{\mathbf F}\circ\mathbf{R}\mathbf 1_{3\times 1}+\begin{bmatrix}
        \mathbf 0_{L\times L}\\\mathbf I_{L\times L}
    \end{bmatrix}\hat{\mathbf F}\circ\mathbf{S}\mathbf 1_{3\times 1}\right),
\end{equation}
where the $L\times3$ matrix $\hat{\mathbf F}$ is the Fourier coefficient matrix of $\bm F$. $\hat{\mathbf F}$ has the form
\begin{equation}\label{fftifft1}
    \hat{\mathbf F}=\mathcal{F}[(\mathcal{P}[\varepsilon])^{-1}(\bm x)\cdot\mathcal{F}^{-1}(\begin{bmatrix}
        \mathbf I_{L\times L},\mathbf 0_{L\times L}
    \end{bmatrix}{\mathbf C\bm {\tilde{U}}}\mathbf 1_{1\times L}\circ\mathbf R^{\intercal}+\begin{bmatrix}
        \mathbf 0_{L\times L},\mathbf I_{L\times L}
    \end{bmatrix}{\mathbf C\bm {\tilde{\bm U}}}\mathbf 1_{1\times L}\circ\mathbf S^{\intercal})],
\end{equation}
where $\mathcal{F}(\cdot)$ and $\mathcal{F}^{-1}(\cdot)$ denote the $n$-dimensional Fourier transform and inverse Fourier transform.

\begin{remark}\label{matrixh}
    By Eqs. \eqref{mvp1} and \eqref{fftifft1}, the discrete curl-curl operator of the DF-PM can be rewritten into a matrix $\mathbf H$, which is defined in the form of a matrix-vector product $ \bm {\tilde{\bm K}}=\mathbf H\bm {\tilde{\bm U}}$. The size of $\mathbf H$ is $2L\times2L$. Thus the approximate scheme Eq. \eqref{apps} is equivalent to the following matrix equation
\begin{equation}\label{coreequation}
    (\mathbf H+\kappa\mathbf I_{2L\times2L})\bm {\tilde{\bm U}}=\bm {\tilde{\bm G}},
\end{equation}
where $\bm {\tilde{\bm G}}$ is the pseudo-spectral coefficient of $\mathcal{P}[\bm g]$ as defined in Definition \ref{dfFccv}.
\end{remark}

We solve the matrix equation Eq. \eqref{coreequation} by the GMRES solver \cite{saad1986gmres}. We input matrix-vector products instead of forming the entire coefficient matrix, making the calculation more efficient. Algorithm \ref{algorithm0} proposes the details of generating the matrix-vector product of $\mathbf H+\kappa\mathbf I_{2L\times2L}$.

\begin{algorithm}[h]
	\caption{The matrix-vector product generating of the core matrix in the divergence-free projection method for quasiperiodic source problem}
	\label{algorithm0}
    \leftline{{\bf Input:}~3D quasiperiodic electric permittivity $\varepsilon$, projection matrix $\mathbf P$, }
     \leftline{parameter $N$, and $2L\times1$ column, vector $\bm b$} 
	\leftline{{\bf Output:}~($\mathbf H+\kappa\mathbf I_{2L\times2L})\bm b$}
	\begin{algorithmic}[1]
            \State Compute $\Vec{\bm R}_j,\Vec{\bm S}_j,j=1,2,3$ and $\Vec{\bm T}$ by Eq. \eqref{t} and generate matrices $\mathbf R$ and $\mathbf S$ by Eq. \eqref{rstw}
            \State Compute the discrete curl matrix $\mathbf C$ by Eq. \eqref{matrixc}
            \State Compute the $L\times3$ matrix $\hat{\mathbf F}$ with the form
            \begin{equation*}
                \hat{\mathbf F}=\mathcal{F}[(\mathcal{P}[\varepsilon])^{-1}(\bm x)\cdot\mathcal{F}^{-1}(\begin{bmatrix}
        \mathbf I_{L\times L},\mathbf 0_{L\times L}
    \end{bmatrix}{\mathbf C\bm b}\mathbf 1_{1\times L}\circ\mathbf R^{\intercal}+\begin{bmatrix}
        \mathbf 0_{L\times L},\mathbf I_{L\times L}
    \end{bmatrix}{\mathbf C\bm b}\mathbf 1_{1\times L}\circ\mathbf S^{\intercal})]
            \end{equation*}
            \State Compute $\tilde{\bm K}$ by Eq. \eqref{mvp1}
            \State Return $\tilde{\bm K}+\kappa\bm b$
	\end{algorithmic}
\end{algorithm}

\section{Reduced projection method for quasiperiodic Maxwell eigenvalue problem}\label{s4}
In this section, we move to the quasiperiodic Maxwell eigenvalue problem in three dimensions,
\begin{equation}\label{curlcurl}
    \nabla\times\nabla\times (\varepsilon^{-1} \bm u)=\lambda \bm u,\ \ \ \ \nabla\cdot \bm u=0,\quad{\rm in}\  \mathbb R^3.
\end{equation}
The weak formulation of Eq. \eqref{curlcurl} reads: find non-trivial $\bm u\in\bm V_{\rm qp}^{\rm div0}$ and $\lambda\in\mathbb R$ such that
\begin{equation}\label{curlcurlw}
    (\nabla\times(\varepsilon^{-1}\bm u),\nabla\times\bm v)=\lambda(\bm u,\bm v) \quad \forall \bm v\in \bm V_{\rm qp}^{\rm div0}.
\end{equation}
One can use the projection method based on the divergence-free basis to solve this problem numerically, and the approximation is: find non-trivial $\bm u_N\in\bm V_{N,\rm qp}^{\rm div0}$ and $\lambda^N\in\mathbb R$ such that
\begin{equation}\label{curlcurlw2}
    (\nabla\times(\varepsilon^{-1}\bm u^N),\nabla\times\bm v^N)=\lambda^N(\bm u^N,\bm v^N) \quad \forall \bm v^N\in \bm V_{N,\rm qp}^{\rm div0}.
\end{equation}
This section introduces the first numerical algorithm for Maxwell's quasiperiodic problem without spurious modes, applying the projection method while ensuring the divergence-free condition. Before introducing the algorithm, we first focus on a decay property of Fourier coefficients, as it can guide us in further improving the projection method.

\subsection{{Decay rate of the divergence-free Fourier coefficients}}

In this part, we are intended to show that the Fourier series in terms of the divergence-free basis of the eigenvalue problem exhibits a certain decay rate. One sets $\bm u=(u_1,u_2,u_3)$, where $u_1,u_2$ and $u_3$ are 3D aperiodic functions with the same projection matrix $\mathbf P$, and $\bm f=\varepsilon^{-1}\bm u$. Before we prove the main result, we first introduce Lemmas \ref{innerpro} and \ref{sumdecay}.

\begin{lemma}\label{innerpro}
    Let $\bm p,\bm q\in\mathbb Z[{\rm col}(\mathbf P)]$, one has
    \begin{equation}
        |\langle\bm d_j(\bm p),\bm q/|\bm q|\rangle|\leq \min(|\bm p|,|\bm q|)^{-1}|\bm p-\bm q|, j=1,2,
    \end{equation}
where $\{{\bm d}_1({\bm q}) ,{\bm d}_2({\bm q})\}$ is an orthonormal basis for the null space of $\bm q$. 
\end{lemma}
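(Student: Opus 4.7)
The plan is to establish two one-sided bounds, one with $|\bm q|^{-1}$ and one with $|\bm p|^{-1}$, and then combine them; both rely on the two defining properties of $\bm d_j(\bm p)$, namely that it is a unit vector and that it is orthogonal to $\bm p$.

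For the bound with $|\bm q|^{-1}$, I would exploit $\langle \bm d_j(\bm p),\bm p\rangle=0$ algebraically to rewrite
\[
\bigl\langle \bm d_j(\bm p),\tfrac{\bm q}{|\bm q|}\bigr\rangle
=\bigl\langle \bm d_j(\bm p),\tfrac{\bm q-\bm p}{|\bm q|}\bigr\rangle,
\]
and apply Cauchy--Schwarz together with $|\bm d_j(\bm p)|=1$ to deduce
$\bigl|\langle \bm d_j(\bm p),\bm q/|\bm q|\rangle\bigr|\le |\bm p-\bm q|/|\bm q|$. This is the one-line case.

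For the bound with $|\bm p|^{-1}$ I would switch to a geometric viewpoint. Since $\bm d_j(\bm p)$ is a unit vector lying in $\bm p^{\perp}$, the inner product $\langle\bm d_j(\bm p),\bm q/|\bm q|\rangle$ is bounded in absolute value by the length of the projection of $\bm q/|\bm q|$ onto $\bm p^{\perp}$, which equals $\sin\theta$ with $\theta=\angle(\bm p,\bm q)$. Starting from the law of cosines $|\bm p-\bm q|^2=|\bm p|^2+|\bm q|^2-2|\bm p||\bm q|\cos\theta$, a direct computation gives the identity
\[
|\bm p-\bm q|^2-|\bm p|^2\sin^2\theta=\bigl(|\bm p|\cos\theta-|\bm q|\bigr)^2\ge 0,
\]
so $|\bm p|\sin\theta\le |\bm p-\bm q|$, and hence $\bigl|\langle \bm d_j(\bm p),\bm q/|\bm q|\rangle\bigr|\le |\bm p-\bm q|/|\bm p|$.

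Combining the two estimates yields $\bigl|\langle \bm d_j(\bm p),\bm q/|\bm q|\rangle\bigr|\le |\bm p-\bm q|/\max(|\bm p|,|\bm q|)\le |\bm p-\bm q|/\min(|\bm p|,|\bm q|)$, which is precisely the claim (and in fact a slightly sharper version). There is no real obstacle; the only step worth flagging is recognizing that the orthogonality $\bm d_j(\bm p)\perp\bm p$ is doing the work in two different guises: algebraically in the first bound (by replacing $\bm q$ with $\bm q-\bm p$) and geometrically in the second (by reducing the inner product to $\sin\theta$). The hypothesis $\bm p,\bm q\in\mathbb Z[{\rm col}(\mathbf P)]$ plays no role beyond guaranteeing nonzero vectors, and is there because of how the lemma will be applied in the sequel.
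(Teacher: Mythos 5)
Your proposal is correct, and it in fact proves the slightly sharper bound $|\langle\bm d_j(\bm p),\bm q/|\bm q|\rangle|\le|\bm p-\bm q|/\max(|\bm p|,|\bm q|)$. Both of your one-sided estimates check out: the algebraic one is immediate from $\bm d_j(\bm p)\perp\bm p$ and Cauchy--Schwarz, and the geometric one rests on the correct identity $|\bm p-\bm q|^2-|\bm p|^2\sin^2\theta=(|\bm q|-|\bm p|\cos\theta)^2\ge0$. Your route differs from the paper's in structure: the paper uses the orthogonality once to write $|\langle\bm d_j(\bm p),\bm q/|\bm q|\rangle|=|\langle\bm d_j(\bm p),\bm p/|\bm p|-\bm q/|\bm q|\rangle|\le|\bm p/|\bm p|-\bm q/|\bm q||$, i.e.\ it reduces everything to the distance between the two \emph{normalized} vectors (which is $2\sin(\theta/2)$), and then shows by an elementary manipulation of the law of cosines that $2-2\cos\theta\le\min(|\bm p|,|\bm q|)^{-2}|\bm p-\bm q|^2$. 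You instead prove two separate bounds, $|\bm p-\bm q|/|\bm q|$ and $|\bm p-\bm q|/|\bm p|$, and intersect them. The paper's argument is a single chain but only delivers the $\min$ in the denominator; yours is marginally longer but yields the $\max$, which is strictly stronger and arguably more transparent about where the orthogonality is used. Since only the $\min$ version is needed downstream (in Lemma 4.2 the estimate is applied with $\bm p,\bm q$ both of size comparable to $|\bm q|$), the extra sharpness is not exploited, but there is nothing to fix.
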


\begin{proof}
    By the orthonormality of $\bm d_j(\cdot)$ and the Schwarz's inequality,
    \begin{equation}
        |\langle\bm d_i(\bm p),\bm q/|\bm q|\rangle|=|\langle\bm d_i(\bm p),\bm p/|\bm p|-\bm q/|\bm q|\rangle|\leq |\bm p/|\bm p|-\bm q/|\bm q||.
    \end{equation}
    Without of generality, one assumes that $|\bm p|\geq |\bm q|$, and denotes $\theta$ as the angle between vectors $\bm p$ and $\bm q$. Then 
    \begin{equation}
        2|\bm q|(|\bm p|-|\bm q|)\cos\theta\leq 2|\bm q|(|\bm p|-|\bm q|)\leq  |\bm p|^2-|\bm q|^2,
    \end{equation}
    which implies that 
    \begin{equation}
        2|\bm q|^2-2|\bm q|^2\cos\theta\leq  |\bm p|^2+|\bm q|^2-2\langle\bm p,\bm q\rangle.
    \end{equation}
    This ends the proof.
\end{proof}

\begin{lemma}\label{sumdecay}
Suppose that a sequence $\{u_{\bm q}\},\bm q\in\mathbb Z[{\rm col}(\mathbf{P})],|u_{\bm q}|\leq 1$ satisfies
\begin{equation}
        |u_{\bm q}|\leq C_{\alpha}|\bm q|^{-\alpha}
\end{equation}
for some $\alpha>1/2$, where $C_{\alpha}$ is a positive constant. Denote $g(\bm z)$ and $G=\mathcal{P}[\bm g]$. Assume $G(\bm x)\in H_{\rm per}^m(\mathbb T^n)$, $(m\in\mathbb{Z}, \,m\geq 0)$. For any $\gamma_1\leq {\rm min}\big\{m-n, \alpha-1/2\big\}$, there exists a constant $C_{\gamma_1}$ such that
\begin{equation}\label{329}
        \sum_{\bm p\in \mathbb Z[{\rm col}(\mathbf P)]}|\hat{g}_{\bm q-\bm p}u_{\bm p}|\leq C_{\gamma_1}|\bm q|^{-\gamma_1},
\end{equation}
where $\hat{g}_{\bm p}$ is the $\bm p$th Fourier coefficient of $g(\bm z)$, $C_{\gamma_1}$ depends on $m,n,\|\mathbf P\|,|G|_{m,\rm per}$ and $\|g\|$. Moreover, if $\nabla g$ is also $L^2$ bounded, then for any $\gamma_2\leq {\rm min}\big\{m-n, \alpha+1/2\big\}$, there exists a constant $C_{\gamma_2}$ such that
\begin{equation}\label{330}
        \sum_{\bm p\in \mathbb Z[{\rm col}(\mathbf P)]}|\hat{g}_{\bm q-\bm k}u_{\bm k}\langle\bm p/|\bm p|,\bm d_j(\bm q)\rangle|\leq C_{\gamma_2}|\bm q|^{-\gamma_2}, \quad j=1,2,
\end{equation}
where $C_{\gamma_2}$ depends on $m,n,\|\mathbf P\|,|G|_{m,\rm per}$ and $\|\nabla g\|$.
\end{lemma}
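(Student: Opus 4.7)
The plan is to estimate the sum by splitting the index $\bm p$ into two regions according to the relative size of $|\bm p|$ and $|\bm q|$, thereby exploiting the two available decay mechanisms: the $\alpha$-decay of $u_{\bm p}$ (effective when $|\bm p|$ is large) and the $m$-decay of $\hat g_{\bm q-\bm p}$ from Lemma~\ref{lm: decay} (effective when $|\bm q-\bm p|$ is large). I would set the cutoff at $|\bm p|=|\bm q|/2$, so that on the low region $\{\bm p:|\bm p|\leq |\bm q|/2\}$ one automatically has $|\bm q-\bm p|\geq |\bm q|/2$. Throughout, I would transfer bounds between the quasiperiodic coefficients $\hat g_{\bm r}$ and the periodic ones $\hat G_{\bm k}$ via the bijection $\phi$ of \eqref{eq: phi}, using the comparability of $|\mathbf P\bm k|$ with $|\bm k|$ through the singular values of $\mathbf P$, so that Lemma~\ref{lm: decay} is directly applicable.

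For \eqref{329}, the low region is handled by applying Lemma~\ref{lm: decay} to get $|\hat g_{\bm q-\bm p}|\leq C|\bm q|^{-m}|G|_{m,\rm per}$, combining with $|u_{\bm p}|\leq 1$ and the lattice-point count $O(|\bm q|^n)$ for the number of terms, producing a contribution of order $|\bm q|^{n-m}$ that accounts for the $m-n$ clause. On the complementary high region, I would apply Cauchy--Schwarz,
\[
\sum_{|\bm p|>|\bm q|/2}|\hat g_{\bm q-\bm p}||u_{\bm p}|\leq \Bigl(\sum_{\bm r}|\hat g_{\bm r}|^2\Bigr)^{1/2}\Bigl(\sum_{|\bm p|>|\bm q|/2}|u_{\bm p}|^2\Bigr)^{1/2}\leq \|g\|\Bigl(\sum_{|\bm p|>|\bm q|/2}C_\alpha^2|\bm p|^{-2\alpha}\Bigr)^{1/2},
\]
and estimate the tail $\sum_{|\bm p|>|\bm q|/2}|\bm p|^{-2\alpha}$, which decays as a negative power of $|\bm q|$ determined by $\alpha$; under the assumption $\alpha>1/2$ this yields the claimed $\alpha-1/2$ contribution.

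For \eqref{330}, I would insert the bound $|\langle \bm p/|\bm p|,\bm d_j(\bm q)\rangle|\leq \min(|\bm p|,|\bm q|)^{-1}|\bm p-\bm q|$ from Lemma~\ref{innerpro} (with $\bm p$ and $\bm q$ swapped). On the low region, $\min(|\bm p|,|\bm q|)=|\bm p|$ and $|\bm p-\bm q|\leq C|\bm q|$, so the extra factor is absorbed without changing the $m-n$ rate. On the high region the extra factor is $|\bm p-\bm q|/|\bm p|$, and a weighted Cauchy--Schwarz
\[
\sum_{|\bm p|>|\bm q|/2}|\hat g_{\bm q-\bm p}||u_{\bm p}|\frac{|\bm p-\bm q|}{|\bm p|}\leq\Bigl(\sum_{\bm r}|\bm r|^{2}|\hat g_{\bm r}|^2\Bigr)^{1/2}\Bigl(\sum_{|\bm p|>|\bm q|/2}|u_{\bm p}|^2|\bm p|^{-2}\Bigr)^{1/2}
\]
has its first factor bounded by a constant multiple of $\|\nabla g\|$ (using Parseval in the form $\sum|\bm r|^2|\hat g_{\bm r}|^2\leq C\|\nabla g\|^2$), while the weight $|\bm p|^{-2}$ sharpens the second factor by one additional power of $|\bm q|^{-1}$ compared with the corresponding tail in \eqref{329}. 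This is what upgrades the tail rate from $\alpha-1/2$ to $\alpha+1/2$.

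The most delicate point I anticipate is obtaining the precise $\pm 1/2$ shift in the decay rates: it relies on setting the dyadic cutoff exactly at $|\bm p|\sim|\bm q|/2$, on pairing the factor $|\bm p-\bm q|$ extracted from Lemma~\ref{innerpro} with $|\hat g_{\bm q-\bm p}|$ to trade $\|g\|$ for $\|\nabla g\|$, and on carefully estimating the $\ell^2$ tail of $|\bm p|^{-2\alpha}$ over the $n$-dimensional lattice $\mathbb Z[{\rm col}(\mathbf P)]$. A secondary technical point is that the constants $C_{\gamma_1},C_{\gamma_2}$ must be tracked so as to depend precisely on $m,n,\|\mathbf P\|,|G|_{m,\rm per}$ and $\|g\|$ (respectively $\|\nabla g\|$); this requires being explicit about the comparison between $|\mathbf P\bm k|$ and $|\bm k|$ each time Lemma~\ref{lm: decay} or Parseval is invoked.
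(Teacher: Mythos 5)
Your two-region split at $|\bm p|\sim|\bm q|/2$ is essentially the paper's own decomposition (the paper phrases the cutoff as $|\phi^{-1}(\bm q-\bm p)|\gtrless\|\mathbf P\|^{-1}|\bm q|/2$, which is equivalent up to constants), and your mechanism for \eqref{330} --- pairing the factor $|\bm p-\bm q|$ from Lemma \ref{innerpro} with $\hat g_{\bm q-\bm p}$ to trade $\|g\|$ for $\|\nabla g\|$ while keeping the extra $|\bm q|^{-1}$ --- is exactly the paper's. The gap is in the high-frequency region. First, the tail $\sum_{|\bm p|>|\bm q|/2}|\bm p|^{-2\alpha}$ taken over $\mathbb Z[{\rm col}(\mathbf P)]$ is $+\infty$ for every $\alpha$: since $d<n$, the frequency set $\mathbf P\mathbb Z^n$ is a dense countable subgroup of $\mathbb R^d$, so every annulus $\{R<|\bm p|<2R\}$ contains infinitely many of its points, each contributing at least $(2R)^{-2\alpha}$. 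Summability over this set can only be controlled through the $n$-dimensional index $\bm k=\phi^{-1}(\cdot)$, never through $|\bm p|$ alone. The same issue invalidates your ``lattice-point count $O(|\bm q|^n)$'' for the low region $\{|\bm p|\le|\bm q|/2\}$, which is likewise infinite; that part is rescued by instead summing $\sum_{|\bm k|>c|\bm q|}|\bm k|^{-m}\lesssim|\bm q|^{n-m}$ in $\mathbb Z^n$, as the paper does, so the $m-n$ clause survives.

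Second, and more fundamentally, plain Cauchy--Schwarz cannot produce the exponent $\alpha-1/2$. Even after replacing your high region by the correct finite index set --- the paper's $\Gamma_2$, a ball of $O(|\bm q|^n)$ points in periodic coordinates, all with $|\bm p|>|\bm q|/2$ --- the $\ell^2$ bound only gives
\begin{equation*}
\Bigl(\sum_{\bm p\in\Gamma_2}|u_{\bm p}|^2\Bigr)^{1/2}\le\bigl(C|\bm q|^{n}\cdot|\bm q|^{-2\alpha}\bigr)^{1/2}=C|\bm q|^{n/2-\alpha},
\end{equation*}
i.e.\ a rate $\alpha-n/2$, which for $n=6$ is far weaker than $\alpha-1/2$ and would make the bootstrap in Theorem \ref{thm3.3} regress rather than improve. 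The paper's device is H\"older with exponents $\bigl(\tfrac{2n}{2n-1},\,2n\bigr)$, chosen precisely so that the cardinality of $\Gamma_2$ enters only to the power $1/(2n)$:
\begin{equation*}
\Bigl(\sum_{\bm p\in\Gamma_2}|u_{\bm p}|^{2n}\Bigr)^{1/(2n)}\le\bigl(|\Gamma_2|\,|\bm q|^{-2\alpha n}\bigr)^{1/(2n)}\le C|\bm q|^{1/2-\alpha},
\end{equation*}
with the $\hat g$-factor absorbed into $\|g\|$ (respectively $\|\nabla g\|$ for \eqref{330}). This $\ell^{2n}$ interpolation between the pointwise decay of $u_{\bm p}$ and the $O(|\bm q|^n)$ count is the missing ingredient; without it the claimed $\pm1/2$ shifts are not reachable by your argument.
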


\begin{proof}
    One has the decomposition of Eq. \eqref{329}
    \begin{equation}\label{de1}
        \sum_{\bm p\in \mathbb Z[{\rm col}(\mathbf P)]}|\hat{g}_{\bm q-\bm p}u_{\bm p}|=\sum_{\bm p\in \Gamma_1}|\hat{g}_{\bm q-\bm p}u_{\bm p}|+\sum_{\bm p\in \Gamma_2}|\hat{g}_{\bm q-\bm p}u_{\bm p}|,
    \end{equation}
    where $\Gamma_1=\{\bm p|\bm p\in\mathbb Z[{\rm col}(\mathbf{P})],|\phi^{-1}(\bm q-\bm p)|>\|\mathbf P\|^{-1}|\bm q|/2\}$ and $\Gamma_2=\mathbb Z[{\rm col}(\mathbf{P})]/\Gamma_1$. We then consider the summations in two parts, separately.

    $\mathbf {Estimate\ of \ the\ \Gamma_1\ part}$. By the fact that $G(\bm x)\in H_{\rm per}^m(\mathbb T^n)$ and Eq. \eqref{decay}, one obtains that
    \begin{equation}
        \left|\hat{G}_{\phi^{-1}(\bm q-\bm p)}\right|\leq \dfrac{n^{m/2}}{(2\pi)^n}\left|\phi^{-1}(\bm q-\bm p)\right|^{-m}|G|_{m,{\rm per}}.
    \end{equation}
Then the bound
     \begin{equation}
        \begin{split}
            \sum_{\bm p\in\Gamma_1}|\hat{g}_{\bm q-\bm p}||u_{\bm p}|&\leq\sum_{\bm p\in\Gamma_1}|\hat{G}_{\phi^{-1}(\bm q-\bm p)}|\leq \dfrac{n^{m/2}}{(2\pi)^n}|G|_{m,{\rm per}} \sum_{\bm p\in\Gamma_1}|\phi^{-1}(\bm q-\bm p)|^{-m}\\
            &=\dfrac{n^{m/2}}{(2\pi)^n}|G|_{m,{\rm per}}\|\mathbf{P}\|^{-1}\sum_{|\bm k|>|\bm q|/2}|\bm k|^{-m}
        \end{split}
     \end{equation}
     holds, where $\bm k\in\mathbb Z^n$. Then by the fact that
     \begin{equation}
        \sum_{|\bm k|>|\bm q|/2}|\bm k|^{-m}\leq\int_{|\bm t|>|\bm q|/2}|\bm t|^{-m}d\bm t=2\pi\cdot2^{n-2}\int_{|\bm q|/2}^{+\infty}|\bm t|^{n-m-1}d|\bm t|=\dfrac{2^{m-1}\pi}{m-n}|\bm q|^{n-m},
    \end{equation}
    one has
    \begin{equation}\label{part1}
        \sum_{\bm p\in\Gamma_1}|\hat{g}_{\bm q-\bm p}||u_{\bm p}|\leq C_1|\bm q|^{n-m},
    \end{equation}
    where $C_1$ is a positive constant depending on $|G|_{m,{\rm per}},\|\mathbf{P}\|^{-1},m$ and $n$.

    $\mathbf {Estimate\ of \ the\ \Gamma_2\ part}$. By the H\"older's inequality with $1/\beta+1/2n=1$, 
    \begin{equation}
    \begin{split}
        \sum_{\bm p\in\Gamma_2}|\hat{g}_{\bm q-\bm p}||u_{\bm p}|&\leq\left(\sum_{\bm p\in\Gamma_2}|\hat{g}_{\bm q-\bm p}|^2\right)^{\frac{1}{\beta}}\left(\sum_{\bm p\in\Gamma_2}|\hat{g}_{\bm q-\bm p}|^{\frac{2\beta n-4n}{\beta}}|u_{\bm p}|^{2n}\right)^{\frac{1}{2n}}\leq \\
        &\max_{\bm p\in \Gamma_2}|\hat{g}_{\bm q-\bm p}|\cdot\|g\|^{\frac{2}{\beta}}\left(\sum_{\bm p\in\Gamma_2}|u_{\bm p}|^{2n}\right)^{\frac{1}{2n}}\leq \|g\|^{\frac{2}{\beta}+\frac{1}{2}}\left(\sum_{\bm p\in\Gamma_2}|u_{\bm p}|^{2n}\right)^{\frac{1}{2n}}.
    \end{split}
    \end{equation}
    For any $\bm p\in\Gamma_2$, one has 
    \begin{equation}
        \|\mathbf{P}\|^{-1}|\bm q|/2\geq|\phi^{-1}(\bm q-\bm p)|\geq \|\mathbf{P}\|^{-1}|\bm q-\bm p|,
    \end{equation}
    hence $|\bm p|>|\bm q|/2$. One can assume that the decay rate of $|u_{\bm q}|$ with $|\bm q|$ is $\alpha$, that is, there exists a positive constant $C_{\alpha}$ independent with $\bm q$ such that $|u_{\bm{q}}|\leq C_{\alpha}|\bm q|^{-\alpha}$. Then there exists a positive constant $C_2$ depending on $n$ and such that
    \begin{equation}\label{eq338}
        \sum_{\bm p\in\Gamma_2}|u_{\bm p}|^{2n}\leq C_{\alpha}\sum_{\bm p\in\Gamma_2}|\bm p|^{-\alpha n}\leq C_{\alpha}|\Gamma_2||\bm q/2|^{-2\alpha n}\leq C_2 |\bm q|^{n-2\alpha n}.
    \end{equation}
    Therefore, there exists a positive constant $C_3$ depending on $n$ and $\|g\|$ such that
    \begin{equation}\label{part2}
        \sum_{\bm p\in\Gamma_2}|\hat{g}_{\bm q-\bm p}||u_{\bm p}|\leq C_3|\bm q|^{-\alpha+1/2}.
    \end{equation}
    Combining Eqs. \eqref{de1},\eqref{part1} and \eqref{part2}, Eq. \eqref{329} is proved.
      On the other hand, Eq. \eqref{330} has the same decomposition as Eq. \eqref{de1}. The $\Gamma_1$ part is bounded as the same way, then there exists a positive constant $C_4$ depending on $|G|_{m,{\rm per}},\|\mathbf{P}\|^{-1},m$ and $n$ such that
      \begin{equation}
          \sum_{\bm p\in\Gamma_1}|\hat{g}_{\bm q-\bm p}||u_{\bm p}\langle\bm p/|\bm p|,\bm d_j(\bm q)\rangle|\leq C_4|\bm q|^{n-m}.
      \end{equation}
For the $\Gamma_2$ part, using Lemma \ref{innerpro}, 
      \begin{equation}
          \sum_{\bm p\in\Gamma_2}|\hat{g}_{\bm q-\bm p}||u_{\bm p}\langle\bm p/|\bm p|,\bm d_j(\bm q)\rangle|\leq |\bm q|^{-1}\sum_{\bm p\in\Gamma_2}|\hat{g}_{\bm q-\bm p}||\bm q-\bm p||u_{\bm p}|,
      \end{equation}
By the H\"older's inequality with $1/\beta+1/2n=1$ and Eq. \eqref{eq338}, one has
      \begin{equation}
          \sum_{\bm p\in\Gamma_2}|\hat{g}_{\bm q-\bm p}||u_{\bm p}\langle\bm p/|\bm p|,\bm d_j(\bm q)\rangle|\leq |\bm q|^{-1}\|\nabla g\|^{\frac{2}{\beta}+\frac{1}{2}}\left(\sum_{\bm p\in\Gamma_2}|u_{\bm p}|^{2n}\right)^{\frac{1}{2n}}\leq C_5|\bm q|^{-\alpha-1/2},
      \end{equation}
where $C_5$ is a positive constant depending on $n$ and $\|\nabla g\|$. Then Eq. \eqref{330} is proved.
\end{proof}

Then we introduce the main result of the decay rate of the generalized Fourier coefficients in Theorem \ref{thm3.3}.

\begin{theorem}\label{thm3.3}
    Let $\bm u(\bm z)$ be the eigenfunction of the quasiperiodic Maxwell eigenvalue problem \eqref{curlcurl} corresponding to $\lambda$, with $\hat{u}^j_{\bm q}$ being its Fourier coefficient under the divergence-free basis, and $\varepsilon(\bm z)$ and $\mathcal{P}[\varepsilon](\bm x)$ being the quasiperiodic electric permittivity function and its parent function, respectively. Assume $\mathcal{P}[\varepsilon]\in H_{\rm per}^m(\mathbb T^n)$, $(m\in\mathbb{Z}, \,m\geq 0)$. For any $\alpha\leq m-n$, there exists a constant $C_{\alpha}$ such that
    \begin{equation}
    |\hat{u}^{j}_{\bm{q}}|\leq C_{\alpha}|\bm q|^{-\alpha},
\end{equation}
where $C_{\alpha}$ depends on $m,n,\|\mathbf P\|,|\mathcal{P}[\varepsilon]|_{m,\rm per},\|\nabla\varepsilon\|,|(\mathcal{P}[\varepsilon])^{-1}|_{m,\rm per}$ and $\|\nabla(\varepsilon^{-1})\|$.
\end{theorem}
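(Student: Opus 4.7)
The plan is to set up a bootstrap argument that gains \(1/2\) in the decay exponent per iteration, starting from the coarse estimate \(|\hat u^j_{\bm q}|\le M\) (a constant, from Parseval applied to \(\bm u\in L^2_{\rm qp}\)) and saturating at the cap \(\alpha=m-n\) that Lemma \ref{lm: decay} imposes on the Fourier coefficients of \(\mathcal P[\varepsilon]\in H^m_{\rm per}(\mathbb T^n)\). The driving identity is the Fourier-side eigenvalue equation. Writing \(\bm f=\varepsilon^{-1}\bm u\), the relation \(\nabla\times\nabla\times\bm f=\lambda\bm u\) becomes \(|\bm q|^2\hat{\bm f}_{\bm q}-\bm q(\bm q\cdot\hat{\bm f}_{\bm q})=\lambda\hat{\bm u}_{\bm q}\), so pairing with \(\bm d_j(\bm q)\) fixes \(\langle\hat{\bm f}_{\bm q},\bm d_j(\bm q)\rangle=\lambda\hat u^j_{\bm q}/|\bm q|^2\). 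Inserting \(\bm u=\varepsilon\bm f\) into \(\hat u^j_{\bm q}=\sum_{\bm p}\hat\varepsilon_{\bm q-\bm p}\langle\hat{\bm f}_{\bm p},\bm d_j(\bm q)\rangle\) and using the orthonormal decomposition \(\hat{\bm f}_{\bm p}=\hat f^1_{\bm p}\bm d_1(\bm p)+\hat f^2_{\bm p}\bm d_2(\bm p)+\hat f^3_{\bm p}\bm p/|\bm p|\) with \(\hat f^k_{\bm p}=\lambda\hat u^k_{\bm p}/|\bm p|^2\) for \(k=1,2\) gives the master identity
\begin{equation*}
\hat u^j_{\bm q}=\sum_{\bm p}\hat\varepsilon_{\bm q-\bm p}\!\left[\frac{\lambda}{|\bm p|^2}\sum_{k=1,2}\hat u^k_{\bm p}\langle\bm d_k(\bm p),\bm d_j(\bm q)\rangle+\hat f^3_{\bm p}\langle\bm p/|\bm p|,\bm d_j(\bm q)\rangle\right].
\end{equation*}

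The inductive step is then a direct application of Lemma \ref{sumdecay}. Suppose \(|\hat u^j_{\bm p}|\le C_\alpha|\bm p|^{-\alpha}\) with \(\alpha>1/2\) and an auxiliary bound \(|\hat f^3_{\bm p}|\le C|\bm p|^{-\alpha}\) at the same rate. In the first sum the extra \(|\bm p|^{-2}\) factor upgrades the effective decay to \(\alpha+2\), and the trivial estimate \(|\langle\bm d_k(\bm p),\bm d_j(\bm q)\rangle|\le1\) lets Lemma \ref{sumdecay} (first bound) control this term by \(|\bm q|^{-\min(m-n,\alpha+3/2)}\). In the second sum, the factor \(\langle\bm p/|\bm p|,\bm d_j(\bm q)\rangle\) is exactly of the form required by Lemma \ref{sumdecay} (second bound)---and Lemma \ref{innerpro} provides the quantitative estimate \(|\langle\bm p/|\bm p|,\bm d_j(\bm q)\rangle|\le\min(|\bm p|,|\bm q|)^{-1}|\bm p-\bm q|\) that its proof hinges on---so that sum is controlled by \(|\bm q|^{-\min(m-n,\alpha+1/2)}\). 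Combining, \(|\hat u^j_{\bm q}|\le C'|\bm q|^{-\min(m-n,\alpha+1/2)}\), a gain of \(1/2\) per iteration that saturates at \(\alpha=m-n\) after finitely many rounds.

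Two points need attention and constitute the main obstacle. First, I must feed the induction with the decay of \(\hat f^3_{\bm p}\) at the same rate as \(\hat u^k_{\bm p}\). Since \(\hat f^3\) is not controlled by the eigenvalue equation, I invoke the divergence-free constraint \(\nabla\cdot(\varepsilon\bm f)=0\); in Fourier space this reads \(\sum_{\bm r}\hat\varepsilon_{\bm p-\bm r}(\bm p\cdot\hat{\bm f}_{\bm r})=0\), and isolating \(\bm r=\bm p\) and rewriting \((\bm p-\bm r)\hat\varepsilon_{\bm p-\bm r}=-i\widehat{(\nabla\varepsilon)}_{\bm p-\bm r}\) produces a recursion for \(\hat f^3_{\bm p}\) in terms of \(\hat u^k_{\bm r}\) and \(\nabla\varepsilon\); reapplying Lemma \ref{sumdecay} to this recursion---which explains why \(\|\nabla\varepsilon\|_{\rm qp}<\infty\) appears among the constants controlling \(C_\alpha\)---propagates the current decay rate from \(\hat u\) to \(\hat f^3\). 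Second, the bootstrap must be launched from the raw bound \(|\hat u^j_{\bm p}|\le M\), which does not meet the threshold \(\alpha>1/2\) of Lemma \ref{sumdecay}; however, the \(|\bm p|^{-2}\) sitting inside the first sum already satisfies that threshold on its own, so a single pass with the crudest input produces a first genuine decay of order \(\min(m-n,3/2)\), after which the paragraph above takes over. I expect the simultaneous upgrade of the \(\hat u\) and \(\hat f^3\) decay rates at each step to be the most delicate part of the argument.
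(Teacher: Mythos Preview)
Your bootstrap framework is essentially the paper's: derive $|\bm q|^2\hat f^j_{\bm q}=\lambda\hat u^j_{\bm q}$ for $j=1,2$, expand $\hat u^j_{\bm q}$ via $\bm u=\varepsilon\bm f$ as a convolution in the three components $\hat f^1,\hat f^2,\hat f^3$, and iterate Lemma~\ref{sumdecay} until saturation at $m-n$. The starting-point issue and the role of Lemma~\ref{innerpro} are correctly identified.

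The gap is your treatment of $\hat f^3$. Writing $\nabla\cdot(\varepsilon\bm f)=0$ in Fourier space and ``isolating $\bm r=\bm p$'' gives
\[
\hat\varepsilon_{\bm 0}\,|\bm p|\,\hat f^3_{\bm p}=-\sum_{\bm r\ne\bm p}\hat\varepsilon_{\bm p-\bm r}\,|\bm r|\,\hat f^3_{\bm r}+i\sum_{\bm r}\widehat{\nabla\varepsilon}_{\bm p-\bm r}\cdot\hat{\bm f}_{\bm r},
\]
which is \emph{implicit}: the right-hand side still carries $|\bm r|\hat f^3_{\bm r}$. Applying Lemma~\ref{sumdecay} to that term (a sequence decaying like $|\bm r|^{-(\alpha-1)}$) and dividing by $|\bm p|$ returns only $|\bm p|^{-\min(m-n+1,\alpha-1/2)}$, a \emph{loss} of $1/2$, so this relation cannot propagate the decay of $\hat f^3$ as you need. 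This is also why your route touches only $\nabla\varepsilon$ and never explains the dependence of $C_\alpha$ on $|(\mathcal P[\varepsilon])^{-1}|_{m,\rm per}$ and $\|\nabla(\varepsilon^{-1})\|$ stated in the theorem.

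The paper's fix is to use the \emph{other} product relation, $\bm f=\varepsilon^{-1}\bm u$, directly. Since $\bm u$ is divergence-free, $\hat{\bm u}_{\bm p}=\hat u^1_{\bm p}\bm d_1(\bm p)+\hat u^2_{\bm p}\bm d_2(\bm p)$ has no longitudinal part, so
\[
\hat f^3_{\bm q}=\sum_{\bm p}(\widehat{\varepsilon^{-1}})_{\bm q-\bm p}\Big[\hat u^1_{\bm p}\langle\bm d_1(\bm p),\bm q/|\bm q|\rangle+\hat u^2_{\bm p}\langle\bm d_2(\bm p),\bm q/|\bm q|\rangle\Big]
\]
is an \emph{explicit} formula in $\hat u^k$ alone, to which the second bound of Lemma~\ref{sumdecay} (now with $g=\varepsilon^{-1}$, bringing in $|(\mathcal P[\varepsilon])^{-1}|_{m,\rm per}$ and $\|\nabla(\varepsilon^{-1})\|$) applies immediately. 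With this substitution your inductive step closes; the paper in fact runs the bootstrap on $\hat f^{1,2}$ rather than on $\hat u^{1,2}$ and gains a full power per step, but that is cosmetic.
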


\begin{proof}
    Without loss of generality, let us set $\|\bm u\|_{\rm qp}=1$.   Define
    \begin{equation}
        \bm f(\bm{z})=\varepsilon^{-1}(\bm z)\bm u(\bm z)=\sum_{\bm{q}\in\mathbb Z[{\rm col}(\mathbf{P})]}\hat{\bm f}_{\bm{q}}e^{\mathrm{i}\langle \bm{q},\bm{z}\rangle}.
    \end{equation}
    By the Helmhotlz decomposition, there exists $\bm f_{\rm div0}\in\bm V_{\rm qp}^{\rm div0}$ and a function $g$ such that $\bm f=\bm f_{\rm div0}+\nabla g$. Then one has
    \begin{equation}\label{helm}
        \nabla\times\nabla\times\bm f=\nabla\times\nabla\times(\bm f_{\rm div0}+\nabla g)=\nabla\times\nabla\times\bm f_{\rm div0}=-\Delta \bm f_{\rm div0}.
    \end{equation}
    
    Because $\bm f_{\rm div0},\bm u\in\bm V_{\rm qp}^{\rm div0}$, one has their series under the divergence-free basis,
    \begin{equation}
    \bm f_{\rm div0}=\sum_{\bm q\in \mathbb Z[{\rm col}(\mathbf P)]}(\hat{f}^1_{\bm q}\bm{\psi}_{\bm q}^1(\bm z)+\hat{f}^2_{\bm q}\bm{\psi}_{\bm q}^2(\bm z))
    \end{equation}
    and
    \begin{equation}
    \bm u=\sum_{\bm q\in \mathbb Z[{\rm col}(\mathbf P)]}(\hat{u}^1_{\bm q}\bm{\psi}_{\bm q}^1(\bm z)+\hat{u}^2_{\bm q}\bm{\psi}_{\bm q}^2(\bm z)).
    \end{equation}
    It should be noted that $\hat{f}^j_{\bm q}=\langle\hat{\bm f}_{\bm q},\bm d_j(\bm q)\rangle,j=1,2$. For convenience, one defines $\hat{f}^3_{\bm q}=\langle\hat{\bm f}_{\bm q},\bm q/|\bm q|\rangle$. Then by Eq. \eqref{helm} and the orthogonality of  $\{e^{\mathrm{i}\langle \bm{q},\bm{z}\rangle}\}$, one obtains from the weak form that for $j=1,2$,
\begin{equation}\label{344}
        |\bm q|^2\hat{f}^{j}_{\bm q}=\lambda \hat{u}^{j}_{\bm q},\quad \bm q\in \mathbb Z[{\rm col}(\mathbf P)].
    \end{equation}
By the fact that $\|\bm u\|_{\rm qp}=1$, one has $|\hat{u}^{j}_{\bm q}|\leq1$, which implies that
\begin{equation}\label{eq348}
        |\hat{f}^{j}_{\bm q}|\leq \lambda|\bm q|^{-2},\;\;j=1,2.
    \end{equation}
For $\hat{u}^{j}_{\bm q},j=1,2$, one has the expansion
    \begin{equation}
    \begin{split}
        \hat{u}^{j}_{\bm q}&=\langle\hat{\bm u}_{\bm q},\bm d_j(\bm q)\rangle=\sum_{\bm p\in \mathbb Z[{\rm col}(\mathbf P)]}\hat{\varepsilon}_{\bm q-\bm p}\langle\hat{\bm F}_{\bm k},\bm d_j(\bm q)\rangle\\
        &=\sum_{\bm p\in \mathbb Z[{\rm col}(\mathbf P)]}\hat{\varepsilon}_{\bm q-\bm p}(\hat{f}^{1}_{\bm p}\langle\bm d_1(\bm p),\bm d_j(\bm q)\rangle+\hat{f}^{2}_{\bm p}\langle\bm d_2(\bm p),\bm d_j(\bm q)\rangle+\hat{f}^{3}_{\bm p}\langle\bm p/|\bm p|,\bm d_j(\bm q)\rangle),
    \end{split}
    \end{equation}
    where $\hat{\varepsilon}_{\bm p}$ is the $\bm p$th Fourier coefficient of $\varepsilon(\bm z)$. Therefore, for $j=1,2$
    \begin{equation}\label{346}
        |\bm q|^2|\hat{f}^{j}_{\bm q}|\leq \sum_{\bm p\in \mathbb Z[{\rm col}(\mathbf P)]}|\hat{\varepsilon}_{\bm q-\bm p}\hat{f}^{1}_{\bm p}|+|\hat{\varepsilon}_{\bm q-\bm p}\hat{f}^{2}_{\bm p}|+|\hat{\varepsilon}_{\bm q-\bm p}\hat{f}^{3}_{\bm p}\langle\bm p/|\bm p|,\bm d_j(\bm q)\rangle|.
    \end{equation}
 Because of Eq. \eqref{eq348}, one can assume that the decay rate of $|\hat{f}^{j}_{\bm q}|$ is $\alpha,\;j=1,2$, that is, there exists a positive constant $C_{\alpha}$ independent with $\bm q$ and $j$ such that $|\hat{f}^{j}_{\bm q}|\leq C_{\alpha}|\bm q|^{-\alpha}$ for both $j=1$ and $2$. By Lemma \ref{sumdecay}, there exists a positive constant $C_1$ depends on $m,n,\|\mathbf P\|,|\mathcal{E}|_{m,\rm per}$ and $\|\hat{\varepsilon}\|$, such that
\begin{equation}
        \sum_{\bm p\in \mathbb Z[{\rm col}(\mathbf P)]}|\hat{\varepsilon}_{\bm q-\bm p}\hat{f}^{1}_{\bm p}|+|\hat{\varepsilon}_{\bm q-\bm p}\hat{f}^{2}_{\bm p}|\leq C_1|\bm q|^{-\gamma_1},
    \end{equation}
    where $\gamma_1=\min(m-n,\alpha-1/2)$. On the other hand, by $\bm f=\varepsilon^{-1}\bm u$, $\hat{f}^{3}_{\bm q}=\langle\hat{\bm f}_{\bm q},\bm q/|\bm q|\rangle$ has the decomposition
    \begin{equation}
        \begin{split}
            \hat{f}^{3}_{\bm q}&=\sum_{\bm p\in \mathbb Z[{\rm col}(\mathbf P)]}(\hat{\varepsilon}^{-1})_{\bm q-\bm p}\langle\bm u_{\bm p},\bm q/|\bm q|\rangle\\
            &=\sum_{\bm p\in \mathbb Z[{\rm col}(\mathbf P)]}(\hat{\varepsilon}^{-1})_{\bm q-\bm p}[\hat{u}^{1}_{\bm p}\langle \bm d_1(\bm p),\bm q/|\bm q|\rangle+\hat{u}^{2}_{\bm p}\langle \bm d_2(\bm p),\bm q/|\bm q|\rangle],
        \end{split}
    \end{equation}
    where $(\hat{\varepsilon}^{-1})_{\bm q-\bm p}$ denotes the generalized Fourier coefficient of $\hat{\varepsilon}^{-1}(\bm z)$ with frequency $\bm q-\bm p$. Then by Eq. \eqref{344},
    \begin{equation}
        \hat{f}^{3}_{\bm q}=\sum_{\bm p\in \mathbb Z[{\rm col}(\mathbf P)]}(\hat{\varepsilon}^{-1})_{\bm q-\bm p}|\bm p|^2[\hat{f}^{1}_{\bm p}\langle \bm d_1(\bm p),\bm q/|\bm q|\rangle+\hat{f}^{2}_{\bm p}\langle \bm d_2(\bm p),\bm q/|\bm q|\rangle].
    \end{equation}
    By the assumption that $\varepsilon$ is two-sided bounded, $\mathcal{P}[\varepsilon]\in H_{\rm per}^m(\mathbb T^n)$ infers to $\mathcal{E}^{-1}(\bm x)\in H_{\rm per}^m(\mathbb T^n)$. Then by Lemma \ref{sumdecay}, there exists a positive constant $C_2$ depending on $m,n,\|\mathbf P\|$,\\$|(\mathcal{P}[\varepsilon])^{-1}|_{m,\rm per}$ and $\|\nabla(\varepsilon^{-1})\|$, such that
    \begin{equation}
        |\hat{f}^{3}_{\bm q}|\leq C_2|\bm q|^{-\gamma_2}
    \end{equation}
    where $\gamma_2=\min(m-n,\alpha-3/2)$. Then by Lemma \ref{sumdecay}, there exists a positive constant $C_3$ depends on $m,n,\|\mathbf P\|,|\mathcal{P}[\varepsilon]|_{m,\rm per},\|\nabla\varepsilon\|,|(\mathcal{P}[\varepsilon])^{-1}|_{m,\rm per}$ and $\|\nabla(\varepsilon^{-1})\|$, such that
    \begin{equation}
        \sum_{\bm p\in \mathbb Z[{\rm col}(\mathbf P)]}|\hat{\varepsilon}_{\bm q-\bm p}\hat{f}^{3}_{\bm p}\langle\bm p/|\bm p|,\bm d_j(\bm q)\rangle|\leq C_3|\bm q|^{-\gamma_3},
    \end{equation}
    where $\gamma_3=\min(m-n,\gamma_2+1/2)=\min(m-n,\alpha-1)$. Then by Eq. \eqref{346}, one has $\alpha>m-n+2$. By Eq. \eqref{eq348}, $|\hat{u}^{j}_{\bm q}|$ decays with an order of $m-n$, which ends the proof.
\end{proof}

The DOF of the PM on a divergence-free basis is $O(N^n)$, where $n$ is the number of columns of the projection matrix $\mathbf P$. This means that when using Krylov subspace methods \cite{liesen2013krylov,kelley1995iterative,saad2003iterative} to solve for eigenvalues, the required storage space and computation time become prohibitively large, making it infeasible to compute results for larger $N$. This undoubtedly limits the achievable accuracy.

Based on Theorem \ref{thm2}, we can propose a reduced scheme, divergence-free reduced projection method (DF-RPM), for solving quasiperiodic Maxwell eigenvalue problems, which significantly alleviates the ``curse of dimensionality". Compared to directly applying the PM on a divergence-free basis, the DF-RPM significantly reduces the DOF and computational space while maintaining accuracy. This makes it feasible to compute more spectral points for three-dimensional problems. The specific numerical scheme is detailed in the following sections.

\subsection{Divergence-free reduced projection method}
By the fact that the Fourier series in terms of the divergence-free basis of the eigenvalue problem exhibits a certain decay rate, the truncated quasiperiodic Fourier index sets $\mathbb{I}_{{\rm per},R}$ and $\mathbb{I}_{{\rm qp},R}$ are given by
\begin{align}\label{eq: indexqpr}
&\mathbb{I}_{{\rm qp},R}= \big\{{\bm q\in \mathbb Z[{\rm col}(\mathbf{P})] \big| \;\|\phi^{-1}(\bm q)\|_{\infty}\leq N,\|\bm q\|_{\infty}\leq M}   \big\},\\
&\mathbb{I}_{{\rm per},R}=\big\{\bm{k}= \mathbb Z^n \big| \|\bm k\|_{\infty}\leq N,\|\mathbf P\bm k\|_{\infty}\leq M \big\}.
\end{align}
Then the corresponding reduced divergence-free spaces read
\begin{align}
    &\bm{V}_{R,{\rm qp}}^{\rm div0}={\rm span}\big\{ \bm{\psi}_{\bm q}^j(\bm z):={\bm d}_j({\bm q})e^{{\rm i}\langle\bm q,\bm z\rangle},\;j=1,2 \big| \bm{q}\in  \mathbb{I}_{{\rm qp},R} \big\},\\
    &\bm{V}_{R,{\rm per}}^{\rm div0}= {\rm span}\big\{ \bm{\Psi}_{\bm k}^j(\bm x)=\bm{d}_j(\mathbf{P}\bm k)e^{{\rm i}\langle\bm k,\bm x\rangle},\;j=1,2  \;\big| \bm{k}\in  \mathbb{I}_{{\rm per},R}  \big\}.
\end{align}

In what follows, we first give a rigorous estimate of the truncation error of the reduced space for quasiperiodic functions. For a quasiperiodic vector function $\bm f$ and its parent function $\bm F=\mathcal{P}[\bm f]$, the operators $\Pi_{\rm qp}$ and $\Pi_{\rm per}$ defined in Section \ref{ss3} denote the partial sums 
\begin{equation}
    \Pi_{\rm qp}\bm f=\textstyle \sum_{\bm q\in\mathbb{I}_{{\rm qp},N}}\hat{\bm f}_{\bm q}e^{{\rm i}\langle \bm q,\bm z\rangle}=\Pi_{\rm per}\bm F=\sum_{\bm k\in\mathbb{I}_{{\rm per},N} }\hat{\bm F}_{\bm k}e^{{\rm i}\langle \bm k,\bm x\rangle}.
\end{equation}
Similarly we define the corresponding operator for the DF-RPM,
\begin{equation}
   \Pi_{\rm qp}^R\bm f=\textstyle\sum_{\bm q\in\mathbb{I}_{{\rm qp},R}}\hat{\bm f}_{\bm q}e^{{\rm i}\langle \bm q,\bm z\rangle}=\Pi_{\rm per}^R\bm F=\sum_{\bm k\in\mathbb{I}_{{\rm per},R}}\hat{\bm F}_{\bm k}e^{{\rm i}\langle \bm k,\bm x\rangle}.
\end{equation}
Specially, when $\bm f\in\bm V_{\rm qp}^{\rm div0}$,
\begin{equation}
    \Pi_{\rm qp}\bm f=\textstyle\sum_{\bm q\in\mathbb{I}_{{\rm qp},N}}[\hat{f}^{1}_{\bm q}\bm{\psi}_{\bm q}^1(\bm z)+\hat{f}^{2}_{\bm q}\bm{\psi}_{\bm q}^2(\bm z)]=\Pi_{\rm per}\bm F=\sum_{\bm k\in\mathbb{I}_{{\rm per},N}}[\hat{F}^{1}_{\bm k}\bm{\Psi}_{\bm k}^1(\bm x)+\hat{F}^{2}_{\bm k}\bm{\Psi}_{\bm k}^2(\bm x)]
\end{equation}
and
\begin{equation}
    \Pi_{\rm qp}^R\bm f=\textstyle\sum_{\bm q\in\mathbb{I}_{{\rm qp},R}}[\hat{f}^{1}_{\bm q}\bm{\psi}_{\bm q}^1(\bm z)+\hat{f}^{2}_{\bm q}\bm{\psi}_{\bm q}^2(\bm z)]=\Pi^R_{\rm per}\bm F=\sum_{\bm k\in\mathbb{I}_{{\rm per},R}}[\hat{F}^{1}_{\bm k}\bm{\Psi}_{\bm k}^1(\bm x)+\hat{F}^{2}_{\bm k}\bm{\Psi}_{\bm k}^2(\bm x)].
\end{equation}
The truncation error under $L^2$-norm of the operator is bounded in Theorem \ref{l2b}.

\begin{theorem}\label{l2b}
    Suppose that $\bm u$ is a quasiperiodic vector function and $\bm U=\mathcal{P}[\bm u]$. If $\bm U\in H_{\rm per}^m(\mathbb T^n)^3,\bm u\in H_{\rm qp}^{m'}(\mathbb R^3)^3$ with $m,m'\in\mathbb Z^+$ and $0<m\leq m'$, then
    \begin{equation}
        \|\Pi_{\rm qp}^R\bm u-\bm u\|_{\rm qp}\leq M^{-m'}|\bm u|_{m', \rm qp}+N^{-m}|\bm U|_{m, \rm per}.
    \end{equation}
\end{theorem}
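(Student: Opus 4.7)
The plan is to argue directly through Parseval's equality (Lemma \ref{thm2}) applied to both the quasiperiodic and periodic representations of $\bm u$, combined with the one-to-one correspondence between Fourier coefficients given in Theorem \ref{cosis}. The main point is that the reduced index set $\mathbb{I}_{{\rm qp},R}$ is the \emph{intersection} of two constraints ($\|\phi^{-1}(\bm q)\|_\infty \leq N$ and $\|\bm q\|_\infty \leq M$), so its complement is contained in the \emph{union} of the two complementary sets, and each of those can be bounded by a different semi-norm.

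First I would write $\|\Pi_{\rm qp}^R\bm u - \bm u\|_{\rm qp}^2 = \sum_{\bm q\notin \mathbb{I}_{{\rm qp},R}} |\hat{\bm u}_{\bm q}|^2$ using Parseval, and then apply the union bound
\begin{equation*}
\sum_{\bm q\notin \mathbb{I}_{{\rm qp},R}} |\hat{\bm u}_{\bm q}|^2 \;\leq\; \sum_{\|\phi^{-1}(\bm q)\|_\infty > N} |\hat{\bm u}_{\bm q}|^2 \;+\; \sum_{\|\bm q\|_\infty > M} |\hat{\bm u}_{\bm q}|^2.
\end{equation*}
For the first sum, the bijection $\phi$ of Eq. \eqref{eq: phi} and the coefficient identity $\hat{\bm U}_{\bm k} = \hat{\bm u}_{\phi(\bm k)}$ from Theorem \ref{cosis} convert it into the purely periodic tail $\sum_{\|\bm k\|_\infty > N} |\hat{\bm U}_{\bm k}|^2$; since $\|\bm k\|_\infty > N$ implies $|\bm k| \geq \|\bm k\|_\infty > N$, multiplying and dividing by $|\bm k|^{2m}$ gives the bound $N^{-2m}|\bm U|_{m,\rm per}^2$. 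For the second sum, the analogous argument in the quasiperiodic setting (using $|\bm q| \geq \|\bm q\|_\infty > M$ and the definition of the semi-norm $|\bm u|_{m',\rm qp}^2 = \sum_{\bm q} |\bm q|^{2m'} |\hat{\bm u}_{\bm q}|^2$) yields $M^{-2m'}|\bm u|_{m',\rm qp}^2$.

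Combining the two pieces gives $\|\Pi_{\rm qp}^R\bm u - \bm u\|_{\rm qp}^2 \leq N^{-2m}|\bm U|_{m,\rm per}^2 + M^{-2m'}|\bm u|_{m',\rm qp}^2$, and the inequality $\sqrt{a+b}\leq \sqrt{a}+\sqrt{b}$ delivers the stated bound. The assumption $m \leq m'$ is not used in the argument itself but simply reflects the fact that the quasiperiodic semi-norm typically carries less regularity than its periodic parent in applications.

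There is no significant obstacle here; the proof is essentially a routine Parseval-plus-decay argument. The only subtle point worth emphasizing is the union-bound step: one must be careful not to try to split the sum into two \emph{disjoint} pieces that individually match a single truncation error, because the two constraints defining $\mathbb{I}_{{\rm qp},R}$ are genuinely independent. Bounding the complement by the union of the two ``bad'' sets costs only a harmless factor of two inside the squared norms, which disappears after taking square roots and redistributing.
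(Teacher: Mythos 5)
Your proposal is correct and follows essentially the same route as the paper: both split the tail of the Fourier series into the indices with $\|\bm q\|_\infty>M$ (controlled by $M^{-m'}|\bm u|_{m',\rm qp}$) and those with $\|\phi^{-1}(\bm q)\|_\infty>N$ (controlled by $N^{-m}|\bm U|_{m,\rm per}$ via the coefficient correspondence of Theorem \ref{cosis}). The only cosmetic difference is that the paper realizes this split as a triangle inequality through the intermediate projection $\Pi_{\rm qp}\bm u$, which partitions the complement of $\mathbb{I}_{{\rm qp},R}$ into two disjoint sets, whereas you use a union bound on overlapping index sets followed by $\sqrt{a+b}\leq\sqrt{a}+\sqrt{b}$; both yield the stated estimate.
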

\begin{proof}
    By a direct decomposition $\Pi_{\rm qp}^R\bm u-\bm u=\Pi_{\rm qp}^R\bm u-\Pi_{\rm qp}\bm u+\Pi_{\rm qp}\bm u-\bm u$ and the triangle inequality, one has
    \begin{equation}\label{352}
    \begin{split}
            \|\Pi_{\rm qp}^R\bm u-\bm u\|_{\rm qp}&\leq \|\Pi_{\rm qp}^R\bm u-\Pi_{\rm qp}\bm u\|_{\rm qp}+\|\Pi_{\rm qp}\bm u-\bm u\|_{\rm qp}\\
            &=\|\Pi_{\rm qp}^R\bm u-\Pi_{\rm qp}\bm u\|_{\rm qp}+\|\Pi_{\rm per}\bm U-\bm U\|_{\rm per}.
    \end{split}
    \end{equation}
   By the Parseval's equality,
    \begin{equation}\label{353}
    \begin{split}
        \|\Pi_{\rm qp}^R\bm u-\Pi_{\rm qp}\bm u\|_{\rm qp}^2&=\sum_{\bm q\in\mathbb{I}_{{\rm qp},N}/\mathbb{I}_{{\rm qp},R}}|\hat{\bm u}_{\bm q}|^2\leq M^{-2m'}\sum_{\bm q\in\mathbb{I}_{{\rm qp},N}/\mathbb{I}_{{\rm qp},R}}|\bm q|^{2m'}|\hat{\bm u}_{\bm q}|^2\\&\leq M^{-2m'}|\bm u|^2_{m',\rm qp}
        \end{split}
    \end{equation}
    and
    \begin{equation}\label{354}
        \|\Pi_{\rm per}\bm U-\bm U\|_{\rm per}^2=\sum_{\bm k\in\mathbb Z^n/\mathbb{I}_{{\rm qp},N}}|\hat{\bm U}_{\bm k}|^2\leq N^{-2m}\sum_{\bm k\in\mathbb Z^n/\mathbb{I}_{{\rm qp},N}}|\bm k|^{2m}|\hat{\bm U}_{\bm k}|^2\leq N^{-2m}|\bm U|^2_{m,\rm per}.
    \end{equation}
    Combining Eqs. \eqref{352}, \eqref{353} and \eqref{354} ends the proof. 
\end{proof}

\begin{remark}
    If the parent function $F(\bm x)=\mathcal{P}[f]\in H^m_{\rm per}(\mathbb T^n)^3$, one can use Theorem \ref{cosis} to obtain $f(\bm z)\in H^m_{\rm qp}(\mathbb R^3)^3$. This is because
    \begin{equation}
        \|f(\bm z)\|_{m,\rm qp}^2=\sum_{\bm q\in\mathbb Z[{\rm col}(\mathbf{P})]}(1+|\bm q|^{2m})|\hat{f}_{\bm q}|^2=\sum_{\bm k\in\mathbb{Z}^n}(1+|{\mathbf P}\bm k|^{2m})|\hat{F}_{\bm k}|^2\leq C\|F(\bm x)\|_{m,\rm per},
    \end{equation}
    where $C$ is a positive constant depending on $\|\mathbf P\|$. Hence $m\leq m'$ always holds, and the error decay rate of $M$ is faster than that of $N$. This shows the feasibility of a secondary truncation along the direction of $\mathbf{P}\bm k$. This also implies that, if $\bm u$ and $\bm U$ have sufficiently good regularity, the truncation error can achieve exponential decay to both $N$ and $M$. 
\end{remark}

In the spectral-Galerkin method based on the divergence-free basis, the space of the numerical solution and the test basis space are both $\bm V_{R,\rm per}^{\rm div0}$, the approximate scheme for the weak formulation Eq. \eqref{curlcurlw} reads: find non-trivial $\bm U^R\in\bm V_{R,\rm per}^{\rm div0}$ and $\lambda^R\in \mathbb R$ such that
\begin{equation}
    (\nabla_{\mathcal{p}}\times\bm F^R,\nabla_{\mathcal{p}}\times\bm V)=\lambda(\bm U^R,\bm V) \quad \forall \bm V\in \bm V_{R,\rm per}^{\rm div0},
\end{equation}
where $\bm F^R=(\mathcal{P}[\varepsilon])^{-1}\bm U^R$. The matrices $\mathbf R,\mathbf S,\mathbf C,\bm T$ are defined in the same way of Eqs. \eqref{rstw},\eqref{matrixc} and \eqref{t} with $L$ replaced by $L'$. Here $L'$ is the cardinality of $\mathbb{I}_{{\rm qp},R}$.

For any $\bm U\in \bm V_{R,\rm per}^{\rm div0}$, we denote two vector function $\bm F$ and $\bm K$ as
\begin{equation}
    \bm F=(\mathcal{P}[\varepsilon])^{-1}\bm U,\;\;\bm K=\nabla_{\mathcal{p}}\times\nabla_{\mathcal{p}}\times\bm F.
\end{equation}
By Theorem \ref{solutional}, the pseudo-spectral coefficient column vector of $\bm K$ is
\begin{equation}\label{mvp}
    \bm {\tilde{\bm K}}=\mathbf C^2\left(\begin{bmatrix}
        \mathbf I_{L'\times L'}\\\mathbf 0_{L'\times L'}
    \end{bmatrix}\hat{\mathbf F}\circ\mathbf{R}\mathbf 1_{3\times 1}+\begin{bmatrix}
        \mathbf 0_{L'\times L'}\\\mathbf I_{L'\times L'}
    \end{bmatrix}\hat{\mathbf F}\circ\mathbf{S}\mathbf 1_{3\times 1}\right),
\end{equation}
where the $L'\times3$ matrix $\hat{\mathbf F}$ is the Fourier coefficient matrix of $\bm F$. $\hat{\mathbf F}$ has the form
\begin{equation}\label{fftifft}
    \hat{\mathbf F}=\mathcal{F}[(\mathcal{P}[\varepsilon])^{-1}(\bm x)\cdot\mathcal{F}^{-1}(\begin{bmatrix}
        \mathbf I_{L'\times L'},\mathbf 0_{L'\times L'}
    \end{bmatrix}{\bm {\tilde{U}}}\mathbf 1_{1\times L'}\circ\mathbf R^{\intercal}+\begin{bmatrix}
        \mathbf 0_{L'\times L'},\mathbf I_{L'\times L'}
    \end{bmatrix}{\bm {\tilde{\bm U}}}\mathbf 1_{1\times L'}\circ\mathbf S^{\intercal})],
\end{equation}
where $\mathcal{F}(\cdot)$ and $\mathcal{F}^{-1}(\cdot)$ denote the $n$-dimensional Fourier transform and inverse Fourier transform.In this way, the discrete curl-curl operator of the DF-RPM can also be rewritten into a matrix $\mathbf H$ whose size is $2L'\times2L'$. To calculate the eigenvalues of $\mathbf H$, one takes a random starting column vector $\bm b\in\mathbb{R}^{L'\times1}$, and generates the Krylov subspace $K_G=\hbox{span}\{\bm b,\mathbf H\bm b,\cdots,\mathbf H^{G-1}\bm b\}$. The orthonormal basis $\mathbf Q_G=(\bm q_1,\bm q_2,\cdots,\bm q_G)$ of $K_G$ can be generated by the implicitly restarted Arnoldi method \cite{lehoucq2001implicitly}. One can determine the Hessenberg matrix $\mathbf H_G=\mathbf Q_G^{\intercal}\mathbf H\mathbf Q_G=\mathbf Q_G^{\intercal}(\mathbf H\bm q_1,\mathbf H\bm q_2,\cdots,\mathbf H\bm q_G)$ and solve its eigenpairs $\{(\lambda_j,\bm u_j)\}_{j=1}^G$ of $\mathbf H_G$ by the QR algorithm. Detailed procedures of the DF-RPM are summarized in Algorithm \ref{algorithm2}. 

\begin{algorithm}[h]
	\caption{The reduced projection method based on divergence-free bases (DF-RPM) for Maxwell eigenvalue problems}
	\label{algorithm2}
    \leftline{{\bf Input:}~3D quasiperiodic potential $\varepsilon$, projection matrix $\mathbf P$, parameter $N$, $D$, }
    \leftline{$G$, $\delta$ and $\epsilon$} 
	\leftline{{\bf Output:}~Eigenpairs $\{(\lambda_j,\bm u_j)\}_{j=1}^G$}
	\begin{algorithmic}[1]
            \State Determine the index sets $\mathbb{I}_{{\rm per},R}$ and $\mathbb{I}_{{\rm per},N}$ of basis space and take random starting vector $\bm b\in\mathbb{R}^{2L'\times 1}$ 
            \State Compute $\Vec{\bm R}_j,\Vec{\bm S}_j,j=1,2,3$ and $\Vec{\bm T}$ by Eq. \eqref{t} with $L$ replaced by $L'$ and generate matrices $\mathbf R$ and $\mathbf S$ by Eq. \eqref{rstw}
            \State Compute the discrete curl matrix $\mathbf C$ by Eq. \eqref{matrixc} with $L$ replaced by $L'$
            \State Compute the $L'\times3$ matrix $\hat{\mathbf F}$ with the form
            \begin{equation*}
                \hat{\mathbf F}=\mathcal{F}[(\mathcal{P}[\varepsilon])^{-1}(\bm x)\cdot\mathcal{F}^{-1}(\begin{bmatrix}
        \mathbf I_{L'\times L'},\mathbf 0_{L'\times L'}
    \end{bmatrix}{\bm b}\mathbf 1_{1\times L'}\circ\mathbf R^{\intercal}+\begin{bmatrix}
        \mathbf 0_{L'\times L'},\mathbf I_{L'\times L'}
    \end{bmatrix}{\bm b}\mathbf 1_{1\times L'}\circ\mathbf S^{\intercal})]
            \end{equation*}
            \State Generate the matrix-vector product $\mathbf H\bm b$ by
            \begin{equation*}
                \mathbf H\bm b=\bm {\tilde{\bm K}}=\mathbf C^2\left(\begin{bmatrix}
        \mathbf I_{L'\times L'}\\\mathbf 0_{L'\times L'}
    \end{bmatrix}\hat{\mathbf F}\circ\mathbf{R}\mathbf 1_{3\times 1}+\begin{bmatrix}
        \mathbf 0_{L'\times L'}\\\mathbf I_{L'\times L'}
    \end{bmatrix}\hat{\mathbf F}\circ\mathbf{S}\mathbf 1_{3\times 1}\right)
            \end{equation*}
            \State Repeat step 4-5 to calculate $\mathbf H^2\bm b,\cdots,\mathbf H^{G-1}\bm b$ and generate the Krylov subspace $K_G$ and its orthonormal basis $\mathbf Q_G$ 
            \State Determine Hessenberg matrix $\mathbf H_G$ and compute its eigenpairs $\{(\lambda_j,\bm u_j)\}_{j=1}^G$ of $\mathbf H_G$
            \State Project the eigenfunctions $\bm U_j$ back into the three-dimensional space by $\bm u_j$
	\end{algorithmic}
\end{algorithm}

\section{Numerical examples}\label{s5}
We present numerical results to demonstrate the effectiveness of the above methods. For quasiperiodic Maxwell eigenvalue problems, we use a matrix-free preconditioned Krylov subspace method because it requires only the matrix-vector product to be stored at each iteration \cite{stewart2002krylov}. These examples confirm that the method is free of all spurious modes, has exponential accuracy and is advantageous in the CPU time and memory. The calculations presented in this section are executed using Matlab code on an Intel TM core with a clock rate of 3.80 GHz and 32 GB of memory.

\subsection{Example 1}
We first consider the convergence test for curl-curl problem, which has the manufactured solution $\bm u=\nabla\times\bm w$, where
\begin{equation}\label{manso}
\begin{split}
w_1(z_1,z_2,z_3)&={\rm exp}\left(\sin(z_1)\sin(z_2)\sin(z_3)\right),\\
w_2(z_1,z_2,z_3)&={\rm exp}\left(\sin(\sqrt{2}z_1)\sin(\sqrt{2}z_2)\sin(\sqrt{2}z_3)\right),\\
    w_3(z_1,z_2,z_3)&=0.
\end{split}
\end{equation}
The electric permittivity is $\varepsilon=1$, which represents the vacuum case, and $\kappa=100$. The projection matrix is defined as
\begin{equation}\label{prom0}
    \mathbf{P}=\begin{bmatrix}
        1 & 0 & 0 & \sqrt{2} & 0 & 0\\
        0 & 1 & 0 & 0 & \sqrt{2} & 0\\
        0 & 0 & 1 & 0 & 0 & \sqrt{2}\\
    \end{bmatrix}.
\end{equation}
Accordingly, the source term $\bm g$ in Eq. \eqref{ccp} is chosen such that the analytic expression Eq. \eqref{manso} satisfies Eq. \eqref{ccp}.

In Figure \ref{fig5}, we present the discrete $ L^2 $-error and $ L^{\infty} $-error on a semi-logarithmic scale as functions of various $ N $ using the proposed DF-PM within the domain $[-10, 10]^3$. The results demonstrate the expected exponential convergence of the proposed method under both $ L^2 $-norm and $ L^{\infty} $-norm as $ N $ increases. Consequently, the DF-PM serves as an accurate method for addressing the quasiperiodic source problem, in agreement with the discussions in Theorem \ref{sourceea}.

\begin{figure}[h]
	\centering
	\includegraphics[width=0.9\textwidth]{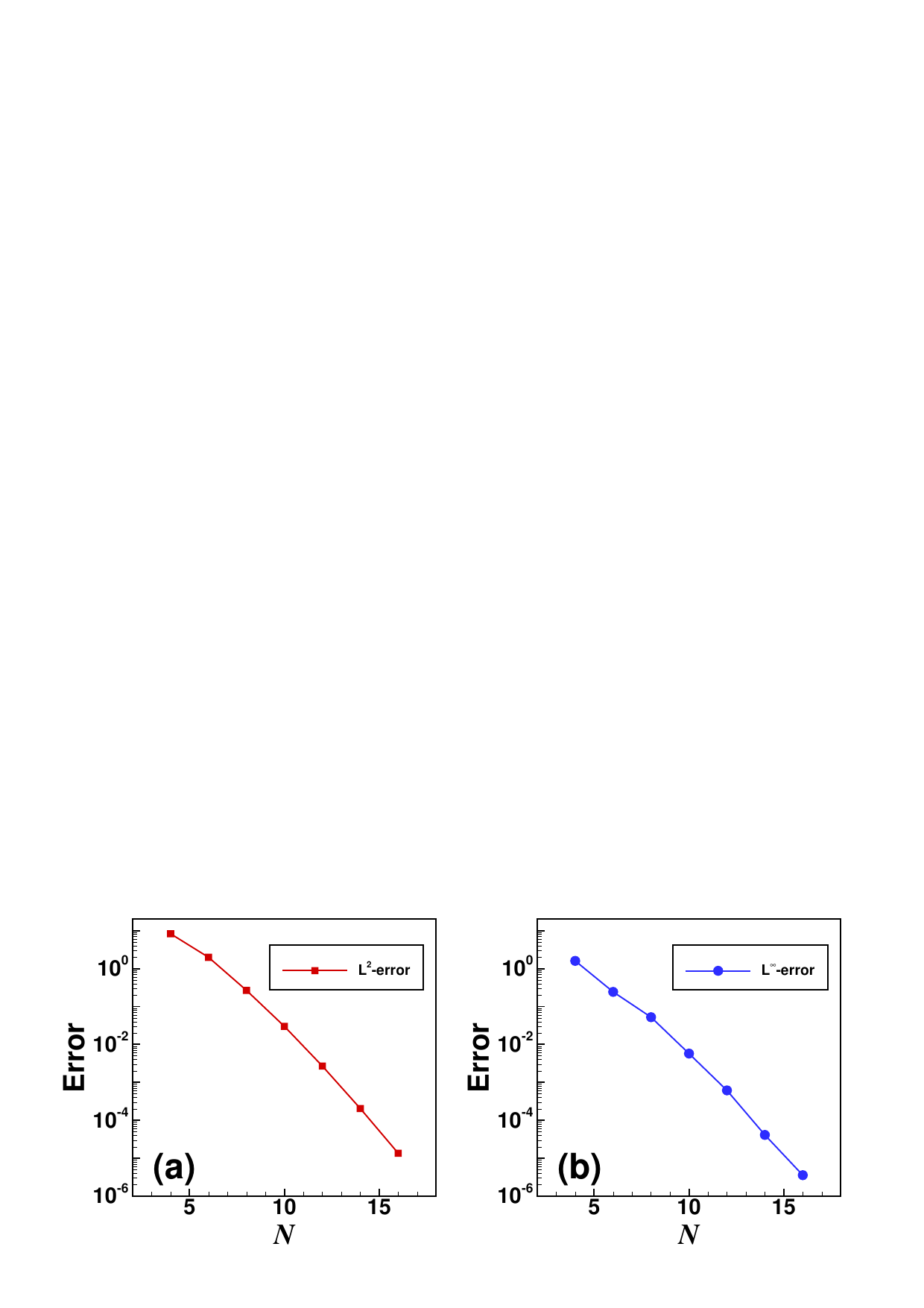}
	\caption{Convergence tests of the curl-curl problem with $\varepsilon=1$. (a) $L^2$-error versus $N$ with $\kappa=100$. (b) $L^{\infty}$-error versus $N$ with $\kappa=100$.}\label{fig5}
\end{figure}

\subsection{Example 2}

We then consider Eq. \eqref{ccp} with a given $\bm g$ with the form $\bm g=\nabla\times\bm w$, where $\bm w$ is defined as Eq. \eqref{manso}. The electric permittivity is
\begin{equation}\label{eps0}
    \varepsilon(z_1,z_2,z_3)=3+\cos(z_1)\cos(z_2)\cos(z_3)+\cos(\sqrt{2}z_1)\cos(\sqrt{2}z_2)\cos(\sqrt{2}z_3),
\end{equation}
and the projection matrix is the same as Eq. \eqref{prom0}. The ``exact" results are determined using numerical results obtained from the DF-PM when $N=16$. The results are shown in Figure \ref{fig6}, and the test area is also $[-10,10]^3$. We observed that both the infinity norm and the 2-norm exhibit exponential decay with increasing $N$, which is consistent with the results of the theoretical error analysis. This means that a smaller $N$ can achieve high accuracy, allowing the three-dimensional curl-curl problem to be computed with fewer degrees of freedom.

\begin{figure}[h]
	\centering
	\includegraphics[width=0.9\textwidth]{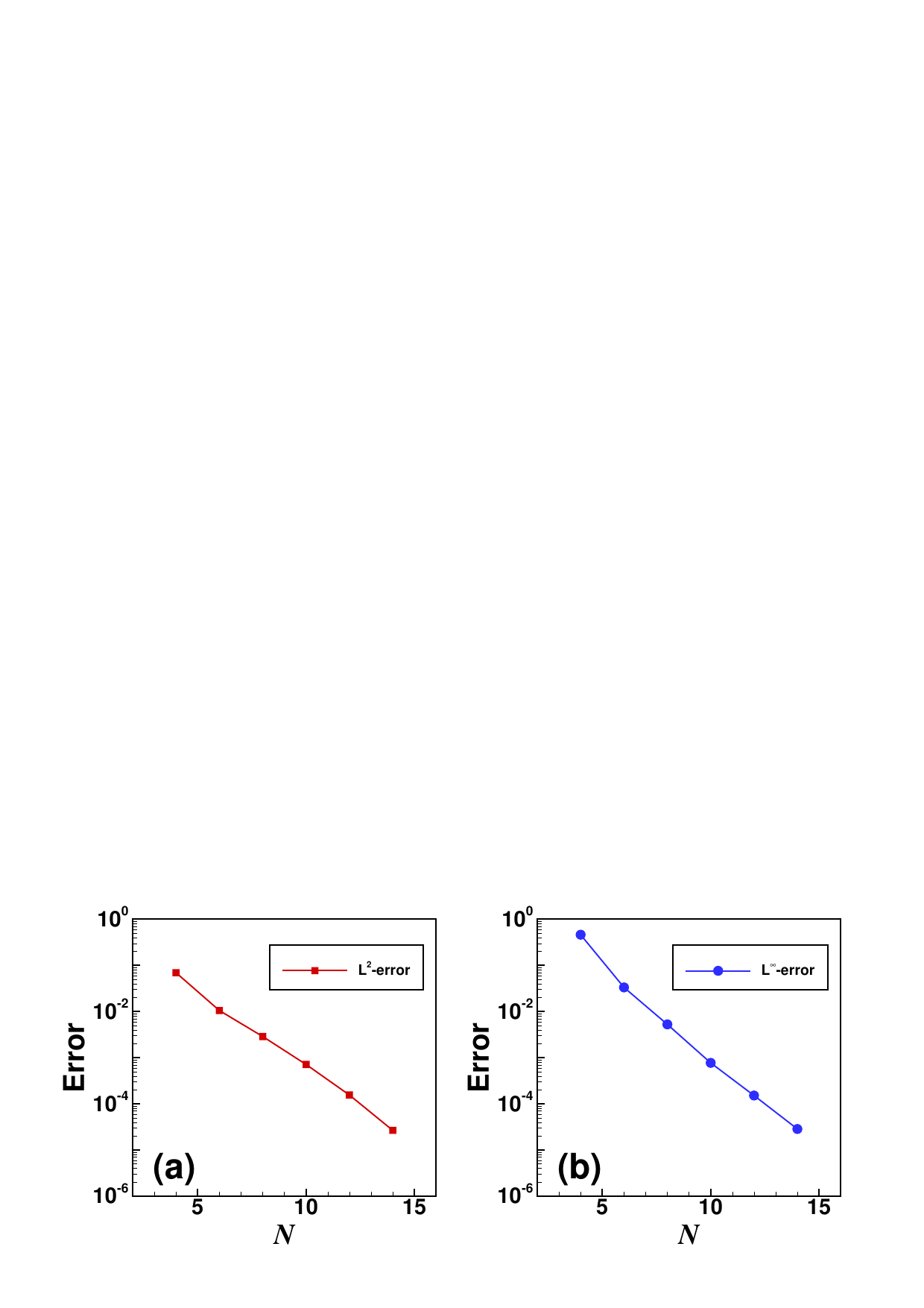}
	\caption{Convergence tests of the curl-curl problem with $\varepsilon$ defined in Eq. \eqref{eps0}. (a) $L^2$-error versus $N$ with $\kappa=1,10,100,1000,10000$. (b) $L^{\infty}$-error versus $N$ with $\kappa=1,10,100,1000,10000$.}\label{fig6}
\end{figure}

\subsection{Example 3}
Then we turn to the quasiperiodic Maxwell eigenvalue problems. We first consider the vacuum case with $\varepsilon=1$. The solution $\bm u$ is an aperiodic vector function with the projection matrix
\begin{equation}\label{prom}
    \mathbf{P}=\begin{bmatrix}
        1 & 0 & 0 & \sqrt{5} & 0 & 0\\
        0 & 1 & 0 & 0 & \sqrt{5} & 0\\
        0 & 0 & 1 & 0 & 0 & \sqrt{5}\\
    \end{bmatrix}.
\end{equation}
The precise Maxwell eigenvalues are determined by $\lambda = p^2 + q^2 + r^2$, where $p, q, r \in \mathbb{Z}[\sqrt{5}]$ and $p^2 + q^2 + r^2 \neq 0$. Consequently, these eigenvalues (spectrum) exhibit density within $\mathbb{R}^+$. We test the first 200 numerical eigenvalues under $N=8,M=6$. As shown in Table \ref{tb022}, all numerical eigenvalues of the DF-RPM have their corresponding real spectrum, which implies that the spurious modes do not exist. The multiplicity of numerical eigenvalues corresponds to the density of states.

\begin{table}[htbp]
	\centering
	\fontsize{10}{10}\selectfont
	\begin{threeparttable}
		\caption{Numerical eigenvalues and their corresponding real spectrum under $N=8,M=6$.} 
		\label{tb022}
  \setlength{\tabcolsep}{8mm}
		\begin{tabular}{ccc}
			\toprule
			\midrule 
                  Numerical eigenvalue & Real spectrum & ID \cr
			\midrule
                 0.0557281 & $(\sqrt{5}-2)^2$ & 1-12 \\
                 0.1114562 & $(\sqrt{5}-2)^2+(\sqrt{5}-2)^2$ & 13-36 \\
                 0.1671843 & $(\sqrt{5}-2)^2+(\sqrt{5}-2)^2+(\sqrt{5}-2)^2$ & 37-52 \\
                0.2229124 & $(2\sqrt{5}-4)^2$ & 53-58\\
                0.2786405 & $(2\sqrt{5}-5)^2$ & 59-82\\
                 0.3343686 & $(2\sqrt{5}-4)^2+(\sqrt{5}-2)^2$ & 83-106\\
                0.4458247 & $(2\sqrt{5}-4)^2+(2\sqrt{5}-4)^2$& 107-112 \\
                 0.5015528 & $(2\sqrt{5}-4)^2+(2\sqrt{5}-4)^2+(\sqrt{5}-2)^2$& 113-124 \\
                 0.5835921 & $(\sqrt{5}-3)^2$ & 125-136 \\
                 0.6393202 & $(\sqrt{5}-3)^2+(\sqrt{5}-2)^2$ & 137-160\\
                 0.6687371 & $(2\sqrt{5}-4)^2+(2\sqrt{5}-4)^2+(2\sqrt{5}-4)^2$ & 161-162 \\
                 0.6950483 & $(\sqrt{5}-3)^2+(\sqrt{5}-2)^2+(\sqrt{5}-2)^2$ & 163-185\\
                0.8065045 & $(\sqrt{5}-3)^2+(2\sqrt{5}-4)^2$ & 185-200+ \\
			\midrule
			\bottomrule
		\end{tabular}
	\end{threeparttable}
\end{table}

In addition, as it is shown in Figure \ref{fig2}, the absolute errors of the first 200 numerical eigenvalues are consistently below $10^{-12}$ in both cases, even under a small $N$ and $M$, affirming the method's correctness and high accuracy.

\begin{figure}[t!]
	\centering
	\includegraphics[width=0.9\textwidth]{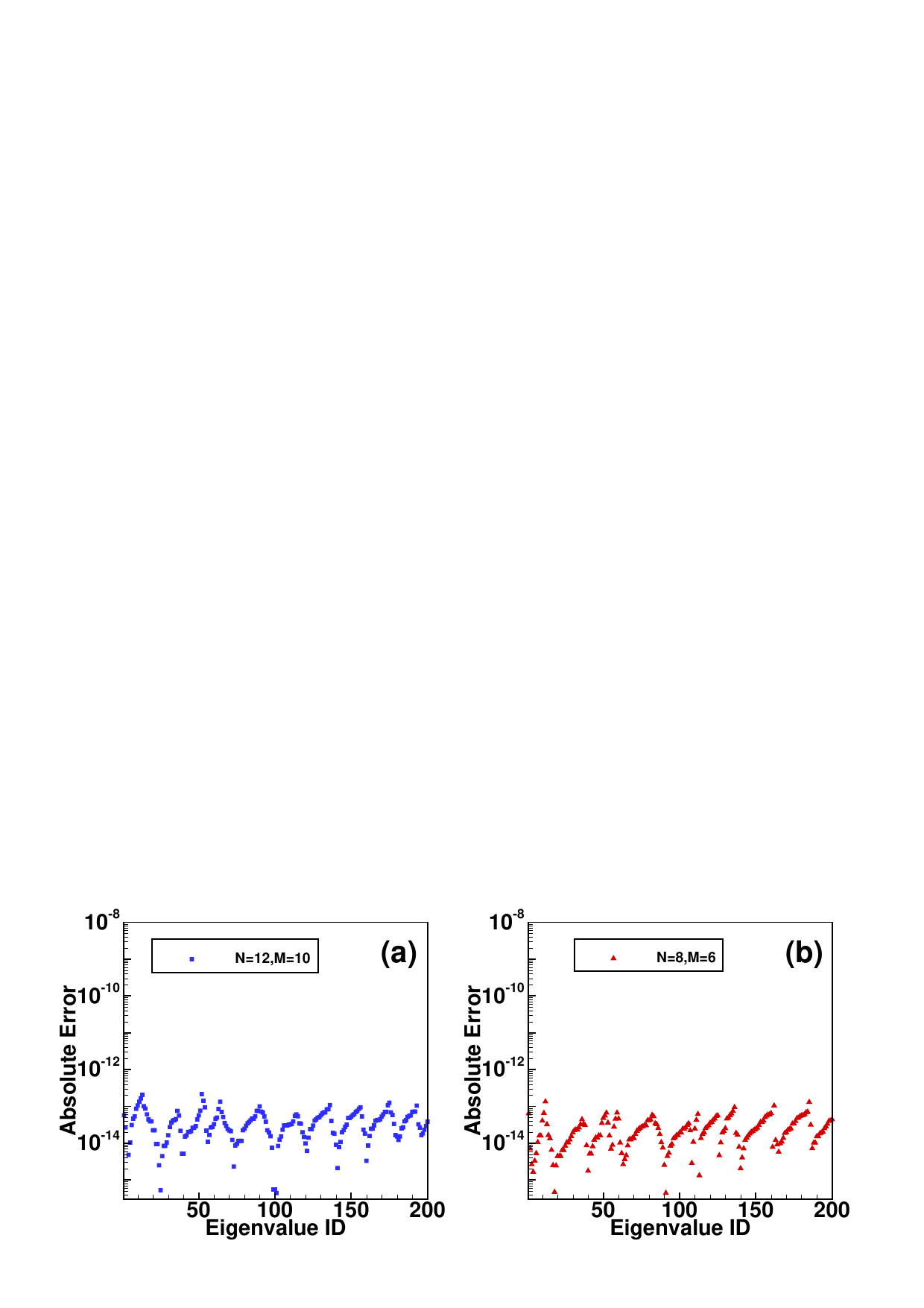}
	\caption{Absolute error versus the eigenvalue ID under (a) $N=12,M=10$ and (b) $N=8,M=6$.}\label{fig2}
\end{figure}

\subsection{Example 4}
We then consider another more complex aperiodic case with 
\begin{equation}
    \varepsilon(\bm z)=1/[10+\cos(z_1)+\cos(z_2)+\cos(z_3)+\cos(\sqrt{5}z_1)+\cos(\sqrt{5}z_2)+\cos(\sqrt{5}z_3)]
\end{equation}
and the projection matrix as Eq. \eqref{prom}. 

To showcase the exceptional accuracy and rapid convergence of the DF-PM and DF-RPM approach, we present errors in Table \ref{tb0} and Figure \ref{fig331}. The ``exact" results are determined using numerical results obtained from the DF-RPM when $N=14$ and $M=10$. The specturm errors, denoted as $\varepsilon_1$ and $\varepsilon_2$, are measured by the maximum error of two spectrum $0.0540$ and $1.0886$. We first consider the DF-PM. To prove the convergence of the DF-PM, $\varepsilon_1$ and $\varepsilon_2$ are included in Table \ref{tb0} to verify the convergence rate. It can be easily found that the error decays rapidly with an increasing $N$, clearly illustrating the correctness of the DF-PM.

\begin{table}[htbp]
	\centering
	\fontsize{10}{10}\selectfont
	\begin{threeparttable}
		\caption{$\varepsilon_1$ and $\varepsilon_2$ of the DF-PM for different $N$ in Example 2} 
		\label{tb0}
  \setlength{\tabcolsep}{12mm}
		\begin{tabular}{ccc}
			\toprule
			\midrule 
                  $N$ & $\varepsilon_1$ & $\varepsilon_2$ \cr
			\midrule
                4 & $1.08\times10^{-4}$    & $1.02\times10^{-3}$  \cr
   			6 & $6.89\times10^{-5}$    & $8.13\times10^{-6}$  \cr
      		8 &  $6.58\times10^{-8}$  & $6.96\times10^{-7}$  \cr
         	10 &  $2.74\times10^{-10}$   & $1.13\times10^{-8}$  \cr
                12 &  $6.21\times10^{-12}$ & $4.50\times10^{-9}$ \cr
			\midrule
			\bottomrule
		\end{tabular}
	\end{threeparttable}
\end{table}

\begin{figure}[h]
	\centering
	\includegraphics[width=0.9\textwidth]{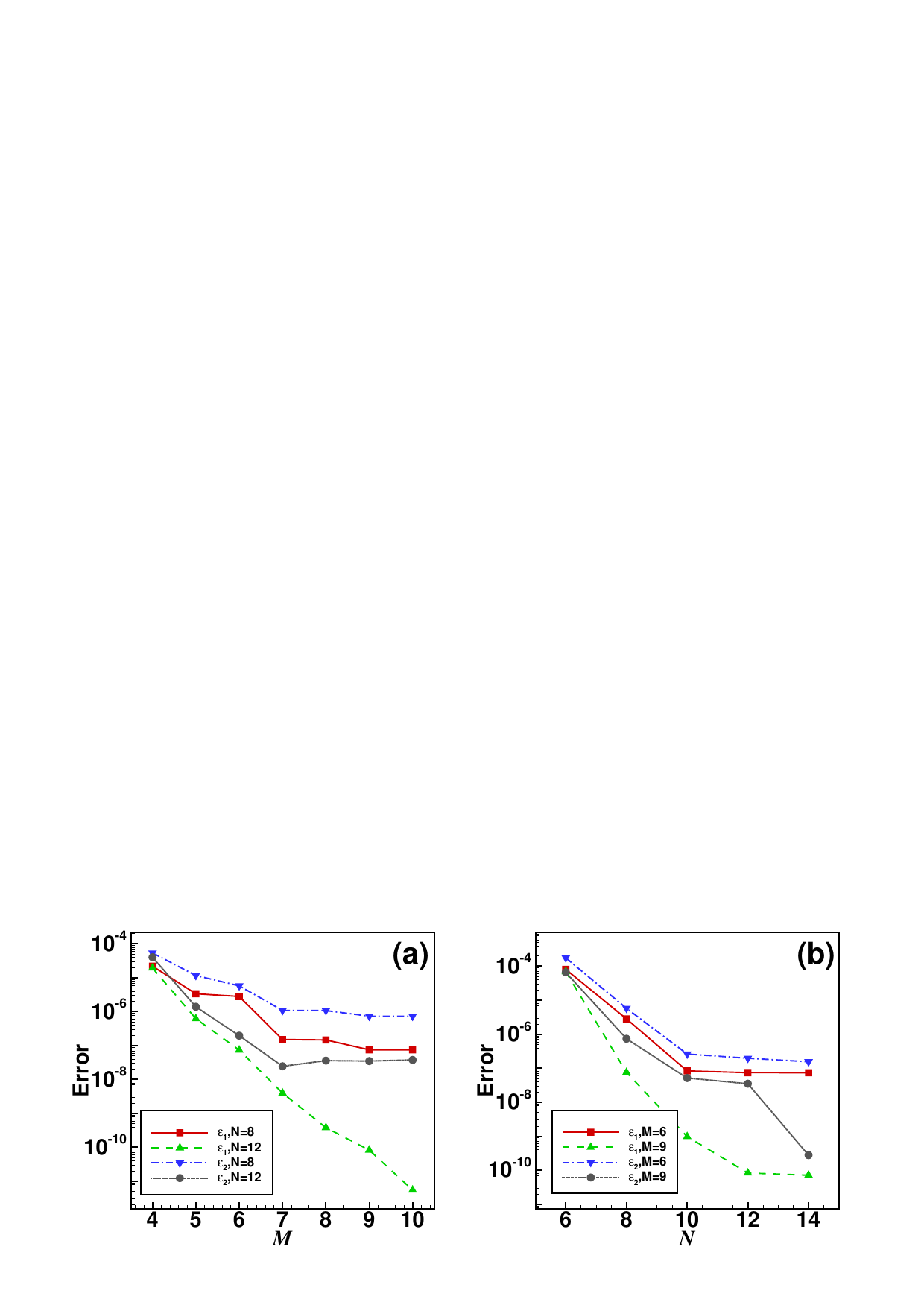}
	\caption{Spectrum error $\varepsilon_1$ and $\varepsilon_2$. (a): Error as function of $N$ for different $M$. (b): Error as function of $M$ for different $N$.}\label{fig331}
\end{figure}

Then we focus on the reduction part. Figure \ref{fig331} illustrates convergence as $N$ increases for $M=6$ and $9$, and as $M$ increases for $N=8$ and $12$, characterized by $\varepsilon_1$ and $\varepsilon_2$. Both panels demonstrate that $\varepsilon_1$ and $\varepsilon_2$ experience an exponential decrease as both $N$ and $M$ increase, ultimately reaching a stable value. Notably as shown in panel (a), the magnitude of this stable value diminishes with an increase in $M$. Even for relatively small values of $N$ and $M$ (e.g., $N=8$ and $M=6$), $\varepsilon_1$ and $\varepsilon_2$ are already smaller than $10^{-5}$, showcasing the high accuracy of the DF-RPM. Moreover, for small $M$ ($M\leq7$), the error curves of $N=8$ and $12$ decay exponentially, indicating that the error primarily stems from the basis reduction. Conversely, for larger $M$, the error curves of the two cases show significant differences, suggesting that the error is mainly attributed to the PM part. In panel (b), with an increasing $N$, both $\varepsilon_1$ and $\varepsilon_2$ decay exponentially, ultimately reaching a stable value. These numerical results are consistent with the theoretical analysis. In summary, the results exhibit high accuracy despite a substantial reduction in the number of bases.

We next conduct the study on the DOF required in the eigenvalue solver for the DF-RPM for $N=6, 8,10,12$ and $14$ with $M$ increasing from $4$ to $10$, and the results are summarized in Table \ref{tb2}. The DOF exhibits a quadratic decrease with respect to $M$. Theoretically, the DF-RPM of 3D systems has complexity $O(M^3)$ for given $N$,  and $O(N^6)$ for given $M$, while the complexity of the original DF-PM is $O(N^{12})$. The numerical results of Table \ref{tb2} are in agreement with these theoretical analysis. It also can be found that a small $M$ is able to reach high accuracy. {For $N=10$, the use of $M=8$ achieves an error level of $10^{-9}$. In this case, the DOF is $4.9$ times smaller than that of the DF-PM.} The reduction for larger $N$ will be more significant. {When $N=14$, the DOF reduction with $M=6$ becomes $17.2$ times, comparing the DF-RPM with the DF-PM, which also leads to a significant reduction in the CPU time.} It is noteworthy that for the DF-PM, even a modest $N$, such as $N=12$, results in a large DOF of $5971968$, rendering its memory and CPU time consumption unacceptable.

\begin{table}[htbp]
	\centering
	\fontsize{10}{10}\selectfont
	\begin{threeparttable}
		\caption{The DOF of the DF-RPM for different $N$ and $D$ in Example 2} 
		\label{tb2}
  \setlength{\tabcolsep}{6mm}
		\begin{tabular}{ccrrrrr}
			\toprule
			\midrule 
                  & $M$ & $N=6$ & $N=8$ & $N=10$ & $N=12$ & $N=14$\cr
			\midrule
             DF-PM & -&  93310 & 524286  &  1999998 & 5971966  &  15059070 \cr
			\midrule
			\multirow{7}{*}{DF-RPM} & 10 &  93310 & 431998  &  1143572 & 2249726  & 3813246  \cr
			& 9 &  85748 &  370384 &  913064 & 1714748  &  2809854 \cr
   			& 8 &  78606 &  297752 &  685998 & 1228248  &  1999998 \cr
      		& 7 &  65534 &  221182 &  476654 &  843748 & 1362942  \cr
         	& 6 & 48776  &  148174 &  314926 &  549248 &  877950 \cr
                & 5 &  31248 &  85748 &  194670 &  332748 &  524286 \cr
                & 4 & 15998  & 48776  & 101304  & 182248  &  281214 \cr
                \midrule
			\bottomrule
		\end{tabular}
	\end{threeparttable}
\end{table}

\subsection{Example 5}
In this example, we explore an aperiodic photonic system known as a moiré lattice \cite{wang2020localization, gao2023pythagoras}. Photonic moiré lattices arise from the superposition of two rotated square or hexagonal sublattices \cite{wang2020localization,huang2016localization}. Researchers are drawn to this system due to its intriguing electronic, optical, and magnetic properties, as well as its potential for designing novel materials with tailored functionalities \cite{andrei2021marvels,du2023moire}. In this system, the electric permittivity function is defined as \cite{efremidis2002discrete}:
\begin{equation}
    \varepsilon(\bm z)=(n_0+10^{-4}f(\bm z))^2,
\end{equation}
where $n_0=2.28$, $f(\bm z)=p_1v(\bm z)+p_2v(\mathbf{S}\bm z)$, $v(\bm z)=\cos(z_1)+\cos(z_2)+\cos(z_3)$ and the rotation matrix
\begin{equation*}
    \mathbf{S}=\begin{bmatrix}
        \cos\alpha\cos\gamma-\sin\alpha\cos\beta\sin\gamma & -\cos\alpha\sin\gamma-\sin\alpha\cos\beta\cos\gamma & \sin\alpha\sin\beta\\
        \sin\alpha\cos\gamma+\cos\alpha\cos\beta\sin\gamma & -\sin\alpha\sin\gamma+\cos\alpha\cos\beta\cos\gamma & -\cos\alpha\sin\beta\\
        \sin\beta\sin\gamma & \sin\beta\cos\gamma & \cos \beta
    \end{bmatrix}.
\end{equation*}
Here $(\alpha,\beta,\gamma)$ is the Euler angle between two sublattices. To the moir\'e lattices composed of two square lattices as considered here, they are periodic when $(\alpha,\beta,\gamma)$ all take Pythagorean angles and aperiodic otherwise \cite{huang2016localization}. Hence the projection matrix is 

\begin{equation*}
    \mathbf{P}=\begin{bmatrix}
        1  & 0 & 0 & \cos\alpha\cos\gamma-\sin\alpha\cos\beta\sin\gamma & \sin\alpha\cos\gamma+\cos\alpha\cos\beta\sin\gamma & \sin\beta\sin\gamma\\
        0 & 1 & 0 & -\cos\alpha\sin\gamma-\sin\alpha\cos\beta\cos\gamma & -\sin\alpha\sin\gamma+\cos\alpha\cos\beta\cos\gamma & \sin\beta\cos\gamma \\
        0 & 0 & 1 & \sin\alpha\sin\beta & -\cos\alpha\sin\beta & \cos \beta\\
    \end{bmatrix}.
\end{equation*}
 Throughout this example, the sublattices of the moir\'e systems have fixed parameter $p_1=p_2=1/6$ and $(\alpha,\beta,\gamma)=(\pi/6,\pi/6,\pi/6)$. We first show the eigenstates calculated by the DF-RPM. In Figure \ref{fig3-2}, we show one of the first normalized eigenstates, $\bm u=(u_1,u_2,u_3)$, under $N=10,M=6$. The field distribution and its specific iso-surfaces of the aperiodic moir\'e lattice with a rotation angle $(\pi/6,\pi/6,\pi/6)$ are presented.

\begin{figure}[h]
	\centering
	\includegraphics[width=1.00\textwidth]{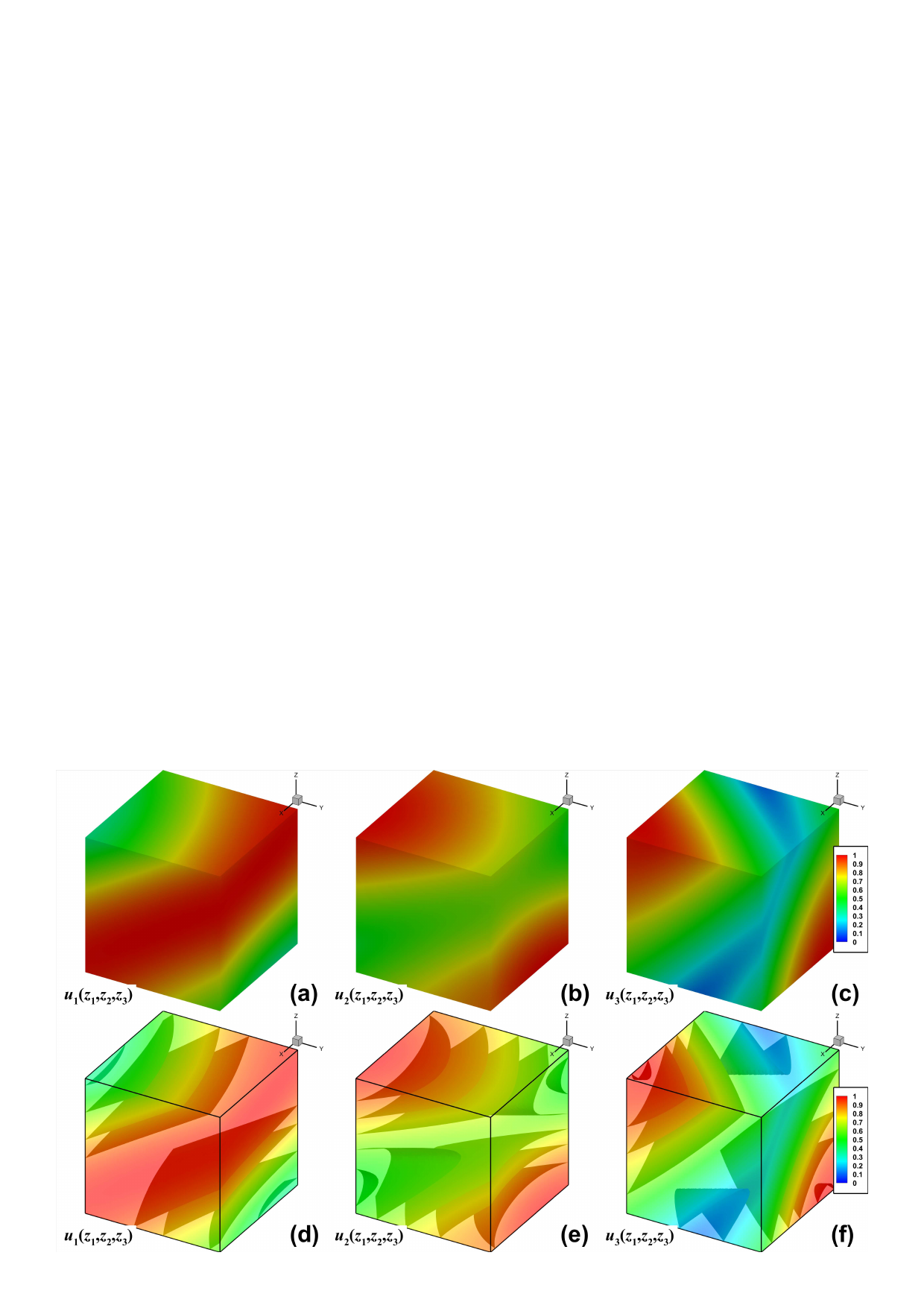}
	\caption{The normalized first eigenfunction $\bm u=(u_1,u_2,u_3)$ in three dimensions by the DF-RPM. $N=10,M=6$ is taken and the results in area $[-10,10]^2$. (a-c) the surface plot of the amplitude of $\bm u$. (d-f) the iso-surfaces.}\label{fig3-2}
\end{figure}

We proceeded to analyze the accuracy and convergence of the DF-RPM, and the results are displayed in Figure \ref{fig114}. In our computations, the "exact" results are obtained from the numerical results of the DF-RPM with $N=14$ and $M=8$. The absolute error of the first eigenvalue, denoted as $\varepsilon$, is approximately $0.0009$. Panel (a) presents the convergence with the increase of $M$ for $N=8$ and $12$. Exponential decay with $D$ is evident at the beginning, consistent with the previous error analysis. With an increasing $M$, the two curves differ significantly, indicating that the error at this point mainly arises from the DF-PM part. Overall, the accuracy with small $M$ (e.g., $D=4$) is sufficiently high to provide accurate solutions. These findings affirm that a significant reduction in basis functions can still maintain high accuracy.

Panel (b) illustrates the convergence of the numerical solution with the increase of $N$ for truncation coefficients $M=5, 6$ and $7$. $\varepsilon$ exhibits exponential decay with increasing $N$, ultimately converging to a fixed value dependent on $M$. Similar to Example 2, small values of $M$ yield highly accurate results. For instance, with $M=7$ and a slightly larger $N$, the RPM achieves accuracy at the level of $10^{-7}$ in the eigenvalue calculation.

\begin{figure}[h]
	\centering
	\includegraphics[width=0.8\textwidth]{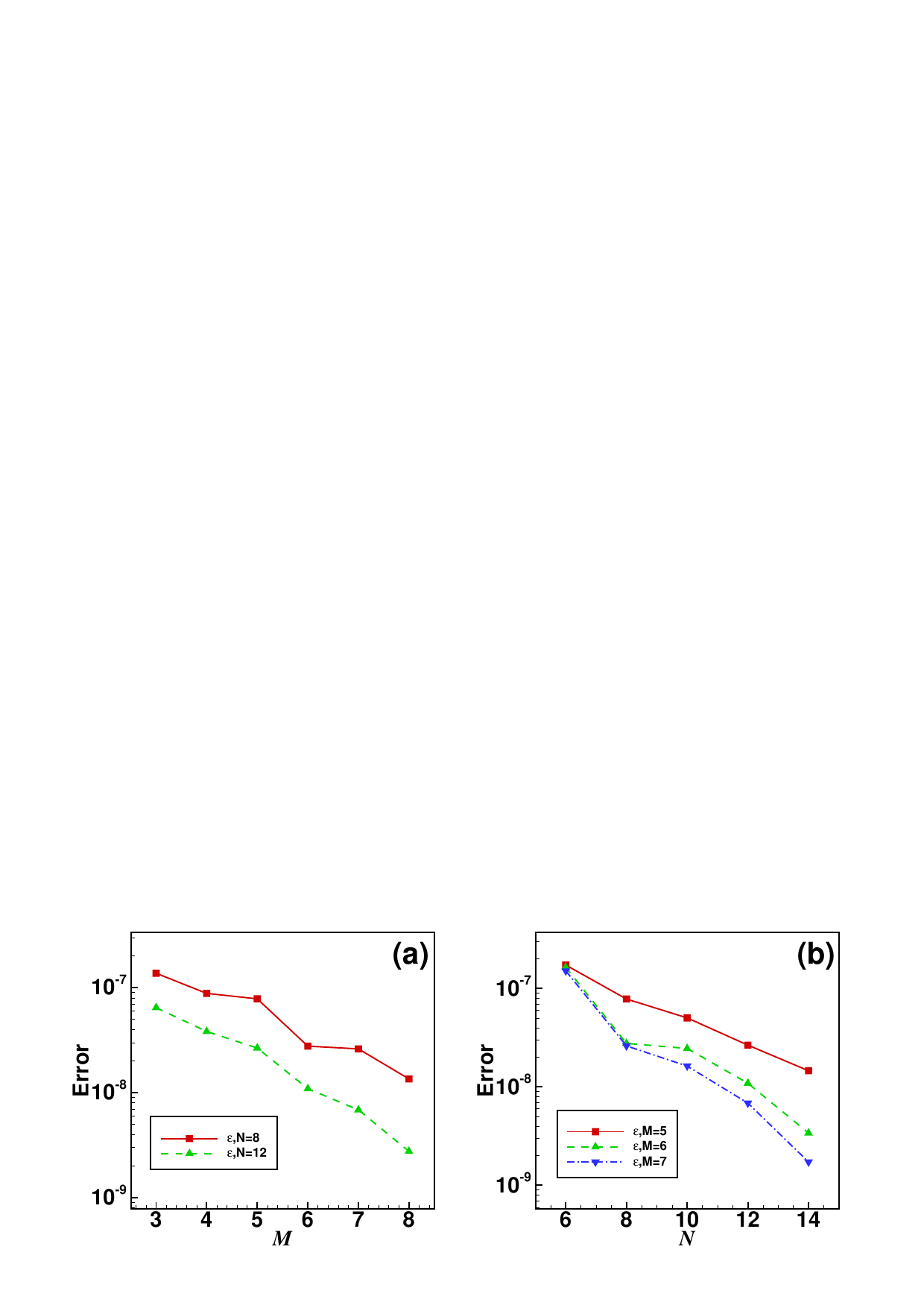}
	\caption{Errors of the first eigenvalue $\varepsilon$. (a): Error as function of $N$ for different $M$. (b): Error as function of $M$ for different $N$.}\label{fig114}
\end{figure}

\begin{table}[htbp]
	\centering
	\fontsize{10}{10}\selectfont
	\begin{threeparttable}
		\caption{The DOF of the DF-RPM for different $N$ and $M$ in Example 3} 
		\label{tb3}
  \setlength{\tabcolsep}{6mm}
		\begin{tabular}{ccrrrrr}
			\toprule
			\midrule 
                  & $M$ & $N=6$ & $N=8$ & $N=10$ & $N=12$ & $N=14$\cr
                  DF-PM & - & 93310  & 524286  &  1999998 & 5971966  &  15059070 \cr
			\midrule
			\midrule
   			\multirow{6}{*}{DF-RPM} & 8 & 93310 &  508158 & 1708314  & 4145462 &  8192876 \cr
      		& 7 & 92806 & 475870 & 1471966 & 3351376  &  6352886\cr
         	& 6 & 89710 & 414400 & 1167040  & 2505840  & 4574956\cr
                & 5 & 79990 &  322024 & 831154  & 1693200 &  2986880\cr
                & 4 & 61114 & 213768 & 510708  & 992940 &   1698188\cr
                & 3 & 36552 & 112398 & 251006  & 468406  &  781234\cr
                \midrule
			\bottomrule
		\end{tabular}
	\end{threeparttable}
\end{table}

Here we also display the DOF in the eigenvalue solver of the DF-RPM for $N=8,10,12$ and $14$ with $M$ increasing from $3$ to $8$. The results are similar with that of Example 2, which are summarized in Table \ref{tb3}. The numerical results of Table \ref{tb3} are in agreement with the theoretical analysis. It also can be found that a small $M$ is able to reach high accuracy. For $N=14$, the use of $M=3$ achieves an error level of {$5\times10^{-8}$}. In this case, the DOF is $19.3$ times smaller than that of the DF-PM, clearly illustrating the higher efficiency of our proposed method.

\section{Conclusion}\label{s6}
We introduced projection methods based on the divergence-free basis as a precise and efficient approach for quasicrystals in three dimensions. By establishing the de Rham complex, we constructed a point-wise divergence-free quasiperiodic Fourier spectral basis, then proposed an accurate and efficient projection scheme for the quasiperiodic source problem. Thanks to the observation of the decay rate of the divergence-free Fourier coefficients of eigenfunctions, we reduced the basis space and proposed a reduced scheme for the quasiperiodic Maxwell eigenvalue problem. The theoretical perspective is supported by an error bound for the approximation, highlighting the method's high accuracy. Numerical results in 3D problems underscore the algorithm's efficiency and accuracy, presenting appealing characteristics for diverse practical applications.

\section*{Acknowledge}
Z. G. and Z. X. are supported by the National Natural Science Foundation of China (NNSFC)(grants No. 12325113 and 12071288), Science and Technology Commission of Shanghai Municipality (grant No. 21JC1403700), and the HPC center of Shanghai Jiao Tong University. Z. Y. is supported by the NNSFC (No. 12101399) and the Shanghai Sailing Program (No. 21YF1421000).

\end{document}